\newtheorem{thm}{Theorem}[section]
\newtheorem{defn}{Definition}[section]
\newtheorem{rem}{Remark}[section]
\newcommand{\R}{\mathbb{R}}
\newcommand{\e}{\textup{e}}\renewcommand{\i}{\textup{i}}\newcommand{\bff}{\mathbf f}\newcommand{\bfh}{\mathbf h}\newcommand{\bfu}{\mathbf u}\newcommand{\bfy}{\mathbf y}\newcommand{\bfzero}{\mathbf 0}\newcommand{\bfuno}{\mathbf 1}
\newcommand{\bfnn}{{\boldsymbol n}}\newcommand{\bfff}{{\boldsymbol f}}\newcommand{\bfii}{{\boldsymbol i}}\newcommand{\bfjj}{{\boldsymbol j}}\newcommand{\bfkk}{{\boldsymbol k}}
\newcommand{\bftheta}{{\boldsymbol\theta}}
\DeclareMathSymbol{\shortminus}{\mathbin}{AMSa}{"39}
\providecommand{\keywords}[1]{\textit{Keywords } #1}
\begin{document}

\title{Can the a.c.s. notion and the GLT theory handle approximated PDEs/FDEs with either moving or unbounded domains?}

\author[1]{Andrea Adriani\thanks{andrea.adriani@uninsubria.it}}

\author[2]{Alec Jacopo Almo Schiavoni-Piazza\thanks{aschiavo@sissa.it}}

\author[1,3]{Stefano Serra-Capizzano\thanks{s.serracapizzano@uninsubria.it, stefano.serra@it.uu.se}}

\author[4]{Cristina Tablino-Possio\thanks{cristina.tablinopossio@unimib.it}}

\affil[1]{Department of Science and High Technology, Division of Mathematics,  University of Insubria,  Via Valleggio 11, 22100 Como, Italy}

\affil[2]{Scuola Internazionale Superiore di Studi Avanzati, Via Bonomea 265, 34136 Trieste, Italy}

\affil[3]{Department of Information Technology, Division of Scientific Computing, Uppsala University, Box 337, SE-751 05,  Uppsala, Sweden}

\affil[4]{Department of Mathematics and Applications, University of Milano - Bicocca, via Cozzi, 53,  20125  Milano, Italy}

\date{}

\maketitle

\section*{Abstract}

In the current note we consider matrix-sequences $\{B_{n,t}\}_n$ of increasing sizes depending on $n$ and equipped with a parameter $t>0$. For every fixed $t>0$, we assume that each $\{B_{n,t}\}_n$ possesses a canonical spectral/singular values symbol $f_t$ defined on $D_t\subset \R^{d}$ of finite measure, $d\ge 1$. Furthermore, we assume that $ \{ \{ B_{n,t}\} : \, t > 0 \} $ is an approximating class of sequences (a.c.s.) for $ \{ A_n \} $ and that $ \bigcup_{t > 0} D_t = D $ with $ D_{t + 1} \supset D_t $. Under such assumptions and via the notion of a.c.s, we prove results on the canonical distributions of $ \{ A_n \} $, whose symbol, when it exists, can be defined on the, possibly unbounded, domain $D$ of finite or even infinite measure.

We then extend the concept of a.c.s. to the case where the approximating sequence $ \{ B_{n,t}\}_n $ has possibly a different dimension than the one of $ \{ A_n\} $. This concept seems to be particularly natural when dealing, e.g., with the approximation both of a partial differential equation (PDE) and of its (possibly unbounded, or moving) domain $D$, using an exhausting sequence of domains $\{ D_t \}$.


Examples coming from approximated PDEs/FDEs with either moving or unbounded domains are presented in connection with the classical and the new notion of a.c.s., while numerical tests and a list of open questions conclude the present work.

\ \\
\ \\
\medskip

\keywords{Discretization of PDEs and FDEs, Unbounded domains, Spectral distribution of matrix-sequences, Approximating class of sequences, GLT theory.}
\section{Introduction}

Partial Differential Equations (PDEs) and more recently Fractional Differential Equations (FDEs) represent standard tools employed for modeling real-world problems in Applied Sciences and Engineering. In particular, the notion of FDE can be considered as a generalization of that of PDE, in which fractional order derivatives are used describing anomalous diffusion processes stemming from concrete applications. The price to pay is the nonlocal nature of the underlying operators, which implies the dense character of the approximated equations and hence a higher computational cost.

In general, when considering either PDEs or FDEs, analytical solutions are not generally known in close form and when they are known via proper representation formulae, it happens that the related computation is costly. Indeed, while the analysis plays a fundamental role in establishing the well-posedness of a given PDE/FDE, numerical methods are crucial for an efficient computation of a numerical solution approximating the solution to the infinite dimensional problem within a certain error.

When using a linear numerical method and the given PDE/FDE is of linear type $Lu=g$ on some domain $\Omega\subset \mathbb{R}^\nu$, $\nu \ge 1$, we end up with a usually large linear system
\begin{equation}\label{basic-discrete}
A_n\bfu_n=\mathbf g_n,
\end{equation}
where $\mathbf g_n$ incorporate the approximation of the known term and the given boundary conditions and $A_n$ is a square matrix of size $d_n$, $d_k< d_{k+1}$, $k\in \mathbb N$. In this way, as $n$ tends to infinity, i.e. the matrix-size $d_n$ tends to infinity, the approximated solution  $\bfu_n$ converges to the solution of the continuous problem $Lu=g$ in a given topology, depending on the continuous problem and on the given numerical method.

Looking at (\ref{basic-discrete}) collectively for every $n$, we observe that we are considering a whole sequence of linear systems with increasing matrix-size $d_n$. Hence it becomes useful to study the collective behavior of the sequence $\{ A_n \}_n$ of coefficient matrices.


What is often observed in practice is that the sequence of matrices $\{ A_n \}_n$ enjoys an asymptotic spectral distribution, which is somehow connected to the spectrum of the linear differential operator $L$ associated either with the given PDE or with the given FDE. More in detail, for a large set of test functions $F$, usually for all continuous functions with bounded support or just continuous functions if the spectral norm of $A_n$ is uniformly bounded by a constant independent of $n$, a weak-$*$ convergence stands. More specifically, for every test function $F$, we have
\begin{equation}\label{symbol-relation}
\lim_{n\to\infty}\frac1{d_n}\sum_{j=1}^{d_n}F(\lambda_j(A_n))=\frac1{\mu_m(D)}\int_D
\frac{\sum_{i=1}^p F(\lambda_i(\bff(\bfy)))}p\,{\rm d}\bfy,
\end{equation}
where $\lambda_j(A_n)$, $j=1,\ldots,d_n$, are the eigenvalues of $A_n$, $ \mu_m(\cdot)$ is the Lebesgue measure in $\mathbb R^m$, and $\lambda_i(\bff(\bfy))$, $i=1,\ldots,p$, are the eigenvalues of a certain matrix-valued function
\begin{equation*}\label{symbol-form}
\bff:D\subset\mathbb R^m\to\mathbb C^{p\times p}
\end{equation*}
with $\mu_m(D)\in (0,\infty)$.
The function $\bff$ is referred to as the spectral symbol of the sequence of matrices $\{A_n\}$.

Informally speaking, relation \eqref{symbol-relation} says that, for $n$ large enough,
the spectrum of $A_n$ can be subdivided into $p$ different subsets (or ``branches'') of approximately the same cardinality $d_n/p$, and the $i$-th branch is approximately a uniform sampling over $D$ of the $i$-th eigenvalue function $\lambda_i(\bff(\bfy))$, $i=1,\ldots,p$. In particular, the number $r$ coincides with the number of ``branches'' that compose the spectrum of $A_n$. For instance, if $k=1$, $d_n=np$, and $D=[a,b]$, then up to few possible outliers the eigenvalues of $A_n$ are approximately equal to
\[ \lambda_i\Bigl(\bff\Bigl(a+j\,\frac{b-a}n\Bigr)\Bigr),\qquad j=1,\ldots,n,\qquad i=1,\ldots,p; \]
if $k=2$, $d_n=n^2p$, and $D=[a_1,b_1]\times[a_2,b_2]$, then again up to few possible outliers the eigenvalues of $A_n$ are approximately equal to
\[ \lambda_i\Bigl(\bff\Bigl(a_1+j_1\,\frac{b_1-a_1}n,\,a_2+j_2\,\frac{b_2-a_2}n\Bigr)\Bigr),\qquad j_1,j_2=1,\ldots,n,\qquad i=1,\ldots,p; \]
and so on in a $m$-dimensional setting, with $m>2$ integer.

A complementary notion is that of singular value symbol and singular value distribution. For this further notion, instead of (\ref{symbol-relation}), for every test function $F$, we have
\begin{equation*}\label{sv-symbol-relation}
\lim_{n\to\infty}\frac1{d_n}\sum_{j=1}^{d_n}F(\sigma_j(A_n))=\frac1{\mu_m(D)}\int_D
\frac{\sum_{i=1}^p F(\sigma_i(\bff(\bfy)))}p\,{\rm d}\bfy,
\end{equation*}
where $\sigma_j(A_n)$, $j=1,\ldots,d_n$, are the singular values of $A_n$, $\sigma_i(\bff(\bfy))$, $i=1,\ldots,p$, are the singular values of a certain matrix-valued function $\bff:D\subset\mathbb R^m\to\mathbb C^{p\times p}$ with $\mu_m(D)\in (0,\infty)$ and where the informal meaning exactly mirrors that described above for the spectral symbol.
The function $\bff$ is referred to as the singular value symbol of the sequence of matrices $\{A_n\}$.

It is then clear that the spectral symbol (singular value symbol) $\bff$ provides a ``compact'' and quite accurate description of the spectrum (singular values) of the discretization matrices $A_n$. The identification and the study of the symbol are consequently two important steps in the analysis of $A_n$ and, as a consequence, in the analysis and in the design of fast iterative solvers for the linear systems in (\ref{basic-discrete}), especially for large matrix-sizes $d_n$.

We remind that this type of eigenvalue distribution results are studied since the beginning of the last century in the context of the sequences of Toeplitz matrices generated by real-valued (or Hermitian-valued) functions, as the reader can check in the papers \cite{Tillinota,ty-1,tyrtL1} or in the books \cite{BS,glt-book-1,glt-book-2}, taking into account related references therein. Wide generalizations to the $*$-algebras of GLT matrix-sequences are considered in \cite{glt-book-1,glt-book-2,GLT-block1D,GLT-blockdD} with specific applications to the approximation via local numerical methods of (systems of) PDEs/FDEs also with nonsmooth variable coefficients and irregular but bounded domains/manifolds \cite{GLT-PDE-manifold,cmame1,mc-colloc,nobranches-FV,doro1,branches-DG,immersed NLAA-1,branches-IgA-k-l,applications-axioms,ratn1,ratn2}.

For giving a global picture, despite the great variety of numerical techniques, the parameters in formula (\ref{symbol-relation}) in the GLT analysis depend on a combination of the continuous PDE problem and of the approximation technique. For instance, the typical $D$ is $\Omega \times [-\pi,\pi]^\nu$, if $\Omega$ is a domain in $\mathbb{R}^\nu$ of positive and finite Lebesgue measure, where the PDE under consideration is defined; see the early Locally Toeplitz and GLT papers \cite{glt-laa,glt-Fourier,Tilliloc}. As a consequence, the typical $m$ equals $2\nu$ while, and this is natural, in the case of a submanifold of co-dimension $c$, we have $m=2(\nu -c)$, $1\le c\le \nu-1$. The parameter $p=s \alpha(d)$ ($d=\nu$ or $\nu-c$ in the submanifold setting), where $s$ is the size of the vector function $g$ in the continuous equation $Lu=g$: in the case of a vector PDE obviously we have $s>1$.  More interestingly $\alpha(d)$ depends very much on the approximation scheme, in general of local type, such as Finite Elements \cite{Cia}, Discontinuous Galerkin \cite{DG-book}, Finite Differences \cite{FD}, Finite Volumes \cite{FV-book}, Isogeometric Analysis \cite{IgA} etc. More precisely, in the case of Finite Differences \cite{glt-laa}, Finite Volumes \cite{nobranches-FV}, Isogometric Analysis of degree $k$ and maximal regularity $k-1$ \cite{mc-colloc}, Finite Elements of degree $1$ \cite{BSer}, we have $\alpha(d)=1$, while for Isogeometric Analysis of intermediate regularity $l$ with $l<k-1$, we observe $\alpha(d)=(k-l)^d$ \cite{branches-IgA-k-l}. In the case of Finite Elements of higher order $k$, we have $\alpha(d)=k^d$ \cite{branches-FEM,P_k-elements}, while when using Discontinuous Galerkin $\alpha(d)=(k+l)^d$ \cite{branches-DG}. Notice that the latter two formulas are a special instance of $\alpha(d)=(k-l)^d$, since $l=0$ in the Finite Elements of high order and $l=-1$ in the DG setting, because of the global discontinuity. We also remark that the exponential growth of the type $\alpha(d)=(k-l)^d$ is a drawback from a spectral viewpoint because only one branch of the spectrum is acoustic, that is related to the spectrum of the continuous operator, while the remaining very numerous branches are optical, in the sense that they behave like a pathology introduced by the numerical method (see e.g. \cite{IgA-vibrations} and references therein), and in addition they represent a challenge for designing fast iterative solvers, due to the very involved structure of the spectrum
(see the applications in \cite{GLT-block1D,GLT-blockdD,applications-axioms,glt-book-1,glt-book-2,branches-IgA-k-l}).

The paper is organized as follows. Sections \ref{sec:prel} and \ref{sec:Toeplitz} are devoted to introduce some notation and basic tools, such as the concept of spectral distribution, approximating class of sequences and Toeplitz matrix-sequences. Section \ref{sec:Main} contains the main approximation results. In particular, in Subsection \ref{ssec:gacs}, we introduce the new tool of generalized approximating class of sequences (g.a.c.s.) and we give an approximation result which highlights its usefulness. {In Section \ref{sec:applied-numerical} we give few application for constant coefficient PDEs, complemented by numerical experiments and related visualizations, which are then extended to the case of variable coefficients in Section \ref{sec:varcoeff}}. Finally, Section \ref{sec:end} is devoted to concluding remarks and to mention few open problems and future directions of research.

\section{Spectral distribution and Approximating class of sequences}\label{sec:prel}

In this section we introduce the tools of spectral distribution for a matrix sequence and of approximating class of sequences (a.c.s.), which is a fruitful concept in the theory of numerical approximation and asymptotic spectral analysis of matrix-sequences. We start with the definitions and then recall some useful results connecting the two concepts.

\begin{defn}\label{def:distributions}
Let $A_n\}_n$ be a matrix-sequence and let $f : D\to\mathbb C^{p_1\times p_2}$ be a measurable Hermitian matrix-valued function defined on the measurable (bounded) set $D\subset\mathbb R^m$, with $0<\mu_m(D)<\infty$. We write that $\{A_{n}\}_n$ is distributed as $f$ in the sense of singular values in $ D $ and we write $\{A_{n}\}_n \sim_\sigma (f,D)$, if
\begin{equation*}
\lim_{n \to \infty} \sum_{j=1}^{d_n \wedge d_n'} \frac{F(\sigma_j(A_n))}{d_n \wedge d_n'}  = \frac1{\mu(D)} \int_D \sum_{k=1}^p  \frac{F\left(\sigma_k(f(s))\right)}{p}ds,\qquad\forall F\in C_c(\mathbb{R}),
\end{equation*}
where $\sigma_1(f(s)), \ldots,\sigma_p(f(s))$ are the singular values of $f(s)$, $p=p_1 \wedge p_2$. We call $f$ the singular value symbol of $\{A_{n}\}_n$.

With the same notation as before, imposing $d_n=d_n'$, we write that $\{A_{n}\}_n$ is distributed as $f$ in the sense of eigenvalues in $ D $ with necessarily $p_1=p_2=p$ and we write $\{A_{n}\}_n \sim_\lambda (f,D)$, if
\begin{equation*}
\lim_{n \to \infty} \sum_{j=1}^{d_n} \frac{F(\lambda_j(A_n))}{d_n }  = \frac1{\mu(D)} \int_D \sum_{k=1}^p  \frac{F\left(\lambda_k(f(s))\right)}{p}ds,\qquad\forall F\in C_c(\mathbb{R}),
\end{equation*}
where $\lambda_1(f(s)), \ldots,\lambda_p(f(s))$ are the eigenvalues of $f(s)$.
We call $f$ the spectral symbol of $\{A_{n}\}_n$.
\end{defn}

\begin{defn}
Let $\{A_{n}\}_n$ be a matrix-sequence of size $d_n \times d_n'$ with $ d_n $ and $ d_n' $ monotonically increasing integer sequences and let $\left\{\{B_{n,t}\}_n\right\}_{t}$ be a sequence of matrix-sequences of the same size $d_n \times d_n'$. We say that $\left\{\{B_{n,t}\}_n\right\}_{t}$ is an approximating class of sequences (a.c.s.) for $\{A_{n}\}_n$ 
if the following condition is met: for every $t$ there exists $n_t$ such that, for $n>n_t$,
\begin{equation*}
A_n =  B_{n,t} + R_{n,t} + N_{n,t}, \qquad \textnormal{rank}\left(R_{n,t}\right) \leq c(t) d_n \wedge d_n',
\end{equation*}
$$
\left\|N_{n,t}\right\|\leq \omega(t),
$$
where $n_t, c(t), \omega(t)$ depend only on $t$, and
$$
\lim_{t \to \infty} c(t) = \lim_{t \to \infty} \omega(t) =0.
$$
\end{defn}

The following theorem constitutes a link between a.c.s. and spectral distribution.

 \begin{thm}\label{thm:main}
Let $\{A_{n}\}_n$ be a matrix-sequence of size $d_n \times d_n'$ with $ d_n $ and $ d_n' $ monotonically increasing integer sequences and let $\left\{\{B_{n,t}\}_n\right\}_{t}$ be an a.c.s. for $ \{ A_n \} $. Suppose that $ \left\{ \{ B_{n,t}\}_n\right\}_{t} \sim_{\sigma} (f_t, D) $ and $ f_t $ converges in measure to $ f $, then $ \{ A_n \}_n \sim_{\sigma} (f,D) $.
Furthermore, if $d_n = d_n'$, all the involved matrices are Hermitian, and $ \left\{ \{ B_{n,t}\}_n\right\}_{t} \sim_{\lambda} (f_t, D) $ and $ f_t $ converges in measure to $ f $, then $ \{ A_n \}_n \sim_{\lambda} (f,D) $
\end{thm}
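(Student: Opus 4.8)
The plan is to fix a test function $F\in C_c(\mathbb R)$ and establish the scalar limit in Definition~\ref{def:distributions} by a triangle inequality with three pieces. Write $\delta_n=d_n\wedge d_n'$ and, for a $d_n\times d_n'$ matrix $C$, set $\Phi_n(C)=\frac1{\delta_n}\sum_{j=1}^{\delta_n}F(\sigma_j(C))$; for a measurable Hermitian matrix-valued function $g$ on $D$ set $I_\sigma(g)=\frac1{\mu(D)}\int_D\frac1p\sum_{k=1}^pF(\sigma_k(g(s)))\,ds$. Fix $\varepsilon>0$. For every $t$ there is $n_t$ such that, for $n>n_t$, $A_n=B_{n,t}+R_{n,t}+N_{n,t}$ with $\mathrm{rank}(R_{n,t})\le c(t)\delta_n$ and $\|N_{n,t}\|\le\omega(t)$, whence
\[
 |\Phi_n(A_n)-I_\sigma(f)|\le\underbrace{|\Phi_n(A_n)-\Phi_n(B_{n,t})|}_{(1)}+\underbrace{|\Phi_n(B_{n,t})-I_\sigma(f_t)|}_{(2)}+\underbrace{|I_\sigma(f_t)-I_\sigma(f)|}_{(3)}.
\]

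For term $(3)$: since $f_t\to f$ in measure on $D$ and the singular values of a matrix are $1$-Lipschitz in its entries, $\sigma_k(f_t(\cdot))\to\sigma_k(f(\cdot))$ in measure, hence $F(\sigma_k(f_t(\cdot)))\to F(\sigma_k(f(\cdot)))$ in measure; as $|F|\le\|F\|_\infty$ and $\mu(D)<\infty$, the bounded convergence theorem gives $(3)\to0$ as $t\to\infty$. I would then fix once and for all a value $t=t(\varepsilon,F)$ with $(3)<\varepsilon$ and, moreover, with $c(t)$ and $\omega(t)$ small enough for the bound on $(1)$ derived next to be $<\varepsilon$.

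Term $(1)$ is the technical heart. Writing $C_n=B_{n,t}+R_{n,t}$, the bound $\|N_{n,t}\|\le\omega(t)$ together with the Lipschitz property $|\sigma_j(A_n)-\sigma_j(C_n)|\le\|N_{n,t}\|$ gives $|\Phi_n(A_n)-\Phi_n(C_n)|\le\omega_F(\omega(t))$ term by term, $\omega_F$ denoting the modulus of continuity of $F$. For the rank part $|\Phi_n(C_n)-\Phi_n(B_{n,t})|$ a term-by-term comparison is not available; instead I would invoke the interlacing $\sigma_{j+\rho}(B)\le\sigma_j(B+R)\le\sigma_{j-\rho}(B)$ (valid whenever $\mathrm{rank}(R)\le\rho$, with the usual conventions for out-of-range indices), which yields $|\#\{j:\sigma_j(C_n)>x\}-\#\{j:\sigma_j(B_{n,t})>x\}|\le\mathrm{rank}(R_{n,t})\le c(t)\delta_n$ for every $x$; thus the empirical singular-value measures of $C_n$ and $B_{n,t}$ are within Kolmogorov, hence L\'evy, distance $c(t)$, and since $F$ is bounded and uniformly continuous this forces $|\Phi_n(C_n)-\Phi_n(B_{n,t})|\le\eta(c(t))$ for a function $\eta$ with $\eta(0^+)=0$, uniformly in $n>n_t$. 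Altogether $(1)\le\omega_F(\omega(t))+\eta(c(t))$, which is exactly the classical a.c.s.\ perturbation estimate and which fixes the admissible $t$ in the previous step. With $t=t(\varepsilon,F)$ now fixed, $(2)\to0$ as $n\to\infty$ because $\{B_{n,t}\}_n\sim_\sigma(f_t,D)$; hence $\limsup_n|\Phi_n(A_n)-I_\sigma(f)|\le2\varepsilon$, and as $\varepsilon$ and $F$ were arbitrary, $\{A_n\}_n\sim_\sigma(f,D)$.

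For the eigenvalue statement I would first observe that $A_n-B_{n,t}$ is Hermitian, so replacing $R_{n,t},N_{n,t}$ by their Hermitian parts $\tfrac12(R_{n,t}+R_{n,t}^{*})$ and $\tfrac12(N_{n,t}+N_{n,t}^{*})$ preserves the decomposition, at most doubles the rank bound, and does not enlarge the norm bound; thus $\{B_{n,t}\}_n$ is still an a.c.s.\ for $\{A_n\}_n$, now with Hermitian corrections. The argument above then carries over verbatim with eigenvalues replacing singular values, using Weyl's inequalities for Hermitian matrices ($|\lambda_j(B+N)-\lambda_j(B)|\le\|N\|$ and $\lambda_{j+\rho}(B)\le\lambda_j(B+R)\le\lambda_{j-\rho}(B)$ for Hermitian $R$ of rank $\le\rho$) and the continuity of the eigenvalues of a Hermitian matrix in its entries for term $(3)$. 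I expect the only genuinely delicate point to be term $(1)$, and within it the rank contribution: one must use the interlacing-of-counting-functions bound rather than a naive comparison of sorted spectra. Everything else is careful bookkeeping with the order of the limits — first choose $t$ large (using $(3)$ and the uniform-in-$n$ control of $(1)$), and only then let $n\to\infty$ (using $(2)$).
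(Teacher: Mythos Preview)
Your proof is correct. The paper does not actually give a proof of this particular theorem --- it is quoted as a known background result --- but it does prove the more general Theorems~\ref{main_sv} and~\ref{gacs_sing_val}, and the technique used there differs from yours. The paper first reduces, via the decomposition $F=F^\uparrow+F^\downarrow$ with $F^\uparrow(x)=\int_0^x(F')^+$, to monotone $C^1$ test functions $G$ that vanish on $(-\infty,c_1]$ and are constant on $[c_2,\infty)$; it then feeds the interlacing inequalities $\sigma_{j+\rho}(B_{n,t})-\omega(t)\le\sigma_j(A_n)\le\sigma_{j-\rho}(B_{n,t})+\omega(t)$ directly into $G$, so that monotonicity converts the index shift $\rho=c(t)\delta_n$ into a loss of at most $c(t)\|G\|_\infty$ while the Lipschitz bound absorbs the $\omega(t)$ shift, producing the explicit error $c(t)\|G\|_\infty+\omega(t)\|G'\|_\infty$ in one stroke. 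You instead keep a general $F\in C_c$, split your term~$(1)$ into a norm piece (handled termwise by Weyl) and a rank piece, and control the latter through the counting-function/Kolmogorov-distance bound; this is more modular and makes the order of limits (fix $t$, then send $n\to\infty$) very transparent, at the price of leaving $\eta(c(t))$ implicit unless one spells out the integration-by-parts estimate $|\Phi_n(C_n)-\Phi_n(B_{n,t})|\le c(t)\|F'\|_{L^1}$ for $F\in C_c^1$. Both routes rest on the same interlacing inequality; the paper packages it via the monotone-function trick, you via empirical distribution functions. Your remark that in the Hermitian case one may replace $R_{n,t},N_{n,t}$ by their Hermitian parts (at most doubling the rank bound) is a clean detail the paper leaves tacit.
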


The tool is quite powerful and it was introduced in the seminal work by Tilli \cite{Tilliloc} on locally Toeplitz matrix-sequences. Then in \cite{acs-laa} the terminology was provided and further results were given.
 An account of the general theory on generalized Locally Toeplitz (GLT) matrix-sequences, based on the a.c.s. notion, can be found in \cite{GLT-block1D,GLT-blockdD,glt-book-1,glt-book-2,branches-IgA-k-l} and references therein.

In these books and long research/exposition papers one can find also examples of applications ranging from approximated integro-differential equations, approximated partial differential equations, approximated fractional differential equations with any kind of methods (finite elements, fine differences, isogeometric analysis, finite volumes etc) and with very mild assumptions: only Riemann integrability in the case of variable coefficients, only Peano-Jordan measurability of the domain (see e.g. \cite{PeJo} for the latter two notions), grids approximated by a given function applied on uniform grids. In particular the richness in the potential domains is obtained via the notion of reduced GLT matrix sequences: see \cite{glt-laa}[pp. 395–399] for a detailed example, \cite{glt-Fourier}[Section 3.1.4] for a initial proposal and related terminology, and the long dense paper \cite{Barb} for a systematic treatment of the subject and for a complete theoretical development.

However, in all cases the idea is the immersion of the given Peano-Jordan measurable domain into a cube or rectangle in the appropriate number of dimensions, where this idea is related to the immersed methods \cite{immersed bible,immersed book} and to the less recent idea of fictitious domains \cite{kuznetsov2,kuznetsov1}. The reader is referred also to \cite{immersed NLAA-1,immersed NLAA-2} for an asymptotical analysis and numerical methods regarding standard PDEs and fractional PDEs using the immersion idea and the reduced GLT tools.

However, the previous techniques are not applicable in the case of unbounded domain. The theorems in the present paper and the applications in Section \ref{sec:applied-numerical} {and \ref{sec:varcoeff}} fill the gap and complete the picture, by considering moving and unbounded domains with either finite or infinite Lebesgue measure. Here, the basic a.c.s. and the new g.a.c.s. notions are the main tools which allow us to go beyond the GLT machinery. The new g.a.c.s. concept allows to deal with matrix-sequences defined by different sequences of dimensions and this may happen e.g. in several approximation schemes for differential equations. The idea is already present in \cite{ratn1} in Theorem 4.3, Corollary 4.4 and it is reminiscent of the extradimensional approach proposed by Tyrtyshnikov more than two decades ago (see again \cite{ratn1}, beginning of Section 4.2).

\section{Multi-index notation, Toeplitz and multi-level Toeplitz matrices}\label{sec:Toeplitz}

In this section, we first introduce a multi-index notation that we use hereafter. Given an integer $d\geq 1$, a $d$-index $\bfkk$ is an element of $\mathbb{Z}^d$, that is, $\bfkk=\left(k_1, \ldots, k_d\right)$ with $k_r \in \mathbb{Z}$ for every $r=1,\ldots, d$. We intend $\mathbb{Z}^d$ equipped with the standard lexicographic ordering, that is, given two $d$-indices $\bfii=(i_1,\ldots,i_d)$, $\bfjj=(j_1,\ldots,j_d)$, we write $\bfii \vartriangleleft \bfjj$ if $i_r<j_r$ for the first $r=1,2,\ldots,d$ such that $i_r\neq j_r$. The relations $\trianglelefteq,\vartriangleright,\trianglerighteq$ are defined accordingly.
	
	Given two $d$-indices $\bfii,\bfjj$, we write $\bfii < \bfjj$ if $i_r< j_r$ for every $r=1,\ldots,d$. The relations $\leq,>,\geq$ are defined accordingly.
	
We use bold letters for vectors and vector/matrix-valued functions. We indicate with $\bf{0},\bf{1},\bf{2},\ldots$,  the $d$-dimensional constant vectors $\left(0,0,\ldots,0\right)$, $\left(1,1,\ldots,1\right)$, $\left(2,2,\ldots,2\right),\ldots$, respectively. With the notation $\frac{\bfii}{\bfnn}$ we mean the element-wise division of vectors, i.e., $\frac{\bfii}{\bfnn}= \left(\frac{i_1}{n_1},\ldots,\frac{i_d}{n_d} \right)$. We write $|\bfii|$ for the vector $\left(|i_1|, \ldots, |i_d|\right)$. Finally, given a $d$-index $\bfnn$, we write $\bfnn \to \infty$ meaning that $\min_{r=1,\ldots,d}\{n_r\}\to \infty$.\\

A Toeplitz matrix of order $n$ is characterized by the fact that all the diagonals are constant: $ (T_n)_{i,j} =t_{i-j} $ for $ i,j=1,...,n $ and some coefficients $ t_k$, $k=1-n,...,n-1$:
$$
T_n=\left(
\begin{array}{cccc}
\textsf{t}_0      &   \textsf{t}_{-1}  & \cdots & \textsf{t}_{1-n}\\
\textsf{t}_1    &    \textsf{t}_0   & \ddots & \vdots \\
\vdots & \ddots & \ddots & \textsf{t}_{-1} \\
\textsf{t}_{n-1} & \cdots & \textsf{t}_1     & \textsf{t}_0\\
\end{array}
\right).
$$
When every term $ t_k $ is a matrix of fixed size $ p_1\times p_2 $, we say that $T_n$ is of block Toeplitz type. The definition of $d$-level Toeplitz matrices is more involved and it is based on the following recursive idea: a $d$-level Toeplitz matrix is a Toeplitz matrix where each coefficient $ t_k $ is a $(d-1)$-level Toeplitz matrix and so on. Using standard multi-index notation, we can give a more detailed definition as follows: a $d$-level Toeplitz matrix is a matrix $ T_{\mathbf{n}} $ such that
$$T_\bfnn=\left(\textsf{t}_{\bfii-\bfjj}\right)_{\bfii,\bfjj=\bfuno}^{\bfnn}\in\mathbb C^{(n_1\cdots n_d)\times(n_1\cdots n_d)},$$
with the multi-index $\bfnn$ such that $\bfzero<\bfnn=(n_1,\ldots,n_d)$ and $\textsf{t}_\bfkk\in\mathbb C$,  $-(\bfnn-\bfuno)\trianglelefteq \bfkk\trianglelefteq \bfnn-\bfuno$. If the basic element $ t_{\bfkk} $ is a block of fixed size $p_1\times p_2$, $\max\{p_1,p_2\} \geq 2 $, we write that the matrix is a $d$-level block Toeplitz matrix and we denote it by $ T_{\bfnn,p_1,p_2} $:
$$
T_{\bfnn,p_1,p_2}=\left(\textsf{t}_{\bfii-\bfjj}\right)_{\bfii,\bfjj=\bfuno}^{\bfnn}\in\mathbb C^{(n_1\cdots n_d p_1)\times(n_1\cdots n_d p_2)}, \qquad \textsf{t}_\bfkk\in\mathbb{C}^{p_1\times p_2}.
$$
Given now a function $ \bfff : [-\pi,\pi]^d \to \mathbb{C}^{p_1\times p_2} $ in $ {L}^1([-\pi,\pi]^d) $, we denote its Fourier coefficients by
\begin{equation*}
\hat{\bfff}_\bfkk=\frac1{(2\pi)^d}\int_{[-\pi,\pi]^d}\bfff(\bftheta)\e^{-\i\,\bfkk\cdot\bftheta}d\bftheta\in\mathbb C^{p_1\times p_2},\qquad\bfkk\in\mathbb Z^d, \quad \bfkk\cdot\bftheta=\sum_{r=1}^d k_r \theta_r,
\end{equation*}
(the integrals are done component-wise), and we associate to $\bfff$ the family of $d$-level block Toeplitz matrices
\begin{equation*}
T_{\bfnn,p_1,p_2}(\bfff):=\left(\hat{\bfff}_{\bfii-\bfjj}\right)_{\bfii,\bfjj=\bfuno}^{\bfnn},\qquad\bfnn\in\mathbb N^d.
\end{equation*}
In this context, $\bfff$ is called the generating function of the sequence $ \{ T_{\bfnn,p_1,p_2} \} $.\\
The following result links the definition of symbol function and generating function for Toeplitz matrix-sequences.
\begin{thm}[\cite{Tillinota,Tillicomplex,DNS}]\label{thm:symbol_d-block-toeplitz}
Let $\bfff:[-\pi,\pi]^d\to\mathbb C^{p_1\times p_2}$ be a function belonging to ${L}^1([-\pi,\pi]^d)$, $p_1,p_2,d\ge 1$. Then
$$
\left\{T_{\bfnn,p_1,p_2}(\bfff)  \right\}_\bfnn \sim_\sigma \bfff,
$$
that is the generating function of $\left\{T_{\bfnn,p_1,p_2}(\bfff)  \right\}_\bfnn$ coincides with its singular value symbol.
When $p_1=p_2=p$ and $\bfff$ is Hermitian-valued almost everywhere or belongs to the Tilli class, i.e. $\bfff$ is essentially bounded, the closure of its range has empty interior, and the range does not disconnect the complex field (see \cite{Tillicomplex} when $p=1$ and \cite{DNS} when $p>1$), we have
\[
\left\{T_{\bfnn,p,p}(\bfff)  \right\}_\bfnn \sim_\lambda \bfff,
\]
that is the generating function of $\left\{T_{\bfnn,p,p}(\bfff)  \right\}_\bfnn$ coincides with its spectral symbol. Here, when $p_1=p_2=p$, for range of $\bfff$ we mean the union of the ranges of the $p$ eigenvalue functions of $\bfff$.
\end{thm}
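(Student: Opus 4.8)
The plan is to reduce, via the a.c.s.\ notion and Theorem~\ref{thm:main}, to the case of a \emph{trigonometric polynomial} symbol, and to settle that case by comparison with block circulant matrices; the Tilli-class eigenvalue statement will then need a separate, more delicate argument. The first ingredient is the Schatten-$1$ (trace-norm) bound
$$
\|T_{\bfnn,p_1,p_2}(\bfg)\|_1\ \le\ \frac{\sqrt{p_1p_2}}{(2\pi)^d}\,(n_1\cdots n_d)\,\|\bfg\|_{L^1([-\pi,\pi]^d)},\qquad \bfg\in L^1([-\pi,\pi]^d,\mathbb C^{p_1\times p_2}),
$$
obtained by bounding $\|T_{\bfnn,p_1,p_2}(\bfg)\|_1$ through its Fourier coefficients $\hat\bfg_\bfkk$ and the block-banded structure. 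I would then pick trigonometric polynomials $\bfff_m$ with $\|\bfff-\bfff_m\|_{L^1}\to 0$ — taking the Ces\`aro--Fej\'er means of the Fourier series of $\bfff$ when $\bfff$ is Hermitian-valued, so that each $\bfff_m$ stays Hermitian-valued a.e. Writing $\eta_m:=\frac{\sqrt{p_1p_2}}{(2\pi)^d}\|\bfff-\bfff_m\|_{L^1}$ and splitting the singular value decomposition of $T_{\bfnn,p_1,p_2}(\bfff-\bfff_m)$ at threshold $\sqrt{\eta_m}$ yields
$$
T_{\bfnn,p_1,p_2}(\bfff)=T_{\bfnn,p_1,p_2}(\bfff_m)+R_{\bfnn,m}+N_{\bfnn,m},\qquad \|N_{\bfnn,m}\|\le\sqrt{\eta_m},\qquad \mathrm{rank}(R_{\bfnn,m})\le\sqrt{\eta_m}\,(n_1\cdots n_d),
$$
so $\big\{\{T_{\bfnn,p_1,p_2}(\bfff_m)\}_\bfnn\big\}_m$ is an a.c.s.\ for $\{T_{\bfnn,p_1,p_2}(\bfff)\}_\bfnn$ with vanishing parameters $c(m),\omega(m)$. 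Since $L^1$ convergence forces convergence in measure, by Theorem~\ref{thm:main} it suffices to prove the result for each fixed polynomial $\bfff_m$.

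For polynomial $\bfff_m$, the matrix $T_{\bfnn,p_1,p_2}(\bfff_m)$ is block-banded at every level and differs from the periodized $d$-level block circulant $C_\bfnn(\bfff_m)$ only in $O\big((n_1\cdots n_d)/\min_r n_r\big)$ rows and columns, hence by a matrix of rank $o(n_1\cdots n_d)$. The circulant $C_\bfnn(\bfff_m)$ is unitarily similar, via the $d$-level Fourier matrix tensored with $I_p$, to the block-diagonal matrix whose diagonal blocks are the values $\bfff_m(2\pi\bfjj/\bfnn)$ of $\bfff_m$ on the uniform grid; thus its relevant singular values are exactly the numbers $\sigma_k(\bfff_m(2\pi\bfjj/\bfnn))$, and Riemann summation gives $\{C_\bfnn(\bfff_m)\}_\bfnn\sim_\sigma(\bfff_m,[-\pi,\pi]^d)$. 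A rank-$o(n_1\cdots n_d)$ perturbation does not affect the limiting singular value distribution (a standard consequence of the interlacing inequalities for singular values), so $\{T_{\bfnn,p_1,p_2}(\bfff_m)\}_\bfnn\sim_\sigma(\bfff_m,[-\pi,\pi]^d)$; combined with the reduction step this proves $\{T_{\bfnn,p_1,p_2}(\bfff)\}_\bfnn\sim_\sigma\bfff$. When $\bfff$, hence $\bfff_m$, is Hermitian-valued, every matrix above is Hermitian, the relevant eigenvalues of $C_\bfnn(\bfff_m)$ are the real numbers $\lambda_k(\bfff_m(2\pi\bfjj/\bfnn))$, a rank-$o(n_1\cdots n_d)$ \emph{Hermitian} perturbation still preserves the eigenvalue distribution, and Theorem~\ref{thm:main} (eigenvalue part, all matrices Hermitian) gives $\{T_{\bfnn,p,p}(\bfff)\}_\bfnn\sim_\lambda\bfff$.

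The genuinely hard part is the Tilli-class eigenvalue statement, where both pillars of the previous argument fail: a rank-$o(\cdot)$ perturbation of a non-normal matrix can displace its eigenvalues arbitrarily, and the eigenvalue half of Theorem~\ref{thm:main} is available only for Hermitian matrices. Here I would follow \cite{Tillicomplex} (for $p=1$) and \cite{DNS} (for $p>1$): \emph{(i)} from the already-established $\{T_{\bfnn,p,p}(\bfff)\}_\bfnn\sim_\sigma\bfff$, the uniform bound $\|T_{\bfnn,p,p}(\bfff)\|\le\|\bfff\|_{L^\infty}$, and the hypothesis that $\mathcal R:=\overline{\mathrm{range}(\bfff)}$ does not disconnect $\mathbb C$, show that for every $\varepsilon>0$ all but $o(n_1\cdots n_d)$ eigenvalues of $T_{\bfnn,p,p}(\bfff)$ lie in the $\varepsilon$-neighbourhood of $\mathcal R$ — a spectral-localization result, typically obtained from a logarithmic-potential / subharmonicity argument applied to $\frac1{n_1\cdots n_d}\log|\det(T_{\bfnn,p,p}(\bfff)-zI)|$ and to the singular value distribution of $\bfff-zI$; \emph{(ii)} since $\mathcal R$ is compact, has empty interior, and has connected complement, Mergelyan's theorem lets me approximate any $F\in C_c(\mathbb C)$ uniformly on $\mathcal R$ by holomorphic polynomials $P$; \emph{(iii)} for such $P$, use the Szeg\H{o}-type trace formula $\frac1{(n_1\cdots n_d)p}\,\mathrm{tr}\,P(T_{\bfnn,p,p}(\bfff))\to\frac1{(2\pi)^d}\int_{[-\pi,\pi]^d}\frac1p\,\mathrm{tr}\,P(\bfff(\bftheta))\,d\bftheta$, which follows from $\mathrm{tr}\,T_{\bfnn,p,p}(\bfg)=(n_1\cdots n_d)\,\mathrm{tr}(\hat\bfg_\bfzero)$ together with $\mathrm{tr}\big(T_{\bfnn,p,p}(\bfff)^{j}\big)=\mathrm{tr}\big(T_{\bfnn,p,p}(\bfff^{j})\big)+o(n_1\cdots n_d)$ (a low-rank identity for polynomial $\bfff$, extended to $\bfff\in L^\infty$ by an a.c.s./trace-norm argument); \emph{(iv)} assemble (i)--(iii): since $F$ is bounded the $o(n_1\cdots n_d)$ stray eigenvalues contribute $o(1)$ to $\frac1{(n_1\cdots n_d)p}\sum_iF(\lambda_i(T_{\bfnn,p,p}(\bfff)))$, while on the remaining eigenvalues $F\approx P$, which yields $\{T_{\bfnn,p,p}(\bfff)\}_\bfnn\sim_\lambda\bfff$. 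I anticipate the spectral-localization step (i) and the error control in (iii) to be the main obstacles; everything else is routine once the trace-norm estimate and Theorem~\ref{thm:main} are in hand.
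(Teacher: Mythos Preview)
The paper does not prove this theorem: it is quoted as a known result with citations to \cite{Tillinota,Tillicomplex,DNS}, and no argument is given in the text. Consequently there is no ``paper's own proof'' to compare your proposal against.

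For what it is worth, your outline is essentially the standard route followed in those very references: the trace-norm bound plus $L^1$ approximation by trigonometric polynomials to build an a.c.s., the circulant comparison for the polynomial case (singular values and Hermitian eigenvalues), and then, for the Tilli class, the combination of spectral localization near $\overline{\mathrm{range}(\bfff)}$, Mergelyan-type polynomial approximation on a nowhere-dense set with connected complement, and Szeg\H{o}-type trace asymptotics. The places you flag as delicate --- the localization step and the error control in the trace formula --- are exactly the nontrivial points in \cite{Tillicomplex,DNS}. So your plan is sound and matches the classical proofs, but there is nothing in the present paper to set it against.
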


\section{Main results}\label{sec:Main}

We start this section by stating and proving a theorem which extends the practical purposes of the a.c.s. notion to the case of unbounded domains, possibly of infinite measure, using exhaustions by bounded sets of finite measure.

\begin{thm}\label{main_sv}
Let $\{A_n\}$ be a matrix-sequence with $A_n$ of size $ d_n \times d_n' $ with $ d_n $ and $ d_n' $ monotonically increasing integer sequences. Let $ D $ be a measurable set in $ \R^{d} $ for some $ d \geq 1 $, possibly unbounded and of infinite measure
and let $f:D\rightarrow \mathbb{C}^{p_1\times p_2}$, $p_1,p_2\ge 1$, be a measurable function with $p=p_1 \wedge p_2$.
Let $ D_t $ be an exhaustion of $ D $, that is, $ D_{t + 1} \supset D_t $ for every $ t > 0 $ and $ \bigcup_{t > 0} D_t = D $. Assume that $ \exists \{ B_{n,t}\} $ such that $ \{ \{ B_{n,t}\} : \, t > 0 \} $ is an a.c.s. for $ \{ A_n \} $ with $\{ B_{n,t} \} \sim_{\sigma} (f_t, D_t)$, i.e.,
\begin{equation*}
\lim_{n \to \infty} \sum_{j=1}^{d_n \wedge d_n'} \frac{F(\sigma_j(B_{n,t})}{d_n \wedge d_n'} = \frac{1}{\mu(D_t)} \int_{D_t} \sum_{k=1}^{p} \frac{F(\sigma_k(f_t(s)))}{p} ds = \alpha_t(F)
\end{equation*}
for every $ F \in C_c(\R) $.
Assume that, for every $ F \in C_c(\R) $,
\begin{equation*}
\lim_{t \to \infty} \alpha_t(F) = \alpha(F).
\end{equation*}
Then
\begin{equation}\label{limit}
\lim_{n \to \infty} \sum_{j=1}^{d_n \wedge d_n'} \frac{F(\sigma_j(A_n)}{d_n \wedge d_n'} = \alpha(F).
\end{equation}
If in addition we have
\begin{equation}\label{limit_func}
\lim_{t \to \infty} f_t^E = f
\end{equation}
in $D$ almost everywhere, with
\begin{equation*}
f_t^E= \begin{cases}
f_t(s) & \text{ if } s \in D_t \\
0 & \text{ otherwise},
\end{cases}
\end{equation*}
then we write $ \{ A_n \} \sim_{\sigma, \text{moving}} (f,D) $. Moreover, if $ \mu(D) < \infty$, we have
\begin{equation*}
\alpha(F) = \frac{1}{\mu(D)} \int_{D} \sum_{k=1}^{p} \frac{F(\sigma_k(f(s)))}{p} ds.
\end{equation*}
\end{thm}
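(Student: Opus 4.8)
The plan is to reduce the statement to a diagonal-type argument through the classical a.c.s./distribution machinery of Theorem~\ref{thm:main}, treating the parameter $t$ as a continuous index and exploiting the monotonicity $D_{t+1}\supset D_t$. First I would fix a test function $F\in C_c(\R)$ and observe that the hypothesis $\{B_{n,t}\}\sim_\sigma(f_t,D_t)$ says precisely that for each fixed $t$ the empirical singular-value measures of $B_{n,t}$ converge weakly-$*$ to the functional $\alpha_t$; combined with $\alpha_t(F)\to\alpha(F)$ as $t\to\infty$, this is exactly the setup in which the a.c.s. property of $\{\{B_{n,t}\}:t>0\}$ for $\{A_n\}$ forces the empirical singular-value measures of $A_n$ to converge to $\alpha$. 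Concretely, the proof of Theorem~\ref{thm:main} (or rather its standard proof via the a.c.s. splitting $A_n=B_{n,t}+R_{n,t}+N_{n,t}$) already delivers \eqref{limit}: given $\varepsilon>0$ one picks $t$ so large that $c(t),\omega(t)$ are small and $|\alpha_t(F)-\alpha(F)|<\varepsilon$, then uses the rank-$\leq c(t)(d_n\wedge d_n')$ perturbation $R_{n,t}$ (which moves at most $O(c(t))$ of the normalized singular-value mass, controlled by $\|F\|_\infty$) and the norm-$\leq\omega(t)$ perturbation $N_{n,t}$ (which perturbs each $F(\sigma_j)$ by at most the modulus of continuity $\omega_F(\omega(t))$ via Weyl-type singular value inequalities), and finally lets $n\to\infty$ using $\{B_{n,t}\}\sim_\sigma(f_t,D_t)$. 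So the first block of the proof is really just an invocation/adaptation of the already-stated Theorem~\ref{thm:main}, with $\alpha$ in place of an integral functional.

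The second, genuinely new part is the identification of $\alpha(F)$ as $\frac{1}{\mu(D)}\int_D \sum_{k=1}^p \frac{F(\sigma_k(f(s)))}{p}\,ds$ under the extra hypotheses $\mu(D)<\infty$ and $f_t^E\to f$ a.e. Here I would argue as follows. By definition,
\begin{equation*}
\alpha_t(F)=\frac{1}{\mu(D_t)}\int_{D_t}\sum_{k=1}^p\frac{F(\sigma_k(f_t(s)))}{p}\,ds
=\frac{1}{\mu(D_t)}\int_{D}\mathbf 1_{D_t}(s)\,\phi_F(f_t^E(s))\,ds,
\end{equation*}
where $\phi_F(M):=\sum_{k=1}^p F(\sigma_k(M))/p$ for a $p_1\times p_2$ matrix $M$. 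Since $D_t\uparrow D$, we have $\mu(D_t)\to\mu(D)\in(0,\infty)$ by continuity of measure from below; and since $f_t^E\to f$ a.e.\ on $D$ and $\phi_F$ is continuous on matrices (singular values depend continuously on the matrix, and $F$ is continuous) and bounded by $\|F\|_\infty$, dominated convergence on the finite-measure space $D$ gives $\int_D \mathbf 1_{D_t}\phi_F(f_t^E)\to \int_D \phi_F(f)$. Dividing, $\alpha_t(F)\to \frac{1}{\mu(D)}\int_D\phi_F(f(s))\,ds$, and by uniqueness of the limit this equals $\alpha(F)$. Combined with \eqref{limit} this yields $\{A_n\}\sim_\sigma(f,D)$ in the ordinary sense, which is the final assertion.

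The main obstacle is the first part: the statement of Theorem~\ref{thm:main} as quoted is phrased for symbols $f_t$ converging in measure to $f$ on a \emph{fixed bounded} $D$, whereas here the domains $D_t$ move and $D$ may have infinite measure, so $\alpha$ is given only abstractly as a limit of functionals rather than as an integral against a symbol. One must therefore either re-run the a.c.s. perturbation estimate directly at the level of the functionals $\alpha_t$ (replacing ``$\int_D \phi_F(f_t)$'' by ``$\alpha_t(F)$'' throughout the proof of Theorem~\ref{thm:main}), which is the cleanest route and requires only that $\alpha$ is a bounded linear functional on $C_c(\R)$ arising as a pointwise limit of the $\alpha_t$, or invoke a version of Theorem~\ref{thm:main} in which the hypothesis is stated directly in terms of weak-$*$ convergence of empirical measures rather than convergence in measure of symbols. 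I expect the bookkeeping in the rank-perturbation term — showing that replacing $B_{n,t}$ by $A_n$ changes the normalized sum $\frac{1}{d_n\wedge d_n'}\sum_j F(\sigma_j(\cdot))$ by at most $2c(t)\|F\|_\infty + \omega_F(\omega(t))$ uniformly in $n$ — to be the only technically delicate estimate, and it is entirely standard (it is exactly the estimate underlying Theorem~\ref{thm:main}), so no essential difficulty beyond careful quantifier management remains.
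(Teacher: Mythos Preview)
Your proposal is correct and follows essentially the same route as the paper's proof. Both arguments re-run the a.c.s.\ perturbation estimate directly at the level of the functionals $\alpha_t$ (rather than invoking Theorem~\ref{thm:main} as a black box), and both identify $\alpha(F)$ in the finite-measure case by dominated convergence using $\mu(D_t)\to\mu(D)$ and $f_t^E\to f$ a.e.

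The one technical detail the paper makes explicit and you leave implicit is the reduction to \emph{monotone} test functions: after passing to $C^1_c$ by density, the paper writes $F=F^{\uparrow}+F^{\downarrow}$ with $F^{\uparrow}(x)=\int_0^x (F')^+$ and $F^{\downarrow}(x)=\int_0^x (F')^-$, and then works with a nondecreasing $G$. This is what makes the Weyl interlacing inequalities $\sigma_{j+\alpha(t)N}(B_{n,t})-\omega(t)\le\sigma_j(A_n)\le\sigma_{j-\alpha(t)N}(B_{n,t})+\omega(t)$ directly usable, since one can apply $G$ to both sides and then shift the summation index, picking up an error of size $\alpha(t)\|G\|_\infty$ from the rank part and $\omega(t)\|G'\|_\infty$ from the norm part. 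Your phrasing ``moves at most $O(c(t))$ of the normalized singular-value mass, controlled by $\|F\|_\infty$'' is morally the same estimate, but note that for a non-monotone $F$ the constant is the total variation of $F$ rather than $\|F\|_\infty$; the monotone decomposition is precisely what justifies this. With that caveat, your sketch matches the paper's argument step for step.
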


\begin{proof}
By the assumption $\{ B_{n,t}\} \sim (f_t, D_t) $ we know that $ \mu(D_t) >0 $ and hence, since $ D \supset D_t $ for every $ t >0 $, we deduce that $ \mu(D) >0 $.\\
First, we note that relation \eqref{limit} is true if and only if it is true for any $ F $ in $ C^{1} $ with bounded support by simple functional approximation techniques. At this point, taking $ F=F^\uparrow +  F^\downarrow $, with $ F^\uparrow(x) = \int_{0}^{x} (F')^+(t) dt $ and $ F^\downarrow (x) = \int_{0}^{x} (F')^-(t) dt$, where $ g^+ =\max(g, 0) $ and $ g^-=\max(-g,0) $, we deduce by linearity that \eqref{limit} is true if and only if it holds for every $ G $ continuous, differentiable (with $ G' $ nonnegative), being equal to $ 0 $ for every $ x \leq c_1 $ and being constant (equal to $ \|G\|_{\infty} $) for every $ x \geq c_2 $.
Now, since $ \{ \{ B_{n,t}\} \, : \, t > 0 \} $ is an a.c.s. for $ \{ A_n \} $, we deduce that $ \exists s(n,t) \leq \alpha(t) (d_n \wedge d_n') $ for which
\begin{equation}\label{ineq_singval}
\sigma_{j+\alpha(t)(d_n \wedge d_n')}(B_{n,t}) - \frac{1}{t} \leq \sigma_j(A_n) \leq \sigma_{j-\alpha(t)(d_n \wedge d_n')} (B_{n,t}) + \frac{1}{t},
\end{equation}
where $ \lim_{t \to \infty} \alpha(t) = 0 $ and $ j-\alpha(t) (d_n \wedge d_n') $ has to be intended equal to $1$ if $j-\alpha(t) (d_n \wedge d_n')\leq 1$ and it has to be intended equal to $ d_n \wedge d_n' $ if $ j-\alpha(t) (d_n \wedge d_n') \geq d_n \wedge d_n' $.
Inequalities in \eqref{ineq_singval}, together with the features of $ G $, lead to
\begin{align*}
\frac{1}{d_n \wedge d_n'} \sum_{j=1}^{d_n \wedge d_n'} G \bigg( \sigma_j(B_{n,t}) &-\frac{1}{t} \bigg) - \| G \|_{\infty} \alpha(t) \leq \\
&\leq \frac{1}{d_n \wedge d_n'} \sum_{j=1}^{d_n \wedge d_n'} G(\sigma_j(A_n)) \leq \frac{1}{d_n \wedge d_n'} \sum_{j=1}^{d_n \wedge d_n'} G \left( \sigma_j(B_{n,t})+\frac{1}{t} \right) + \| G \|_{\infty} \alpha(t).
\end{align*}
Since
\begin{equation*}
G\left(x+\frac{1}{t}\right) \leq G(x)+ \|G'\|_{\infty}\frac{1}{t} \quad \text{ and }
G\left(x-\frac{1}{t}\right) \geq G(x)- \|G'\|_{\infty}\frac{1}{t}
\end{equation*}
we conclude that
\begin{align*}
\frac{1}{d_n \wedge d_n'} \sum_{j=1}^{d_n \wedge d_n'} G ( &\sigma_j(B_{n,t})) -\frac{1}{t} \|G'\|_{\infty} - \| G \|_{\infty} \alpha(t) \leq \\
&\leq \frac{1}{d_n \wedge d_n'} \sum_{j=1}^{d_n \wedge d_n'} G(\sigma_j(A_n)) \leq \frac{1}{d_n \wedge d_n'} \sum_{j=1}^{d_n \wedge d_n'} G(\sigma_j(B_{n,t}))+\frac{1}{t} \|G'\|_{\infty} + \| G \|_{\infty} \alpha(t).
\end{align*}
Since
\begin{equation*}
\lim_{t \to \infty} \frac{1}{t} \| G' \|_{\infty} \pm \alpha(t) \| G\|_{\infty} = 0,
\end{equation*}
taking any $ \varepsilon > 0$ there exists $ n_{\varepsilon} = n_{\varepsilon,t,G} $ such that for every $ n \geq n_{\varepsilon} $ we have
\begin{equation*}
\frac{1}{d_n \wedge d_n'} \sum_{j=1}^{d_n \wedge d_n'} G(\sigma_j(A_n)) \in \left[ \alpha_t(G)-\frac{\varepsilon}{2}, \alpha_t(G) + \frac{\varepsilon}{2} \right] \subseteq \left[ \alpha(G)-\varepsilon, \alpha(G) + \varepsilon \right].
\end{equation*}
This concludes the first part of the theorem.\\
For the in addition part, we observe that under the assumption $ \mu(D) < \infty $, we have $ \lim_{t \to \infty} \mu(D_t)=\mu(D) $ and $\lim_{t \to \infty} \mu ( D\setminus D_t ) =0$. Using Equation \eqref{limit_func}, it follows that
\begin{align*}
\lim_{t \to \infty} \int_{D_t} \frac{\sum_{k=1}^p F(\sigma_k(f_t(s)))}{p} ds &= \lim_{t \to \infty} \left( \int_{D} \frac{\sum_{k=1}^p F(\sigma_k(f_t^E(s)))}{p} ds - F(0) \mu(D \setminus D_t) \right)\\
&=  \int_{D} \frac{\sum_{k=1}^p F(\sigma_k(f(s)))}{p} ds,
\end{align*}
ending the proof.
\end{proof}

Under the hypothesis that the sequences are constituted by Hermitian matrices, we can extend the previous result to the case of eigenvalues by obtaining the theorem below. The proof is an eigenvalue version of the proof of Theorem \ref{main_sv}: the steps are identical and we leave them to the interested reader.

\begin{thm}\label{main_eig}
Let $\{A_n\}$ be a matrix-sequence with $A_n$ of order $ d_n$ such that $ A_n = A_n^*$ with $ d_n $ and monotonically increasing integer sequence. Let $ D $ be a measurable set in $ \R^{d} $ for some $ d \geq 1 $, possibly unbounded and of infinite measure
and let $f:D\rightarrow \mathbb{C}^{p\times p}$, $p\ge 1$, be a measurable function.
Let $ D_t $ be an exhaustion of $ D $, that is, $ D_{t + 1} \supset D_t $ for every $ t > 0 $ and $ \bigcup_{t > 0} D_t = D $. Assume that $ \exists \{ B_{n,t}\} $ such that $B_{n,t}=B_{n,t}^* $ for every $ n,t $ and that $ \{ \{ B_{n,t}\} : \, t > 0 \} $ is an a.c.s. for $ \{ A_n \} $ with $\{ B_{n,t} \} \sim_{\lambda} (f_t, D_t)$, i.e.,
\begin{equation*}
\lim_{n \to \infty} \sum_{j=1}^{d_n } \frac{F(\lambda_j(B_{n,t})}{d_n } = \frac{1}{\mu(D_t)} \int_{D_t} \sum_{k=1}^{p} \frac{F(\lambda_k(f_t(s)))}{p} ds = \alpha_t(F)
\end{equation*}
for every $ F \in C_c(\R) $.
Assume that, for every $ F \in C_c(\R) $,
\begin{equation*}
\lim_{t \to \infty} \alpha_t(F) = \alpha(F).
\end{equation*}
Then
\begin{equation}\label{limit}
\lim_{n \to \infty} \sum_{j=1}^{d_n } \frac{F(\lambda_j(A_n)}{d_n } = \alpha(F).
\end{equation}
If in addition we have
\begin{equation}\label{limit_func}
\lim_{t \to \infty} f_t^E = f
\end{equation}
in $D$ almost everywhere, with
\begin{equation*}
f_t^E= \begin{cases}
f_t(s) & \text{ if } s \in D_t \\
0 & \text{ otherwise},
\end{cases}
\end{equation*}
then we write $ \{ A_n \} \sim_{\lambda, \text{moving}} (f,D) $. Moreover, if $ \mu(D) < \infty$, we have
\begin{equation*}
\alpha(F) = \frac{1}{\mu(D)} \int_{D} \sum_{k=1}^{p} \frac{F(\lambda_k(f(s)))}{p} ds.
\end{equation*}
\end{thm}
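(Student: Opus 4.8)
\noindent
The plan is to transcribe the proof of Theorem~\ref{main_sv} essentially line by line, replacing every singular value $\sigma_j$ by the corresponding eigenvalue $\lambda_j$ and replacing the singular-value perturbation estimate \eqref{ineq_singval} by its Hermitian analogue. First I would perform the same functional reduction: by density in $C_c(\R)$ the asserted limit is equivalent to its validity for all $F\in C^1$ with bounded support, and then, writing $F=F^\uparrow+F^\downarrow$ with $F^\uparrow(x)=\int_0^x (F')^+(t)\,dt$ and $F^\downarrow(x)=\int_0^x (F')^-(t)\,dt$, to its validity for every monotone test function $G$ that is continuous, differentiable with $G'\ge 0$, equal to $0$ for $x\le c_1$ and equal to the constant $\|G\|_\infty$ for $x\ge c_2$. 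This reduction is purely functional-analytic and does not distinguish singular values from eigenvalues, so it carries over unchanged.

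The one genuinely new ingredient is the Hermitian perturbation step. Since $A_n=A_n^*$ and $B_{n,t}=B_{n,t}^*$, the difference $E_{n,t}:=A_n-B_{n,t}$ is Hermitian; from the a.c.s.\ splitting $E_{n,t}=R_{n,t}+N_{n,t}$ with $\mathrm{rank}(R_{n,t})\le c(t)d_n$ and $\|N_{n,t}\|\le\omega(t)$, Weyl's inequality for singular values gives $\sigma_{c(t)d_n+1}(E_{n,t})\le\omega(t)$, i.e.\ at most $c(t)d_n$ eigenvalues of $E_{n,t}$ exceed $\omega(t)$ in modulus. Truncating the spectral decomposition of $E_{n,t}$ accordingly, one obtains a \emph{Hermitian} splitting $E_{n,t}=\widetilde R_{n,t}+\widetilde N_{n,t}$ with $\widetilde R_{n,t}=\widetilde R_{n,t}^*$ of rank $r_{n,t}\le c(t)d_n$ and $\widetilde N_{n,t}=\widetilde N_{n,t}^*$ with $\|\widetilde N_{n,t}\|\le\omega(t)$. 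Since a Hermitian matrix of rank $r$ has at most $r$ positive and at most $r$ negative eigenvalues, combining the interlacing inequalities for the rank-$r_{n,t}$ perturbation $\widetilde R_{n,t}$ with the $\|\cdot\|$-stability of eigenvalues under $\widetilde N_{n,t}$ yields
\[
\lambda_{j+r_{n,t}}(B_{n,t})-\omega(t)\ \le\ \lambda_j(A_n)\ \le\ \lambda_{j-r_{n,t}}(B_{n,t})+\omega(t),
\]
with the index truncated to $\{1,\dots,d_n\}$ in the usual way. This is exactly \eqref{ineq_singval} with $\sigma$ replaced by $\lambda$, $\alpha(t)(d_n\wedge d_n')$ by $r_{n,t}$, and $1/t$ by $\omega(t)$; since $c(t),\omega(t)\to0$ as $t\to\infty$, everything downstream is identical.

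From here I would copy the rest of the proof of Theorem~\ref{main_sv}: monotonicity of $G$ together with the displayed two-sided bound, and the elementary estimates $G(x\pm\omega(t))\le G(x)\pm\|G'\|_\infty\omega(t)$, sandwich $\frac1{d_n}\sum_{j=1}^{d_n}G(\lambda_j(A_n))$ between $\frac1{d_n}\sum_{j=1}^{d_n}G(\lambda_j(B_{n,t}))$ plus, respectively minus, $\|G'\|_\infty\omega(t)+\|G\|_\infty c(t)$; letting $n\to\infty$ at fixed $t$ and invoking $\{B_{n,t}\}\sim_{\lambda}(f_t,D_t)$ replaces the $B_{n,t}$-average by $\alpha_t(G)$, and letting $t\to\infty$, using $\alpha_t(G)\to\alpha(G)$ and $c(t),\omega(t)\to0$, yields the claimed identity. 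For the ``in addition'' part the argument is verbatim that of Theorem~\ref{main_sv}: when $\mu(D)<\infty$ one has $\mu(D\setminus D_t)\to0$, and from the a.e.\ convergence $f_t^E\to f$ on $D$, $\lambda_k(0)=0$, and dominated convergence applied to
\[
\int_{D_t}\frac{\sum_{k=1}^p F(\lambda_k(f_t(s)))}{p}\,ds=\int_{D}\frac{\sum_{k=1}^p F(\lambda_k(f_t^E(s)))}{p}\,ds-F(0)\,\mu(D\setminus D_t),
\]
one identifies $\alpha(F)$ with $\frac1{\mu(D)}\int_D\frac{\sum_{k=1}^p F(\lambda_k(f(s)))}{p}\,ds$.

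The only delicate point, and the one I would flag as the main obstacle, is the passage from the generic (not necessarily Hermitian) a.c.s.\ splitting $R_{n,t}+N_{n,t}$ to the Hermitian splitting $\widetilde R_{n,t}+\widetilde N_{n,t}$: the eigenvalue interlacing and stability inequalities genuinely require both summands to be Hermitian, and this is precisely what the spectral-truncation step above secures. It is a standard device in the Hermitian a.c.s.\ theory, and once it is in place the whole argument is a mechanical transcription of the singular-value case.
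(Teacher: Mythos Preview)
Your proposal is correct and follows exactly the approach the paper indicates: the paper does not give a separate proof of Theorem~\ref{main_eig} but simply states that ``the proof is an eigenvalue version of the proof of Theorem~\ref{main_sv}: the steps are identical and we leave them to the interested reader.'' Your write-up carries out precisely this transcription, and in fact goes further than the paper by explicitly addressing the one technical point the paper glosses over, namely that the a.c.s.\ decomposition $R_{n,t}+N_{n,t}$ need not consist of Hermitian summands, so that eigenvalue interlacing cannot be applied directly; your spectral-truncation device for the Hermitian difference $E_{n,t}=A_n-B_{n,t}$ is the standard and correct fix.
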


\subsection{Generalized Approximating class of sequences}\label{ssec:gacs}

In this section, we introduce a new tool inspired by the a.c.s. theory and by the extradimensional
approach (see \cite{ratn1}, Section 4.1 and Section 4.2), but more flexible and able to handle all the parameters needed in order to study discretizations on unbounded domains of finite measure, or moving domains. As for the classical a.c.s. tool, also the notion of generalized a.c.s. carries information about spectral and singular value distribution, which is what we prove in Theorem \ref{gacs_sing_val}. Hereafter, we focus on domains with finite measure, which will be the ones treated in the numerical tests.

\begin{defn}\label{def_gacs}
Let $\{A_{n}\}_n$ be a matrix-sequence of size $d_n \times d_n'$ with $ d_n $ and $ d_n' $ monotonically increasing integer sequences and let $\left\{\{B_{n,t}\}_{n}\right\}_{t}$ be a sequence of matrix-sequences of size $d_{n,t} \times d_{n,t}'$. We say that $\left\{\{B_{n,t}\}_{n}\right\}_{t}$ is a generalized approximating class of sequences (g.a.c.s.) for $\{A_{n}\}_n$
if, for every $t$, there exists $n_t$ such that, for $n>n_t$,
\begin{equation*}
	A_n =  U_{n,t} \left( B_{n,t}\oplus0_{n,t}\right)V_{n,t} + R_{n,t} + N_{n,t},
\end{equation*}
where $0_{n,t}$ is the null matrix of size $\left(d_n-d_{n,t}\right) \times \left(d_n'-d_{n,t}\right)$, $U_{n,t}$ and $V_{n,t}$ are two unitary matrices of order $d_n \times d_n$ and $d_n' \times d_n'$ respectively and $R_{n,t}, N_{n,t}$ are matrices of the same size of $A_n$, satisfying:
\begin{equation*}
	\textnormal{rank}\left(R_{n,t}\right) \leq c(t) d_n \wedge d_n',
\end{equation*}
\begin{equation*}
	\left\|N_{n,t}\right\|\leq \omega(t),
\end{equation*}
\begin{equation*}
	d_n \wedge d_n'-d_{n,t} \wedge d_{n,t}'=:m_{n,t} \leq m(t)d_n \wedge d_n',
\end{equation*}
\begin{equation*}
	\lim_{t \to \infty} c(t) =\lim_{t \to \infty} \omega(t) = \lim_{t \to \infty} m(t) =0.
\end{equation*}
If we have a sequence $\{A_{n}\}_n$	of square Hermitian matrices, we also ask $\left\{\{B_{n,t}\}_{n}\right\}_{t}$ to be square Hermitian and $V_{n,t}=U_{n,t}^*$ for all $n$ and $t$.
\end{defn}

\begin{rem}
The notion of g.a.c.s. is intended as a generalization of the idea of a.c.s. being more natural when dealing with the approximation of infinite dimensional operators over moving or unbounded domains. In particular, in the case of a discretization of a (fractional) differential operator  over moving or unbounded domains we usually end up with a sequence of domains (either a precompact exhaustion of the unbounded domain or the sequence of moving domains or a combination of both). Using the same approximation procedure (Finite Differences, Finite Elements, Isogeometric Analysis, Finite Volumes etc), it is natural to obtain matrices of different dimensions and g.a.c.s. is intended as a tool for dealing with this difficulty in the spirit of the extradimensional approach used in Sections 4.1, 4.2 in \cite{ratn1}.
\end{rem}

\begin{thm}\label{gacs_sing_val}
	Let $\{A_n\}$ be a matrix-sequence of size $ d_n \times d_n' $ with $ d_n $ and $ d_n' $ monotonically increasing integer sequences. Let $\left\{\{B_{n,t}\}_{n}\right\}_{t}$ be a g.a.c.s. for $\{A_n\}$. If, $\forall t$, $\exists\left(f_t,D_t\right)$ such that: \\
	\begin{itemize}
		\item $\left\{\{B_{n,t}\}_{n}\right\}_{t} \sim_{\sigma} (f_t,D_t)$,\\
		\item $D_t \subset D_{t+1}$, $\forall t$,\\
		\item $D := \bigcup\limits_{t>0} D_t$ of finite measure,\\
		\item $\exists f: D\to\mathbb{C}^{p_1 \times p_2}
		$, $p_1,p_2\ge 1$, measurable such that $f^{E}_t\to f$ in measure, $t\to +\infty$, with $p=p_1 \wedge p_2$ and
	\end{itemize}
	\begin{equation*}
	f_t^E= \begin{cases}
	f_t(s) & \text{if } s \in D_t \\
	0 & \text{if } s \in D\setminus D_t,
	\end{cases}
	\end{equation*}
	then $\{A_n\} \sim_{\sigma} (f,D)$, i.e.
	\begin{equation}\label{limit_sing}
		\lim_{n \to \infty}\sum_{j=1}^{d_n \wedge d_n'} \frac{F(\sigma_j(A_n))}{d_n \wedge d_n'} = \frac{1}{\mu(D)} \int_{D} \sum_{k=1}^{p} \frac{F(\sigma_k(f(s)))}{p} ds, \qquad\forall F\in C_c(\mathbb{R})
	\end{equation}
\end{thm}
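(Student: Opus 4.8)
The plan is to reduce the g.a.c.s.\ hypothesis to an ordinary a.c.s.\ by absorbing the unitary factors and the extra null block into new approximating matrix-sequences, and then to rerun the argument of Theorem~\ref{main_sv} while tracking one additional small parameter. First I would set $C_{n,t} := U_{n,t}\left(B_{n,t}\oplus 0_{n,t}\right)V_{n,t}$, which has size $d_n\times d_n'$ for every $n$ and $t$; then the g.a.c.s.\ identity becomes $A_n = C_{n,t} + R_{n,t} + N_{n,t}$ with $\mathrm{rank}(R_{n,t})\le c(t)(d_n\wedge d_n')$ and $\|N_{n,t}\|\le\omega(t)$, so $\left\{\{C_{n,t}\}_n\right\}_t$ is literally an a.c.s.\ for $\{A_n\}$. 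Since singular values are invariant under left and right multiplication by unitary matrices, the multiset of the $d_n\wedge d_n'$ singular values of $C_{n,t}$ is exactly the $d_{n,t}\wedge d_{n,t}'$ singular values of $B_{n,t}$ together with $m_{n,t}=(d_n\wedge d_n')-(d_{n,t}\wedge d_{n,t}')$ extra zeros.

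Next I would show that $\{C_{n,t}\}_n$ is ``almost'' distributed as $(f_t,D_t)$. Writing $r_{n,t}=m_{n,t}/(d_n\wedge d_n')\in[0,m(t)]$, for every $F\in C_c(\mathbb{R})$ one has
\[
\frac{1}{d_n\wedge d_n'}\sum_{j=1}^{d_n\wedge d_n'}F(\sigma_j(C_{n,t})) = (1-r_{n,t})\,\frac{1}{d_{n,t}\wedge d_{n,t}'}\sum_{j=1}^{d_{n,t}\wedge d_{n,t}'}F(\sigma_j(B_{n,t})) + r_{n,t}\,F(0),
\]
and since $m(t)\to 0$ forces $d_{n,t}\wedge d_{n,t}'\to\infty$ as $n\to\infty$ (for $t$ large), the hypothesis $\{B_{n,t}\}_n\sim_\sigma(f_t,D_t)$ gives $\limsup_{n\to\infty}\bigl|\frac{1}{d_n\wedge d_n'}\sum_j F(\sigma_j(C_{n,t}))-\alpha_t(F)\bigr|\le 2\,m(t)\,\|F\|_\infty$, where $\alpha_t(F)=\frac{1}{\mu(D_t)}\int_{D_t}\frac{\sum_k F(\sigma_k(f_t(s)))}{p}\,ds$.

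Now I would repeat the perturbation part of the proof of Theorem~\ref{main_sv} verbatim, with $C_{n,t}$ in the role of $B_{n,t}$: reduce $F$ to a nonnegative, nondecreasing $C^1$ test function $G$ that is constant outside a compact interval (via the decomposition $F=F^{\uparrow}+F^{\downarrow}$), use the singular value interlacing inequalities for the low-rank-plus-small-norm perturbation $A_n=C_{n,t}+R_{n,t}+N_{n,t}$, and use $G(x\pm\omega(t))\le G(x)+\|G'\|_\infty\,\omega(t)$ to bound $\bigl|\frac{1}{d_n\wedge d_n'}\sum_j G(\sigma_j(A_n)) - \frac{1}{d_n\wedge d_n'}\sum_j G(\sigma_j(C_{n,t}))\bigr|$ by $c(t)\|G\|_\infty+\omega(t)\|G'\|_\infty+o(1)$ as $n\to\infty$. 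Taking $\limsup_{n\to\infty}$ and combining with the previous step yields $\limsup_{n\to\infty}\bigl|\frac{1}{d_n\wedge d_n'}\sum_j G(\sigma_j(A_n))-\alpha_t(G)\bigr|\le 2m(t)\|G\|_\infty+c(t)\|G\|_\infty+\omega(t)\|G'\|_\infty$; letting $t\to\infty$ (using $c(t),\omega(t),m(t)\to 0$) and then undoing the reduction to $G$ by linearity gives $\lim_{n\to\infty}\frac{1}{d_n\wedge d_n'}\sum_j F(\sigma_j(A_n))=\lim_{t\to\infty}\alpha_t(F)$ for every $F\in C_c(\mathbb{R})$, once the latter limit is shown to exist.

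Finally, to show $\lim_{t\to\infty}\alpha_t(F)=\frac{1}{\mu(D)}\int_D\frac{\sum_k F(\sigma_k(f(s)))}{p}\,ds$, I would argue exactly as in the ``in addition'' part of the proof of Theorem~\ref{main_sv}: the sorted singular values and $F$ are continuous, so $f_t^{E}\to f$ in measure implies $F(\sigma_k(f_t^E))\to F(\sigma_k(f))$ in measure on $D$; these functions are uniformly bounded by $\|F\|_\infty$ and $\mu(D)<\infty$, so bounded convergence gives $\int_D F(\sigma_k(f_t^E))\to\int_D F(\sigma_k(f))$; then $\int_{D_t}F(\sigma_k(f_t)) = \int_D F(\sigma_k(f_t^E)) - F(0)\,\mu(D\setminus D_t)$ with $\mu(D\setminus D_t)\to 0$, together with $\mu(D_t)\to\mu(D)$, yields the claim, which is \eqref{limit_sing}. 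The work here is not conceptual but organizational: keeping the three small parameters separated --- $c(t)$ governs the index shift in the interlacing, $\omega(t)$ the argument shift, and $m(t)$ the size mismatch hidden inside $C_{n,t}$ --- and respecting the order of limits ($n\to\infty$ first, then $t\to\infty$) throughout.
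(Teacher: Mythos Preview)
Your proposal is correct and follows essentially the same approach as the paper: both define $C_{n,t}=U_{n,t}(B_{n,t}\oplus 0_{n,t})V_{n,t}$, exploit unitary invariance of singular values to identify the singular values of $C_{n,t}$ with those of $B_{n,t}$ plus $m_{n,t}$ zeros, reduce to monotone $C^1$ test functions $G$, apply the rank/norm interlacing inequalities to control the passage from $A_n$ to $C_{n,t}$, and handle the continuous side via $\mu(D_t)\to\mu(D)$ and $f_t^E\to f$ in measure. The only difference is organizational: you package the argument as ``$\{C_{n,t}\}$ is an a.c.s.\ for $\{A_n\}$ with an almost-distribution up to error $O(m(t))$, then rerun Theorem~\ref{main_sv}'', whereas the paper writes out the full chain of inequalities inline and treats the $\limsup$ and $\liminf$ directions separately rather than bounding the absolute difference directly.
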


\begin{proof}
	First, we note that relation \eqref{limit_sing} is true if and only if it is true for any $ F $ in $ C^{1} $ with bounded support by simple functional approximation techniques. At this point, taking $ F=F^\uparrow +  F^\downarrow $, with $ F^\uparrow(x) = \int_{-\infty}^{x} (F')^+(t) dt $ and $ F^\downarrow (x) = \int_{-\infty}^{x} (F')^-(t) dt$, where $ g^+ =\max(g, 0) $ and $ g^-=\max(-g,0) $, we deduce by linearity that \eqref{limit_sing} is true if and only if it holds for every $ G $ continuous, differentiable (with $ G' $ nonnegative), being equal to $ 0 $ for every $ x \leq c_1 $ and being constant (equal to $ \|G\|_{\infty} $) for every $ x \geq c_2 $.
	
	Since, by hypotesis, $\left\{\{B_{n,t}\}_{n}\right\}_{t} \sim_{\sigma} (f_t,D_t)$, $D_t\subset D_{t+1}$ and $D := \bigcup\limits_{t>0} D_t$ is of finite measure, we have:
	\begin{equation*}
		0 < \mu(D_t)\leq\mu(D_{t+1})\to\mu(D)<+\infty, \qquad t\to+\infty
	\end{equation*}
	and, since 	$f^{E}_t\to f$ in measure, $t\to +\infty$, we also have
	\begin{equation*}
		\sigma_k(f^{E}_t) \to \sigma_k(f) \quad\text{in measure } \qquad\forall 1\leq k\leq p
	\end{equation*}	
	hence,
	\begin{equation*}
		\lim_{t \to \infty}\frac{1}{\mu(D_t)}\int_{D_t} \sum_{k=1}^{p} \frac{G(\sigma_k(f_t(s)))}{p} ds = \lim_{t \to \infty}\frac{\mu(D)}{\mu(D_t)}\frac{1}{\mu(D)}\big(\int_{D} \sum_{k=1}^{p} \frac{G(\sigma_k(f^{E}_t(s)))}{p} ds - G(0)\mu(D\setminus D_t)\big) =
	\end{equation*}
	\begin{equation}\label{cont_side}
		 = \frac{1}{\mu(D)} \int_{D} \sum_{k=1}^{p} \frac{G(\sigma_k(f(s)))}{p} ds
	\end{equation}\\
	Now, since $\forall n>n_t$ $A_n =  U_{n,t} \left( B_{n,t}\oplus0_{n,t}\right)V_{n,t} + R_{n,t} + N_{n,t}$, we set $C_{n,t}=U_{n,t} \left( B_{n,t}\oplus 0_{n,t}\right)V_{n,t}$ and we notice that the singular values of $C_{n,t}$ are the singular values of $B_{n,t}$ with $m_{n,t} = d_n \wedge d_n'-d_{n,t} \wedge d_{n,t}'$ additional singular values equal to zero. By classic results of interlacing of singular values under rank and norm corrections, we have
	\begin{equation*}
		\sigma_j(A_n) = \sigma_j(C_{n,t}+ R_{n,t} + N_{n,t})\leq\sigma_j(C_{n,t}+ R_{n,t}) + \omega(t)\leq\sigma_{j+(d_n \wedge d_n')c(t)}(C_{n,t}) + \omega(t)
	\end{equation*}\\
	where $\sigma_{j}(C_{n,t}):=+\infty$ for $j > d_n \wedge d_n'$.
	On the basis of the initial discussion, we take $ G $ differentiable, monotone non-decreasing, positive and bounded (so $\left\|G\right\|_{\infty} = G(+\infty)$) and we have:
	\begin{align}\label{part1}
		\sum_{j=1}^{d_n \wedge d_n'} \frac{G(\sigma_j(A_n))}{d_n \wedge d_n'}=\sum_{j=1}^{d_n \wedge d_n'} \frac{G(\sigma_j(C_{n,t}+ R_{n,t} + N_{n,t}))}{d_n \wedge d_n'}\leq
	\end{align}
	\begin{align*}
		\leq\sum_{j=1}^{d_n \wedge d_n'} \frac{G(\sigma_{j+(d_n \wedge d_n')c(t)}(C_{n,t}) + \omega(t))}{d_n \wedge d_n'}\leq\sum_{j=1}^{d_n \wedge d_n'} \frac{G(\sigma_{j+(d_n \wedge d_n')c(t)}(C_{n,t}))}{d_n \wedge d_n'}+\omega(t)\left\|G'\right\|_{\infty}\leq\\
		\leq\sum_{j=1+(d_n \wedge d_n')c(t)}^{d_n \wedge d_n'} \frac{G(\sigma_{j}(C_{n,t}))}{d_n \wedge d_n'}+c(t)\left\|G\right\|_{\infty}+\omega(t)\left\|G'\right\|_{\infty}\leq\sum_{j=1}^{d_n \wedge d_n'} \frac{G(\sigma_{j}(C_{n,t}))}{d_n \wedge d_n'}+c(t)\left\|G\right\|_{\infty}+\omega(t)\left\|G'\right\|_{\infty}
	\end{align*}\\
	Taking into account the information about the singular values of $C_{n,t}$, it follows that:\\
	\begin{equation}\label{part2}
		\sum_{j=1}^{d_n \wedge d_n'} \frac{G(\sigma_{j}(C_{n,t}))}{d_n \wedge d_n'}=\frac{d_{n,t} \wedge d_{n,t}'}{d_n \wedge d_n'}\sum_{j=1}^{d_{n,t} \wedge d_{n,t}'} \frac{G(\sigma_{j}(B_{n,t}))}{d_{n,t} \wedge d_{n,t}'} + \frac{m_{n,t}}{d_n \wedge d_n'}G(0)\leq
	\end{equation}
	\begin{equation*}
		\leq\sum_{j=1}^{d_{n,t} \wedge d_{n,t}'} \frac{G(\sigma_{j}(B_{n,t}))}{d_{n,t} \wedge d_{n,t}'} + m(t)\left\|G\right\|_{\infty}
	\end{equation*}
	Now, given $\epsilon>0$, we choose $t$ large enough such that $\omega(t),c(t),m(t)<\epsilon$ and also, by (\ref{cont_side}):
	\begin{equation}\label{part3}
		\frac{1}{\mu(D_t)}\int_{D_t} \sum_{k=1}^{p} \frac{G(\sigma_k(f_t(s)))}{p} ds\leq \frac{1}{\mu(D)} \int_{D} \sum_{k=1}^{p} \frac{G(\sigma_k(f(s)))}{p} ds + \epsilon
	\end{equation}
	Once $t$ is fixed, using that $\left\{\{B_{n,t}\}_{n}\right\}_{t} \sim_{\sigma} (f_t,D_t)$, we can find $N_{\epsilon}>n_t$ such that $\forall n>N_{\epsilon}$ we have:
	\begin{equation}\label{part4}
		\sum_{j=1}^{d_{n,t} \wedge d_{n,t}'} \frac{G(\sigma_{j}(B_{n,t}))}{d_{n,t} \wedge d_{n,t}'}\leq\frac{1}{\mu(D_t)}\int_{D_t} \sum_{k=1}^{p} \frac{G(\sigma_k(f_t(s)))}{p} ds + \epsilon
	\end{equation}
	Finally, using the derivations in (\ref{part1}),(\ref{part2}),(\ref{part3}) and (\ref{part4}), $\forall n>N_{\epsilon}$ we get:
	\begin{equation*}
		\sum_{j=1}^{d_n \wedge d_n'} \frac{G(\sigma_j(A_n))}{d_n \wedge d_n'}\leq\sum_{j=1}^{d_n \wedge d_n'} \frac{G(\sigma_{j}(C_{n,t}))}{d_n \wedge d_n'}+c(t)\left\|G\right\|_{\infty}+\omega(t)\left\|G'\right\|_{\infty}\leq
	\end{equation*}
	\begin{equation*}
		\leq\sum_{j=1}^{d_{n,t} \wedge d_{n,t}'} \frac{G(\sigma_{j}(B_{n,t}))}{d_{n,t} \wedge d_{n,t}'} + (m(t)+c(t))\left\|G\right\|_{\infty}+\omega(t)\left\|G'\right\|_{\infty}\leq
	\end{equation*}
	\begin{equation*}
		\leq\frac{1}{\mu(D_t)}\int_{D_t} \sum_{k=1}^{p} \frac{G(\sigma_k(f_t(s)))}{p} ds + (1+2\left\|G\right\|_{\infty}+\left\|G'\right\|_{\infty})\epsilon\leq
	\end{equation*}
	\begin{equation*}
		\leq\frac{1}{\mu(D)} \int_{D} \sum_{k=1}^{p} \frac{G(\sigma_k(f(s)))}{p} ds + (2+2\left\|G\right\|_{\infty}+\left\|G'\right\|_{\infty})\epsilon
	\end{equation*}
	Since $\epsilon$ is arbitrary, we get
	\begin{equation*}
		\limsup_{n \to \infty}\sum_{j=1}^{d_n \wedge d_n'} \frac{G(\sigma_j(A_n))}{d_n \wedge d_n'}\leq\frac{1}{\mu(D)} \int_{D} \sum_{k=1}^{p} \frac{G(\sigma_k(f(s)))}{p} ds
	\end{equation*}
	In a similar we can obtain also that
	\begin{equation*}
		\liminf_{n \to \infty}\sum_{j=1}^{d_n \wedge d_n'} \frac{G(\sigma_j(A_n))}{d_n \wedge d_n'}\geq\frac{1}{\mu(D)} \int_{D} \sum_{k=1}^{p} \frac{G(\sigma_k(f(s)))}{p} ds
	\end{equation*}
	we just need to be to be a bit more careful in two of the passages, when, using that G is positive, we added terms in the sum in (\ref{part1}) and used that $d_{n,t}\wedge d_{n,t}'\leq d_n\wedge d_n'$ in (\ref{part2}). Nevertheless, those two additional terms of correction depend on $G$, $c(t)$ and $m(t)$ and can be controlled similarly to those coming from limsup derivations. This ends the proof.
	
\end{proof}

Under the hypothesis that the sequences are Hermitian, we can extend the previous result to the case of eigenvalues (with essentially the same proof), getting the following theorem.

\begin{thm}\label{gacs_eigen}
	Let $\{A_n\}$ be a sequence of Hermitian matrices of size $ d_n$ with $ d_n $ a monotonically increasing integer sequence. Let $\left\{\{B_{n,t}\}_{n}\right\}_{t}$ be a g.a.c.s. for $\{A_n\}$ (in the Hermitian sense). If, $\forall t$, $\exists\left(f_t,D_t\right)$ such that: \\
	\begin{itemize}
		\item $\left\{\{B_{n,t}\}_{n}\right\}_{t} \sim_{\lambda} (f_t,D_t)$,\\
		\item $D_t\subset D_{t+1}$, $\forall t$,\\
		\item $D := \bigcup\limits_{t>0}D_t$ of finite measure,\\
		\item $\exists f:D\to\mathbb{C}^{p \times p}$ measurable such that $f^{E}_t\to f$ in measure, $t\to +\infty$, with
	\end{itemize}
	\begin{equation*}
	f_t^E= \begin{cases}
	f_t(s) & \text{if } s \in D_t \\
	0 & \text{if } s \in D\setminus D_t,
	\end{cases}
	\end{equation*}
	then $\{A_n\} \sim_{\lambda} (f,D)$.
\end{thm}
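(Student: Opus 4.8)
The plan is to rerun the proof of Theorem~\ref{gacs_sing_val} almost verbatim, with singular values replaced by eigenvalues, now exploiting that all the matrices involved (and, by convergence in measure, $f_t$ and $f$) are Hermitian-valued. The only structural changes are: $d_n=d_n'$ throughout, so $\mathrm{rank}(R_{n,t})\le c(t)d_n$ and $m_{n,t}=d_n-d_{n,t}\le m(t)d_n$; the $m_{n,t}$ eigenvalues introduced by the null block $0_{n,t}$ are all equal to $0$ but, unlike for singular values, need not sit at an extreme of the spectrum; and singular value interlacing under rank/norm corrections is replaced by the classical Weyl-type interlacing inequalities for additive Hermitian perturbations. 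As in Theorem~\ref{gacs_sing_val}, by a density argument it suffices to prove the conclusion for $F\in C^1$ with bounded support, and then, writing $F$ as a linear combination of non-decreasing, nonnegative $C^1$ functions that vanish on $(-\infty,c_1]$ and are constant ($=\|G\|_\infty$) on $[c_2,+\infty)$, for a single such $G$.

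Fix $t$ and set $C_{n,t}:=U_{n,t}\bigl(B_{n,t}\oplus 0_{n,t}\bigr)U_{n,t}^*$, a Hermitian matrix of order $d_n$ whose spectrum is the spectrum of $B_{n,t}$ together with $m_{n,t}$ zeros. From $A_n=C_{n,t}+R_{n,t}+N_{n,t}$ with $R_{n,t}=R_{n,t}^*$, $\mathrm{rank}(R_{n,t})\le c(t)d_n$, $N_{n,t}=N_{n,t}^*$, $\|N_{n,t}\|\le\omega(t)$, the interlacing/Weyl inequalities give, with the convention that out-of-range eigenvalue indices yield $\pm\infty$ (so that $G(\pm\infty)\in\{\|G\|_\infty,0\}$),
\[
\lambda_{j+\lceil c(t)d_n\rceil}(C_{n,t})-\omega(t)\ \le\ \lambda_j(A_n)\ \le\ \lambda_{j-\lceil c(t)d_n\rceil}(C_{n,t})+\omega(t).
\]
Applying $G$, using $|G(x\pm\omega(t))-G(x)|\le\|G'\|_\infty\omega(t)$, re-indexing the sum, and using $0\le G\le\|G\|_\infty$ to restore the finitely many displaced terms, one gets
\[
\left|\ \frac1{d_n}\sum_{j=1}^{d_n}G(\lambda_j(A_n))-\frac1{d_n}\sum_{j=1}^{d_n}G(\lambda_j(C_{n,t}))\ \right|\ \le\ c(t)\|G\|_\infty+\omega(t)\|G'\|_\infty+o(1)
\]
as $n\to\infty$. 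Next, since the spectrum of $C_{n,t}$ is that of $B_{n,t}$ plus $m_{n,t}$ zeros,
\[
\frac1{d_n}\sum_{j=1}^{d_n}G(\lambda_j(C_{n,t}))=\frac{d_{n,t}}{d_n}\cdot\frac1{d_{n,t}}\sum_{j=1}^{d_{n,t}}G(\lambda_j(B_{n,t}))+\frac{m_{n,t}}{d_n}\,G(0),
\]
and, since $0\le m_{n,t}/d_n\le m(t)$ and $0\le G\le\|G\|_\infty$, the right-hand side differs from $\frac1{d_{n,t}}\sum_j G(\lambda_j(B_{n,t}))$ by at most $m(t)\|G\|_\infty$.

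Now let $n\to\infty$ with $t$ fixed: by $\{B_{n,t}\}_n\sim_\lambda(f_t,D_t)$ (extended, as usual, from $C_c$ to bounded continuous test functions, which is legitimate because $\sim_\lambda$ prevents a positive fraction of the eigenvalues from escaping to infinity), $\frac1{d_{n,t}}\sum_j G(\lambda_j(B_{n,t}))\to\alpha_t(G):=\frac1{\mu(D_t)}\int_{D_t}\frac1p\sum_{k=1}^p G(\lambda_k(f_t(s)))\,ds$, so that $\liminf_n$ and $\limsup_n$ of $\frac1{d_n}\sum_j G(\lambda_j(A_n))$ both lie in $[\alpha_t(G)-\eta(t),\alpha_t(G)+\eta(t)]$ with $\eta(t):=(c(t)+m(t))\|G\|_\infty+\omega(t)\|G'\|_\infty$. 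Letting $t\to\infty$ kills $\eta(t)$, while, exactly as in Theorem~\ref{gacs_sing_val}, $\mu(D_t)\to\mu(D)<\infty$, $\mu(D\setminus D_t)\to0$, and $f_t^E\to f$ in measure forces $\lambda_k(f_t^E)\to\lambda_k(f)$ in measure for every $k$ (the ordered eigenvalues of a Hermitian matrix being $1$-Lipschitz in the matrix); hence, by dominated convergence on the finite-measure set $D$ with a bounded integrand,
\[
\alpha_t(G)\ \longrightarrow\ \frac1{\mu(D)}\int_D\frac1p\sum_{k=1}^p G(\lambda_k(f(s)))\,ds,
\]
which is the claimed value, and undoing the Step-one reductions yields $\{A_n\}\sim_\lambda(f,D)$. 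I expect the only genuinely delicate point to be the bookkeeping in the middle paragraph: verifying that the spurious zeros of the null block and the indices displaced by the rank correction together cost only $O\bigl((c(t)+m(t))\|G\|_\infty\bigr)$ \emph{uniformly in $n$}. This is precisely where positivity and boundedness of $G$ (i.e.\ the Step-one reduction) are indispensable, since, in contrast with singular values, the extra zero eigenvalues cannot simply be peeled off from one end of the spectrum.
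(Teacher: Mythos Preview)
Your proposal is correct and follows exactly the route the paper indicates: it reruns the proof of Theorem~\ref{gacs_sing_val} with eigenvalues in place of singular values, using the Hermitian g.a.c.s.\ structure ($V_{n,t}=U_{n,t}^*$), the Weyl interlacing inequalities for low-rank and norm-small Hermitian perturbations in place of the singular value perturbation bounds, and the same splitting of $\sum G(\lambda_j(C_{n,t}))$ into the $B_{n,t}$-part and the $m_{n,t}$ zeros. The paper itself gives no independent argument for Theorem~\ref{gacs_eigen}, stating only that it has ``essentially the same proof'' as Theorem~\ref{gacs_sing_val}; your write-up is a faithful and slightly more explicit version of that, including the passage $\alpha_t(G)\to\alpha(G)$ via $\mu(D_t)\to\mu(D)$, $\mu(D\setminus D_t)\to0$ and convergence in measure of the eigenvalue functions, which mirrors the paper's display~(\ref{cont_side}). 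Your closing caveat about the zero eigenvalues is harmless but somewhat overstated: the identity $\frac{1}{d_n}\sum_j G(\lambda_j(C_{n,t}))=\frac{d_{n,t}}{d_n}\cdot\frac{1}{d_{n,t}}\sum_j G(\lambda_j(B_{n,t}))+\frac{m_{n,t}}{d_n}G(0)$ holds regardless of where the zeros sit in the spectrum, exactly as in the singular value case.
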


\begin{rem}
In the case of domains of infinite measure we expect that a similar extension as the one obtained for a.c.s. can be obtained. We do not go into detail at the moment as we want to focus mainly on unbounded domains with finite measure, leaving this possible extension for future developments.
At any rate, as Theorem \ref{main_sv} and Theorem \ref{main_eig} show, the infinite measure setting can be handled: however, we still need to make a fine tuning of the new concepts in order to work in the best possible way.
\end{rem}

\section{Applications, examples, numerical evidences}\label{sec:applied-numerical}

In this section we show a few basic numerical examples for proving the validity of the theory developed in the previous section. We start with a kind of trivial one in a bounded setting and then explore a more involved one where the domain is unbounded with finite measure and hence, of course, of non-Cartesian type: in this case we consider both $P_1$ and $P_2$ {Finite Elements} and as operator both the constant coefficient and the variable coefficient Laplacian.

\subsection{Approximating by Finite Elements over an exhaustion of (0,1)}

We consider the following model problem

\begin{equation*}
\begin{cases}
-\Delta u = f & \text{ on } (0,1)\\
u(0)=u(1)=0.
\end{cases}
\end{equation*}

It is probably the most studied test example and it is already known that its $P_1$ Finite Elements approximation \cite{Cia} on a uniform grid with a proper scaling leads to the solution of a linear system whose coefficient matrix is $A_n=T_n(2-2\cos(\theta))$. As widely known related sequence $ \{ A_n \}_n $ has a spectral distribution given by the function $ 2-2\cos(\theta)$ in $ [-\pi,\pi] $: it is indeed a special case of Theorem \ref{thm:symbol_d-block-toeplitz} since its symbol is real-valued. We can see this also by applying linear Finite Elements to the problem

\begin{equation*}
\begin{cases}
-\Delta u = f & \text{ on } (0,1-1/t)\\
u(0)=u(1-1/t)=0.
\end{cases}
\end{equation*}

In this case the spectral distribution of the associated matrix sequence $ \{B_{n,t}\} $ is given by the symbol function $ (1-1/t)(2-2\cos(\theta)) $ for every $ t > 0 $. Clearly, all the assumption of Theorem \ref{main_eig} are satisfied, hence the sequence $ \{ A_n\}_n $ has spectral distribution with symbol given by $ \lim_{t \to \infty} (1-1/t)(2-2\cos(\theta)) = 2 - 2 \cos(\theta) $.

\begin{rem}
We note that one of the main aspects that allows us to infer the spectral distribution of the sequence $ \{ A_n \}_n $ is the ``convergence" of the points in the discretization  defining $ \{\{ B_{n,t} \}\} $ (and of the related domain) to those defining $ \{ A_n \} $ (and of the limit domain) as $ t \to \infty $. In fact, this is what makes $ \{\{ B_{n,t} \}\} $ an a.c.s. for $ \{ A_n\} $.
\end{rem}

\subsection{Quadratic Finite Elements for the model problem on a two-dimensional unbounded set of finite measure}

We consider the following problem

\begin{equation}\label{prob_unbounded}
\begin{cases}
-\Delta u = v & \text{ on } \Omega\\
u=0 & \text{ on } \partial \Omega,
\end{cases}
\end{equation}
where $ \Omega = \{ (x,y) \in \mathbb{R} \, : \, x>0, y > 0 \text{ and } y < g(x) \} $ with
\begin{equation*}
g(x)= \begin{cases}
1 & x < 1\\
\frac{1}{x^2} & x \geq 1.
\end{cases}
\end{equation*}
It is clear that $ \mu(\Omega) = \int_{0}^{\infty} g(x) dx=2 < \infty $.  For the discretization, we consider as space-step $ h=\frac{1}{n+1} $ and the nodes $ (x_i,y_j)=(ih,jh) $. We consider the maximum value of $ i $,  $ \bar{i} $, for which there exists $ j $ such that $ (x_{\bar{i}},y_j) \in \Omega $. We consider all the points in the rectangle $ (0,\bar{i}) \times (0,1) $. Note that the index $ \bar{i} = \lfloor \sqrt{n+1} \rfloor $.\\
On these points we apply quadratic $P_2$ Finite Elements which leads to a two-level Toeplitz matrix $ T_{\mathbf{n}}(f) $ with $ \mathbf{n}=(n,n \lfloor \sqrt{n+1} \rfloor) $ and $f=f_{{P}_2}: [-\pi,\pi]^2 \longrightarrow \mathbb{C}^{4\times 4}$ (see \cite{P_k-elements}) with
\begin{equation}\label{eq:P2symbol}
f=f_{{P}_2}(\theta_{1},\theta_{2})  =  \left [
\begin{array}{rr|rr}
\alpha                 & -\beta(1+e^{\i \theta_{1}})  & -\beta(1+e^{\i \theta_{2}})  & 0 \\
-\beta(1+e^{-\i \theta_{1}}) & \alpha                 & 0                      & -\beta(1+e^{\i \theta_{2}})\\
\hline
-\beta(1+e^{-\i \theta_{2}}) & 0                     & \alpha                 & -\beta(1+e^{\i \theta_{1}})\\
0                      & -\beta(1+e^{-\i \theta_{2}}) & -\beta(1+e^{-\i \theta_{1}}) & \gamma +\frac{\beta}{2} (\cos(\theta_{1})+\cos(\theta_{2}))\\
\end{array}
\right ]
\end{equation}
where $\alpha={16}/{3}$, $\beta={4}/{3}$, and $\gamma=4$.\\

 Clearly, this sequence has spectral distribution given by $ f $ on the domain $ [-\pi, \pi]^2 $ by Theorem \ref{thm:symbol_d-block-toeplitz} since its symbol is Hermitian-valued: indeed given the even nature of the symbol the domain can be reduce to the subdomain $ [0, \pi]^2 $.
 We now consider the points $ (x_i, y_j) \in \Omega $ and the points $ (x_i,y_j) \in \Omega_t= \Omega \cap B(0,t)^{\|\cdot\|_{\infty}} $.
 As $ h \to 0 $ we obtain uniform discretizations both for the set $ \Omega $ and for the sets $ \Omega_t $, which constitute an exhaustion of the domain $ \Omega $. Finally, we can obtain a discrete version $ A_{\mathbf{n}} $ of the Laplacian on $ \Omega $ by simply cutting from the matrix $ T_{\mathbf{n}}(f) $ all the rows and columns corresponding to indices $ (i,j) $ such that $ (x_i,y_j) $ are not in $ \Omega $. We can do the same on $ \Omega_t $ obtaining another sequence of matrices $ \{\{ C_{\mathbf{n},t} \} \} $. In general $ \dim(A_{\mathbf{n}}) \neq \dim(C_{{\mathbf{n}},t}) $. We then consider the sequence $\{ \{ B_{\mathbf{n},t} \}\} $, where $ B_{\mathbf{n},t}=C_{\mathbf{n},t} \oplus 0_{m}$ and $ m $ is such that $ \dim(B_{\mathbf{n},t})=\dim(A_\mathbf{n}) $. We want to prove that
\begin{itemize}
\item[a)] there exists $ f_t $ such that $ \{ B_{\mathbf{n},t} \} \sim_{\lambda, \sigma} (f_t, \Omega \times [-\pi,\pi]^2) $.
\item[b)] $ \{ B_{\mathbf{n},t} \} $ is an a.c.s. for a matrix-sequence $ \{ \tilde{A}_{\bfnn} \} $ where $ \tilde{A}_{\bfnn} $ is similar to $ A_\mathbf{n} $ for every $ \bfnn $ using a unique permutation matrix.
\item[c)] $\exists \, \lim_{t \to \infty} f_t$ in $ \Omega \times [-\pi,\pi]^2 $.
\end{itemize}
Using Theorem \ref{main_sv} and Theorem \ref{main_eig}, we can conclude that $ \{ A_\mathbf{n} \} \sim_{\lambda, \sigma} (f,\Omega \times [-\pi,\pi]^2) $.\\
For item a) we can simply use the theory of reduced GLT in \cite{Barb}, obtaining that $ \{B_{\mathbf{n},t}\} \sim_{\lambda,\sigma} (f_t,\Omega\times [-\pi,\pi]^2)$, with $ f_t(\mathbf{x},\theta_1,\theta_2)= \chi_{\Omega_t}(\mathbf{x}) \cdot f $, with $f$ depending on the used approximation of the considered PDE and $\mathbf{x}=(x,y)$.\\
For item b) we note that $ \tilde{A}_\mathbf{n} = B_{\mathbf{n},t} + R_{\mathbf{n},t} $ with $ \text{rank} (R_{\mathbf{n},t}) \leq (2-\mu(\Omega_t))n^2 + c(t) n $, where $ c(t) \to 0 $ as $ t \to \infty $. Since $ \mu(\Omega_t) \to 2 $ as $ t \to \infty $, we find that $ \{\{ B_{\mathbf{n},t} \}\} $ is an a.c.s. for $ \{ \tilde{A}_\mathbf{n} \} $.\\
For item c) we have
\begin{equation*}
\lim_{t \to \infty} f_t = f
\end{equation*}
for every $ (\mathbf{x},\theta_1,\theta_2) \in  \Omega \times [-\pi,\pi]^2$.
Using Theorem \ref{main_sv} and Theorem \ref{main_eig}, recalling Definition \ref{def:distributions}, we conclude that the sequence $\{ A_n \}$ has a distribution both in the eigenvalue sense and in the spectral value sense given by $f$ on $\Omega \times [-\pi,\pi]^2$ or equivalently on
$\Omega \times [0,\pi]^2$.

\begin{rem}\label{rem:actors}
Equivalently, we can choose as $ B_{\bfnn,t} $ the matrix defined by $ D_{\bfnn}(\chi_{\Omega_t}) A_\bfnn D_{\bfnn}(\chi_{\Omega_t}) $ and it is possible to see that  the rank of $A_{\bfnn}-B_{\bfnn,t}$ satisfies the same upper estimate as above and $\{B_{\bfnn,t}\}$ is still distributed as $(f_t,\Omega\times [-\pi,\pi]^2)$. Furthermore the set $[-\pi,\pi]^2$ can be replaced by $[0,\pi]^2$ in all cases in which the function is even in
$\theta_1$, $\theta_2$, separately. As already observed in \cite{glt-laa}[pp. 375-378], it is worth recalling that the obtained spectral symbol depends on three actors: the operator order, the coefficients/physical domain, the approximation technique. For instance in our setting the underlying operator has order two and this can be read in the minimal eigenvalue of the symbol which is a asymptotic to $2-2\cos(\theta)$ in one variable and to $4-2\cos(\theta_1)-2\cos(\theta_2)$ in two variables. In both cases the order of the zero is two and this decides the conditioning of the resulting matrices which will grow as $N^{2\over d}$ with $N$ being the matrix-size and $d$ the dimensionality of the domain $\Omega$.
However, there are also other features that can be recovered. Here we mention three of them:
\begin{itemize}
\item the fact that the zero is at zero informs that the subspace related to low frequencies is the one associated with the small eigenvalues,
\item an unbounded diffusion coefficient $a(x,y)$ e.g. with a unique pole at $(\hat x,\hat y)$ of order $\gamma$ will be seen in the maximal eigenvalues exploding as $N^{\gamma\over d}$ and the related subspaces that can be easily identified as a function of the point $(\hat x,\hat y)$: in this case the overall conditioning will be asymptotic to $N^{(2+\gamma)\over d}$.
\item The approximation technique plays a role in the structure on the underlaying matrices and hence in the complexity of the associated matrix-vector product: the more the method is precise the larger is the bandwidth in a multilevel sense, while the number of levels is decided by the dimensionality of the physical domain $\Omega$, and the presence of blocks and their size is exactly the gap between the degree of the polynomials used in the Finite Elements and the global continuity which is imposed (see also the discussion at the end of the introduction).
\end{itemize}
\end{rem}

The remark below concerns the approximation by $Q_2$ Finite Elements, i.e., using rectangles as basic geometric elements instead of triangles. Notice that the curve $y={1 \over x^2}$ for $x\ge 1$ defining the domain is better approximated using triangles instead of rectangles that would give an unpleasant staircase.

\begin{rem}\label{rem:Q2}
Consider the quadratic Finite Elements approximation of the considered one-dimensional model problem. The resulting stiffness matrix is essentially of block Toeplitz type with blocks
$$K_0=\frac13\left[\begin{array}{rr}16 & -8\\ -8 & 14\end{array}\right],\qquad K_1=\frac13\left[\begin{array}{rr}0 & -8\\ 0 & 1\end{array}\right].$$
Similarly, according to \cite{branches-FEM}, the corresponding blocks of the mass matrix are
$$M_0=\frac1{15}\left[\begin{array}{rr}8 & 1\\ 1 & 4\end{array}\right],\qquad M_1=\frac1{30}\left[\begin{array}{rr}0 & 2\\ 0 & -1\end{array}\right].$$
Furthermore, the symbol of the sequence of the stiffness matrices is the function
$$\bff_2(\theta)=\frac13\left[\begin{array}{cc}16 & -8-8\e^{\i\theta}\\ -8-8\e^{-\i\theta} & 14+2\cos\theta\end{array}\right],\quad\theta\in[-\pi,\pi],$$
while the symbol of the sequence of the mass matrices is the function
$$\bfh_2(\theta)=\frac1{15}\left[\begin{array}{cc}8 & 1+\e^{\i\theta}\\ 1+\e^{-\i\theta} & 4-\cos\theta\end{array}\right],\quad\theta\in[-\pi,\pi],$$
again in accordance to \cite{branches-FEM}.
The eigenvalues of $\bff_2(\theta)$,
are
\begin{align*}
\lambda_1(\bff_2(\theta))&=5+\frac13\cos\theta+\frac13\sqrt{129+126\cos\theta+\cos^2\theta},\\[5pt]
\lambda_2(\bff_2(\theta))&=5+\frac13\cos\theta-\frac13\sqrt{129+126\cos\theta+\cos^2\theta}=\frac{16}3\,\frac{2-2\cos\theta}{\lambda_1(\bff_2(\theta))}.
\end{align*}
 Similarly, the eigenvalues of $\bfh_2(\theta)$ are
\begin{align*}
\lambda_1(\bfh_2(\theta))&=\frac25-\frac1{30}\cos\theta+\frac1{30}\sqrt{24+16\cos\theta+\cos^2\theta},\\[5pt]
\lambda_2(\bfh_2(\theta))&=\frac25-\frac1{30}\cos\theta-\frac1{30}\sqrt{24+16\cos\theta+\cos^2\theta}.
\end{align*}
Since the eigenvalue functions $\lambda_i(\bff_2(\theta)),\ i=1,2,$ are even, it follows from Definition \ref{def:distributions} that $\bff_2(\theta)$ restricted to $[0,\pi]$ is still a symbol for $\{K_n^{(2)}\}_n$. As already recalled in the Introduction, the latter is equivalent to write that a suitable ordering of the eigenvalues $\lambda_j(K_n^{(2)}),\ j=1,\ldots,2n-1$, assigned in correspondence with an equispaced grid on $[0,\pi]$, reconstructs approximately the graphs of the eigenvalue functions $\lambda_i(\bff_2(\theta)),\ i=1,2$.
\begin{figure}
\centering
\includegraphics[scale=0.35]{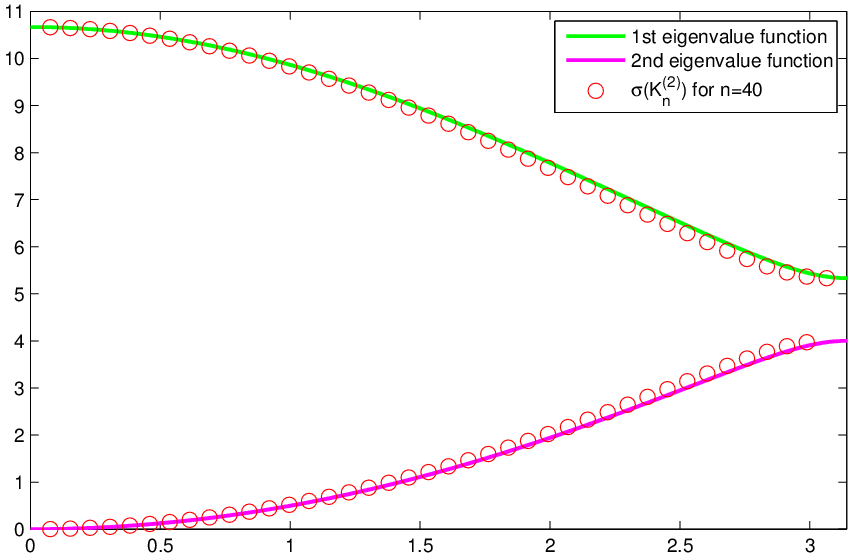}
\caption{Graph over $[0,\pi]$ of the eigenvalue functions $\lambda_i(\bff_2(\theta)),\ i=1,2,$ and of the pairs $\left(\frac{k\pi}{n+1},\lambda_k(K_n^{(2)})\right),\ k=1,\ldots,n,$ and $\left(\frac{(2n-k)\pi}{n+1},\lambda_k(K_n^{(2)}) \right),\ k=n+1,\ldots,2n-1,$ for $n=40$.}
\label{eig_funs_eigs_stiff}
\end{figure}
Setting $K_n^{(2)}$ the stiffness matrix in one dimension, this is observed in Figure \ref{eig_funs_eigs_stiff}, where we fixed the equispaced grid $\frac{k\pi}{n+1},\ k=1,\ldots,n,$ in $[0,\pi]$ and we plotted the eigenvalue functions $\lambda_i(\bff_2(\theta)),\ i=1,2,$ as well as the pairs $\left(\frac{k\pi}{n+1},\lambda_k(K_n^{(2)})\right),\ k=1,\ldots,n,$ and $\left(\frac{(2n-k)\pi}{n+1},\lambda_k(K_n^{(2)})\right),\ k=n+1,\ldots,2n-1,$ for $n=40$. We clearly see from Figure~\ref{eig_funs_eigs_stiff} that the eigenvalues of $K_n^{(2)}$ can be split into two subsets (or branches) of approximately the same cardinality; and the $i$-th branch is approximately given by a uniform sampling over $[0,\pi]$ of the $i$-th eigenvalue function $\lambda_i(\bff_2(\bftheta))$, $i=1,2$.

The spectral symbol in the two-dimensional case is then
\[
f=f_{{Q}_2}(\theta_1,\theta_2)= \bff_2(\theta_1)\bfh_2(\theta_2)+\bfh_2(\theta_1)\bff_2(\theta_2)
 \]
 according to formula (5.1) in \cite{branches-FEM} and with $\bff_2, \bfh_2$ as before. Interestingly enough the dimensionality of the two symbols
$f_{{P}_2},f_{{Q}_2}$ is the same: two variables and matrix-size equal to four. Furthermore in both cases three eigenvalues are structly positive and only one of them is asymptotic to the symbol of the standard discrete Laplacian by Finite Differences or $P_1$ Finite Elements, i.e., $4-2\cos(\theta_1)-2\cos(\theta_2)$: compare \cite{P_k-elements} and \cite{branches-FEM}.

Indeed, in the case of a general PDE with variable coefficients in a $d$-dimensional domains with standard Finite Elements the number of variables is $2d$ which reduces to $d$ if the coefficients of the differential operator are constants and the dimensionality is $k^d$ where $k$ is the degree of the used polynomials in the Finite Elements. For diminishing the presence of an exponential number of branches, as discussed in the introduction, the solution is the Isogeometric analysis \cite{branches-IgA-k-l}.
%
\end{rem}

Finally we consider a variable coefficient version of the previous PDE in (\ref{prob_unbounded}) expressed as follows

\begin{equation}\label{prob_unbounded_var}
\begin{cases}
\text{div}(-a\nabla u) = v & \text{ on } \Omega\\
u=0 & \text{ on } \partial \Omega,
\end{cases}
\end{equation}
where $a(x,y)$ is a positive non-degenerate variable coefficient on $ \Omega = \{ (x,y) \in \mathbb{R} \, : \, x>0, y > 0 \text{ and } y < g(x) \} $ with
\begin{equation*}
g(x)= \begin{cases}
1 & x < 1\\
\frac{1}{x^2} & x \geq 1.
\end{cases}
\end{equation*}
Notice that for $a\equiv 1$ problem (\ref{prob_unbounded_var}) reduces to (\ref{prob_unbounded}).
As in the constant coefficient case we opt for basic $P_1$ Finite Elements. According to the theory reported in \cite{branches-FEM,P_k-elements} the symbol is
\[
f(\theta_1,\theta_2,x,y)=(4-2\cos \theta_1-2\cos \theta_2)a(x,y),
\]
with four variables, $(x,y)$ in the physical domain, $(\theta_1,\theta_2)$ in the Fourier domain, and dimensionality $1$ since the degree of the polynomials used in the finite element is $1$.
\subsection{Numerical evidences}

In this subsection we show numerical tests and visualizations corroborating the analysis conducted in the previous subsection. We focus on essentially few discretization techniques for the approximation of \eqref{prob_unbounded} and \eqref{prob_unbounded_var}, namely standard centered Finite Differences of order two, $P_1$ Finite Elements,
$Q_1$ Finite Elements, and $P_2$ Finite Elements.

With reference to Remark \ref{rem:actors}, we observed that the conditioning of the resulting matrices grows as $N^{2\over d}$ with $N$ being the matrix-size and $d$ being the dimensionality. This is due to the minimal eigenvalue which converges to zero as $N^{-{2\over d}}$
(see \cite{SerraBIT96,SerraLAA98,BGLAA98} for the pure Toeplitz setting and \cite{minimal eig var 1,minimal eig var 2} for the variable coefficient case) and the related property can be seen in Figures \ref{FD_A}, \ref{Q1_A}, \ref{P2_A} where the univariate unique nondecreasing rearrangement of the symbol for $d=2$ has a positive bounded derivative so that the minimal eigenvalue tends to zero as $N^{-1}$. Things do not change, as expected, in the variable coefficient setting since the diffusion coefficient $a(x,y)$ is positive and bounded: see Figure \ref{FDcoeff_A}.
For the rest there is nothing much to comment given the very strong agreement of the spectral behavior of the global matrix-sequences and of the corresponding approximations $ \{\{ B_{\mathbf{n},t} \}\} $,  $ \{\{ C_{\mathbf{n},t} \}\} $ as $t$ grows. The very striking fact is that the convergence is appreciated already for moderate values of $t$ giving an evidence of the practical use of the used tools i.e. the notion of a.c.s and that, which is indeed more natural, of the generalized a.c.s.: for the details on the approximating matrices refer to the figures with even numbering from 4 to 16, from 20 to 32, from 36 to 48, from 52 to 64, and compare with Figures \ref{FD_A}, \ref{Q1_A}, \ref{P2_A}, \ref{FDcoeff_A} regarding the various matrix-sequences $ \{A_{\mathbf{n}} \} $, respectively. Furthermore, in the figures with odd numbering from 3 to 65, the errors in the eigenvalue predictions are reported: the pseudo-random behavior can be attributed to the non-decreasing (univariate) rearrangement of the spectral symbol of $ \{A_{\mathbf{n}} \} $. When maintaining the complete number of variables a much smoother surface is expected. 
In addition, when looking at the figures related to the $P_2$ approximation we observe 4 points where the rearranged symbol 
is not differentiable and this corresponds to the 4 branches of the spectra since the symbol is $4\times 4$ Hermitian-valued. Finally, 
when looking at the figures with even numbering from 50 to 64, we observe smoother curves and wider ranges and this is due 
to the variable coefficient $a(x,y)$, since any of the four eigenvalue functions in the constant coefficient case 
is multiplied by $a(x,y)$.


\subsubsection{Finite Differences/$P_1$ - $f(\theta_1,\theta_2)=(2-2\cos \theta_1)+(2-2\cos \theta_2)$}
Finite Differences/$P_1$: $f(\theta_1,\theta_2)=(2-2\cos \theta_1)+(2-2\cos \theta_2)$ \\

\begin{figure}
\centering
  \includegraphics[width=\textwidth]{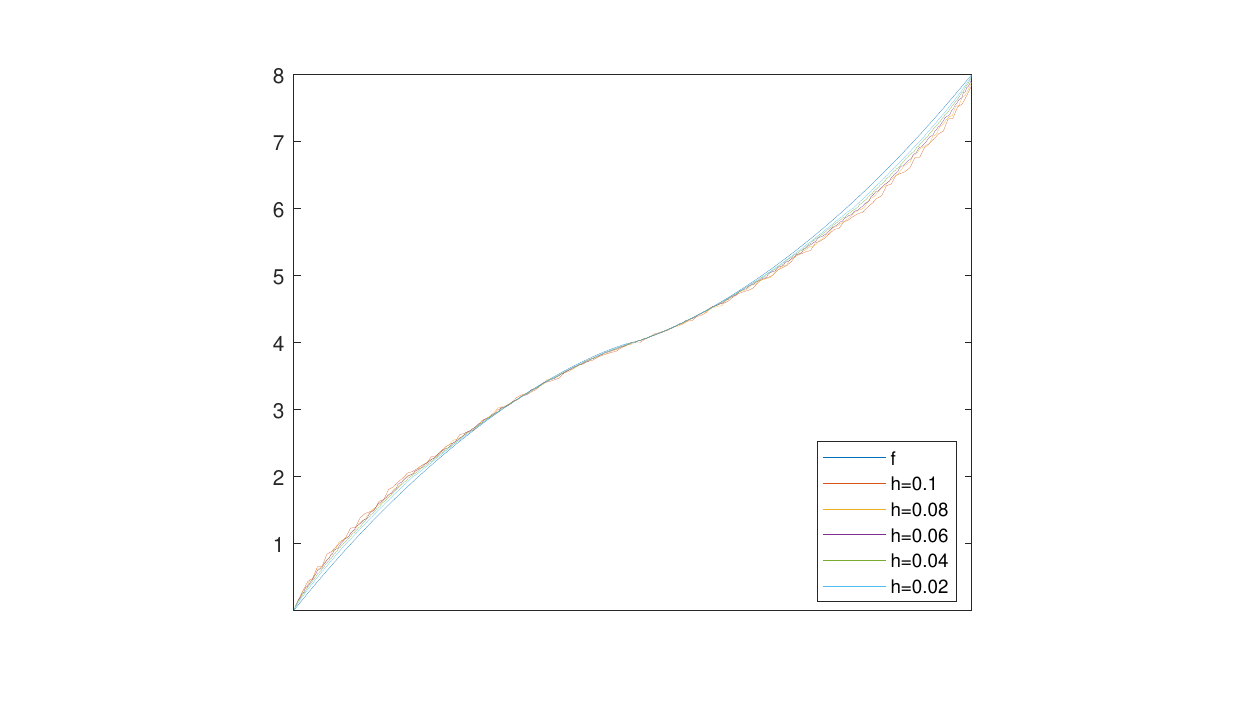} \vskip -0.5cm
  \caption{Eigenvalues distribution of $A_n$ for different $h$ values  together with the sampling of $f(\theta_1,\theta_2)=(2-2\cos \theta_1)+(2-2\cos \theta_2)$.}
\label{FD_A}
\end{figure}
\begin{figure}
\centering
  \includegraphics[width=\textwidth]{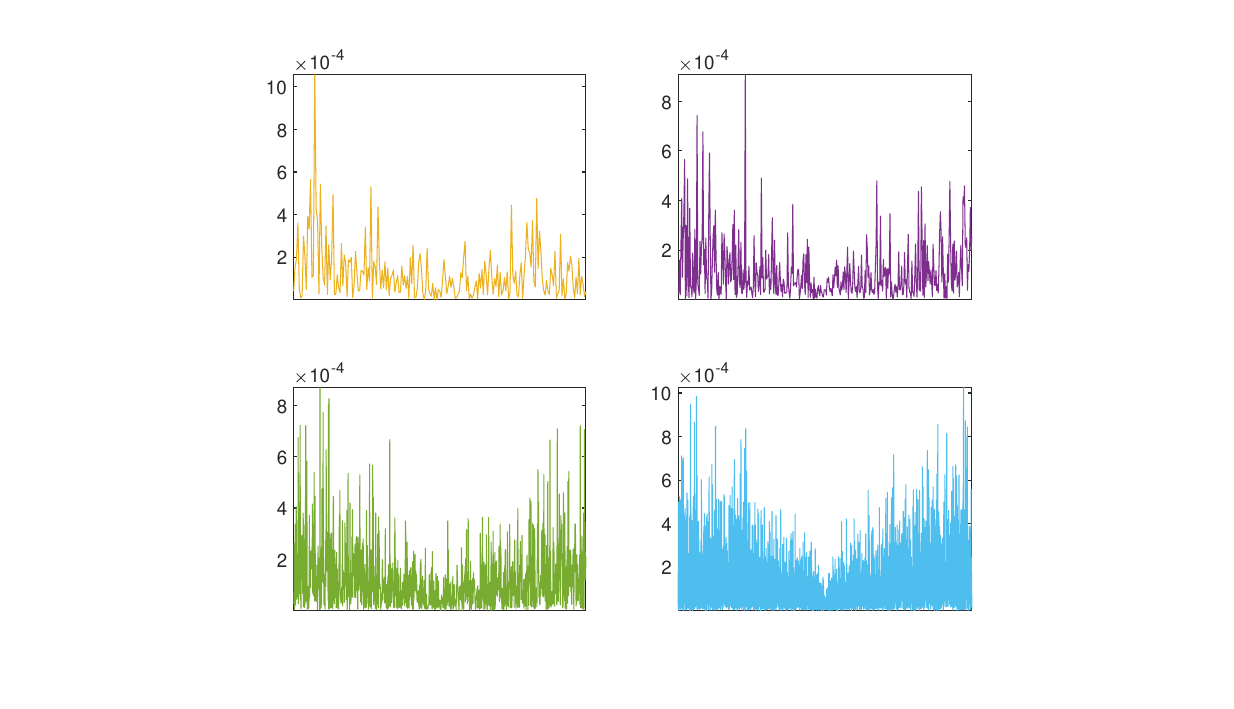} \vskip -0.5cm
  \caption{Minimal distance of eigenvalues of $A_n$ from $f(\theta_1,\theta_2)=(2-2\cos \theta_1)+(2-2\cos \theta_2)$ for different $h$ values.}
\label{FD_A_errore}
\end{figure}
%
\begin{figure}
\centering
  \includegraphics[width=\textwidth]{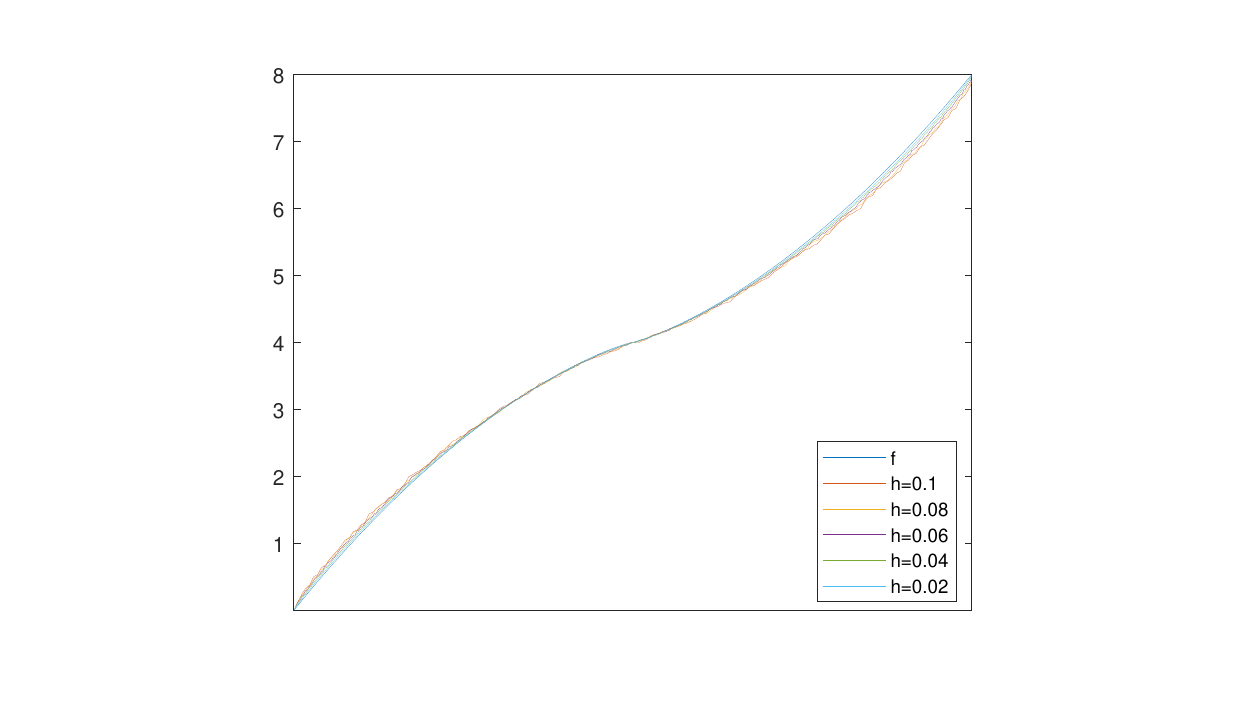} \vskip -0.5cm
  \caption{Eigenvalues distribution of $C_{n,t}$ for different $h$ values  together with the sampling of $f(\theta_1,\theta_2)=(2-2\cos \theta_1)+(2-2\cos \theta_2)$ and $t=2$.}
\label{FD_Bnt_small_t2}
\end{figure}
\begin{figure}
\centering
  \includegraphics[width=\textwidth]{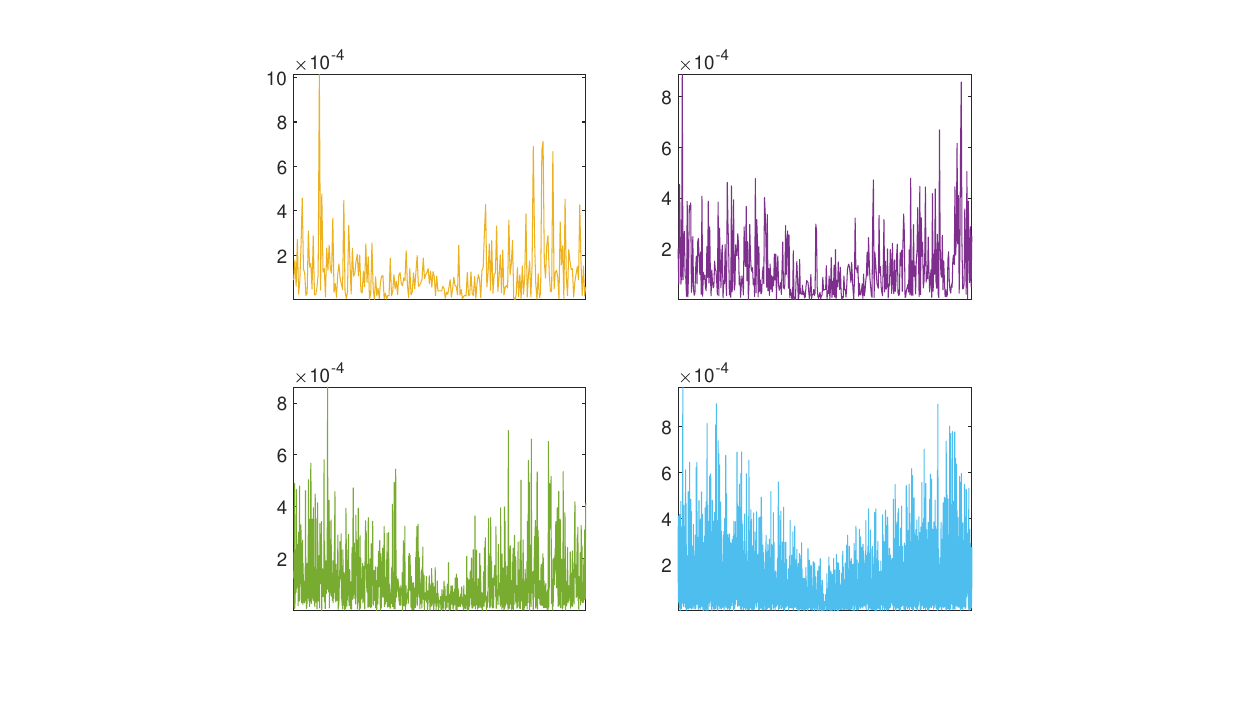} \vskip -0.5cm
  \caption{Minimal distance of eigenvalues of $C_{n,t}$ from $f(\theta_1,\theta_2)=(2-2\cos \theta_1)+(2-2\cos \theta_2)$ and $t=2$ for different $h$ values.}
\label{FD_Bnt_small_t2_errore}
\end{figure}
\begin{figure}
\centering
  \includegraphics[width=\textwidth]{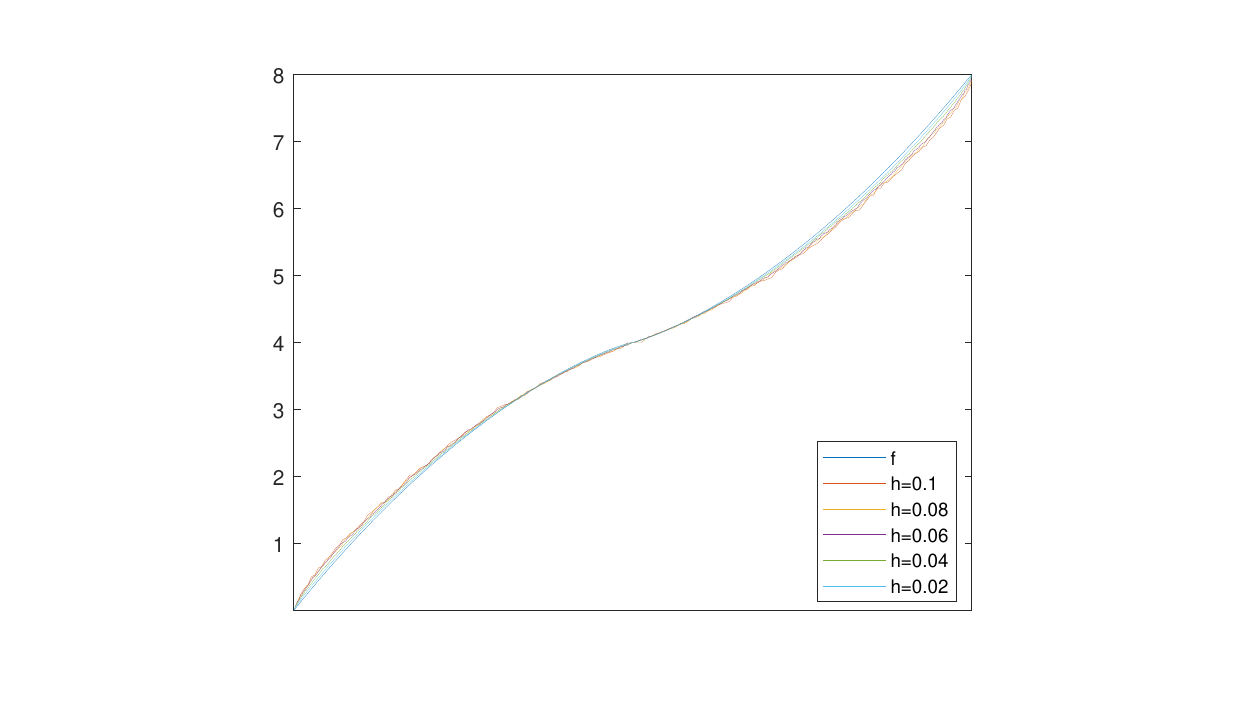} \vskip -0.5cm
  \caption{Eigenvalues distribution of $C_{n,t}$ for different $h$ values  together with the sampling of $f(\theta_1,\theta_2)=(2-2\cos \theta_1)+(2-2\cos \theta_2)$ and $t=4$.}
\label{FD_Bnt_small_t4}
\end{figure}
\begin{figure}
\centering
  \includegraphics[width=\textwidth]{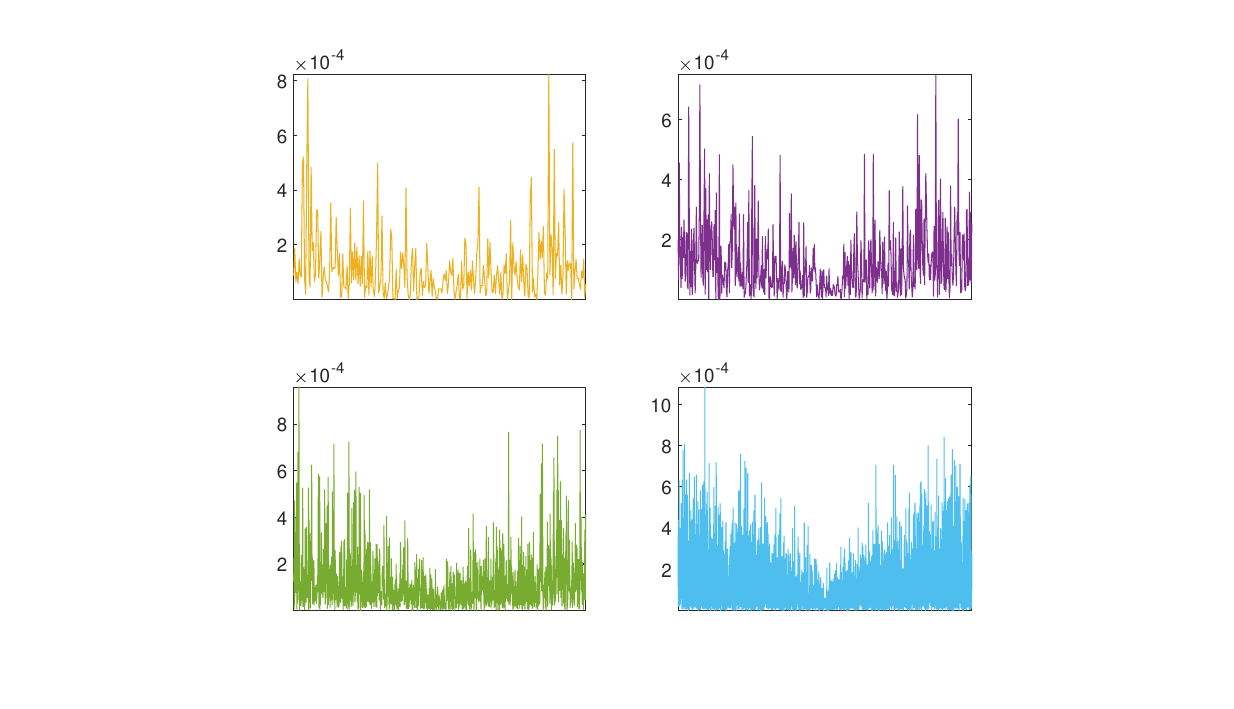} \vskip -0.5cm
  \caption{Minimal distance of eigenvalues of $C_{n,t}$ from $f(\theta_1,\theta_2)=(2-2\cos \theta_1)+(2-2\cos \theta_2)$ and $t=4$ for different $h$ values.}
\label{FD_Bnt_small_t4_errore}
\end{figure}
\begin{figure}
\centering
  \includegraphics[width=\textwidth]{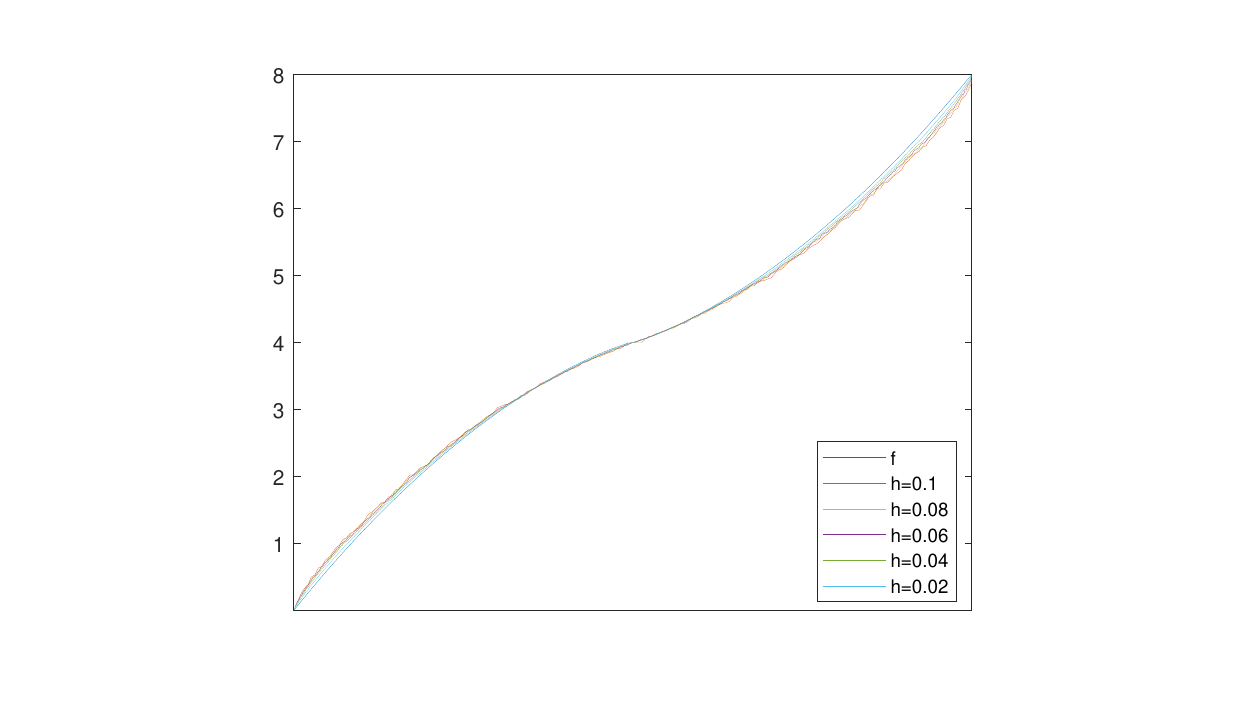} \vskip -0.5cm
  \caption{Eigenvalues distribution of $C_{n,t}$ for different $h$ values  together with the sampling of $f(\theta_1,\theta_2)=(2-2\cos \theta_1)+(2-2\cos \theta_2)$ and $t=6$.}
\label{FD_Bnt_small_t6}
\end{figure}
\begin{figure}
\centering
  \includegraphics[width=\textwidth]{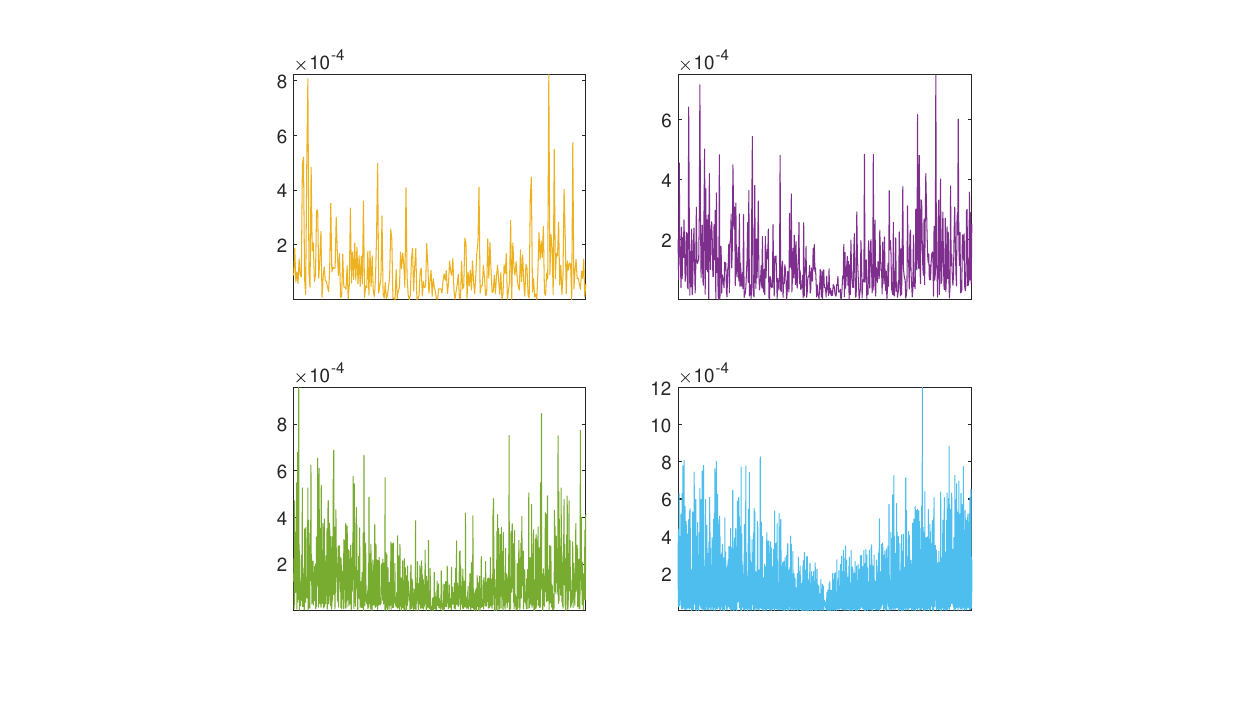} \vskip -0.5cm
  \caption{Minimal distance of eigenvalues of $C_{n,t}$ from $f(\theta_1,\theta_2)=(2-2\cos \theta_1)+(2-2\cos \theta_2)$ and $t=6$ for different $h$ values.}
\label{FD_Bnt_small_t6_errore}
\end{figure}
\begin{figure}
\centering
  \includegraphics[width=\textwidth]{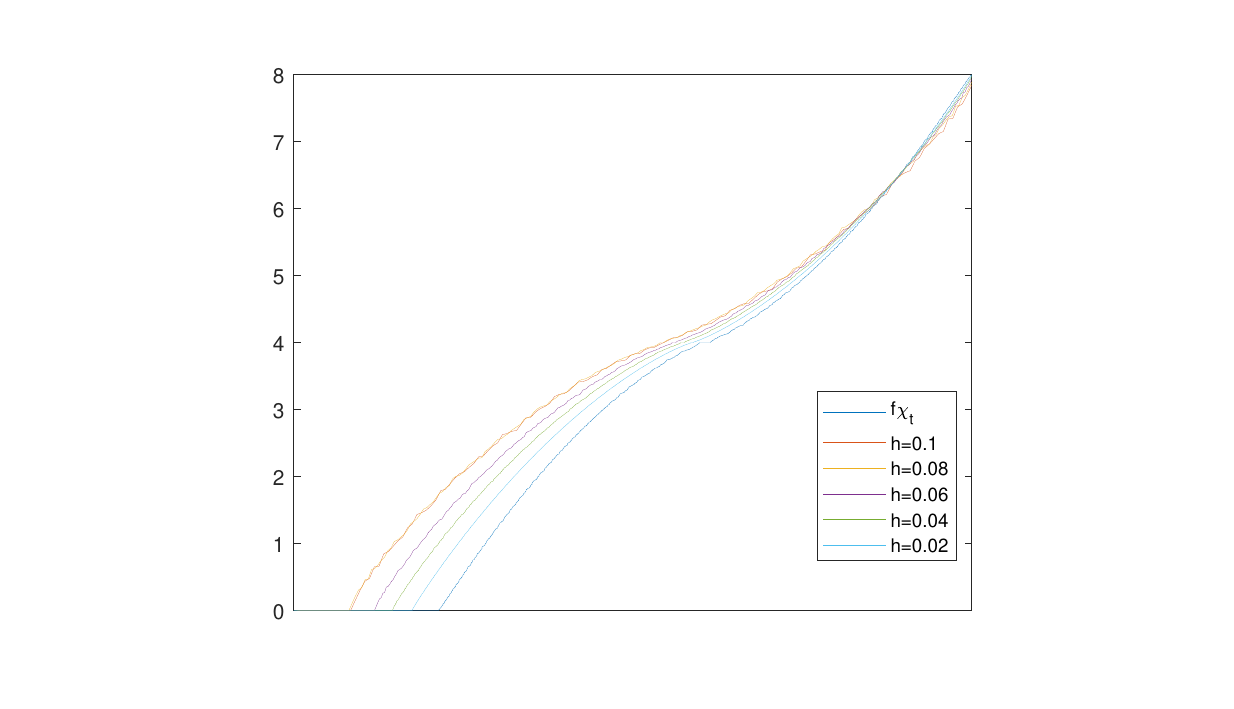} \vskip -0.5cm
  \caption{Eigenvalues distribution of $B_{n,t}$ for different $h$ values  together with the sampling of $f(\theta_1,\theta_2)=(2-2\cos \theta_1)+(2-2\cos \theta_2)$ and $t=2$.}
\label{FD_Bnt_full_t2}
\end{figure}
\begin{figure}
\centering
  \includegraphics[width=\textwidth]{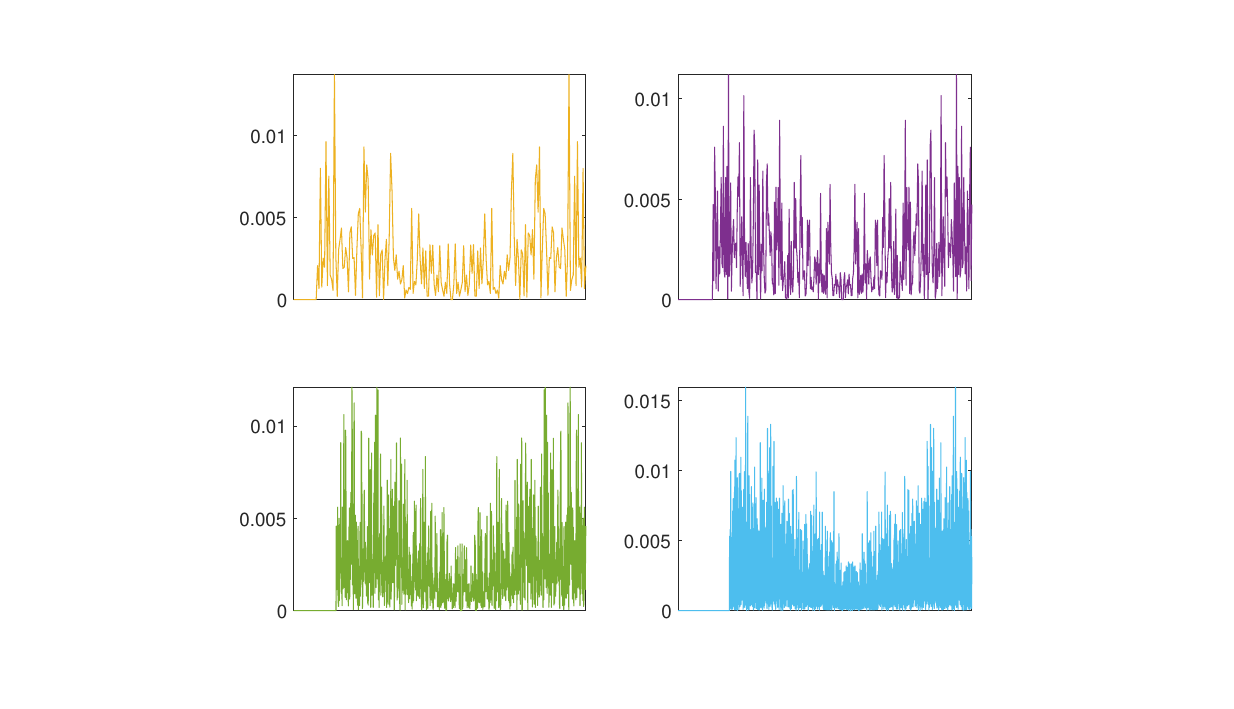} \vskip -0.5cm
  \caption{Minimal distance of eigenvalues of $B_{n,t}$ from $f(\theta_1,\theta_2)=(2-2\cos \theta_1)+(2-2\cos \theta_2)$ and $t=2$ for different $h$ values.}
\label{FD_Bnt_full_t2_errore}
\end{figure}
\begin{figure}
\centering
  \includegraphics[width=\textwidth]{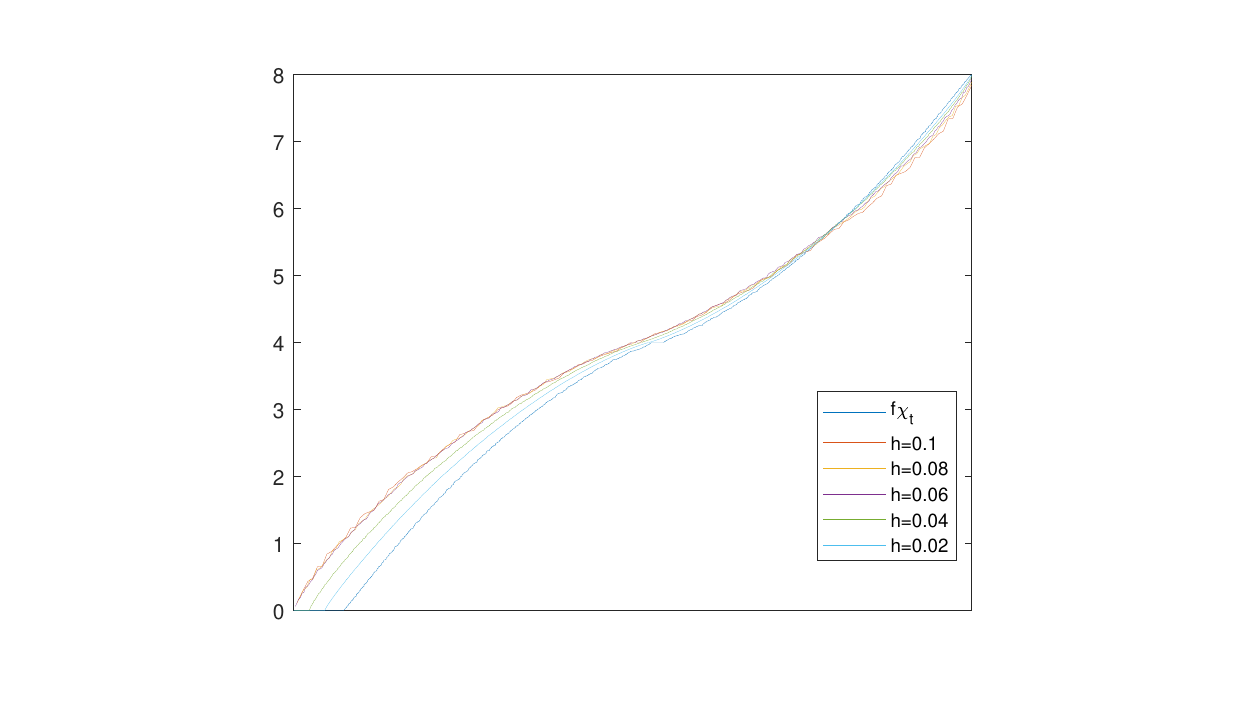} \vskip -0.5cm
  \caption{Eigenvalues distribution of $B_{n,t}$ for different $h$ values  together with the sampling of $f(\theta_1,\theta_2)=(2-2\cos \theta_1)+(2-2\cos \theta_2)$ and $t=4$.}
\label{FD_Bnt_full_t4}
\end{figure}
\begin{figure}
\centering
  \includegraphics[width=\textwidth]{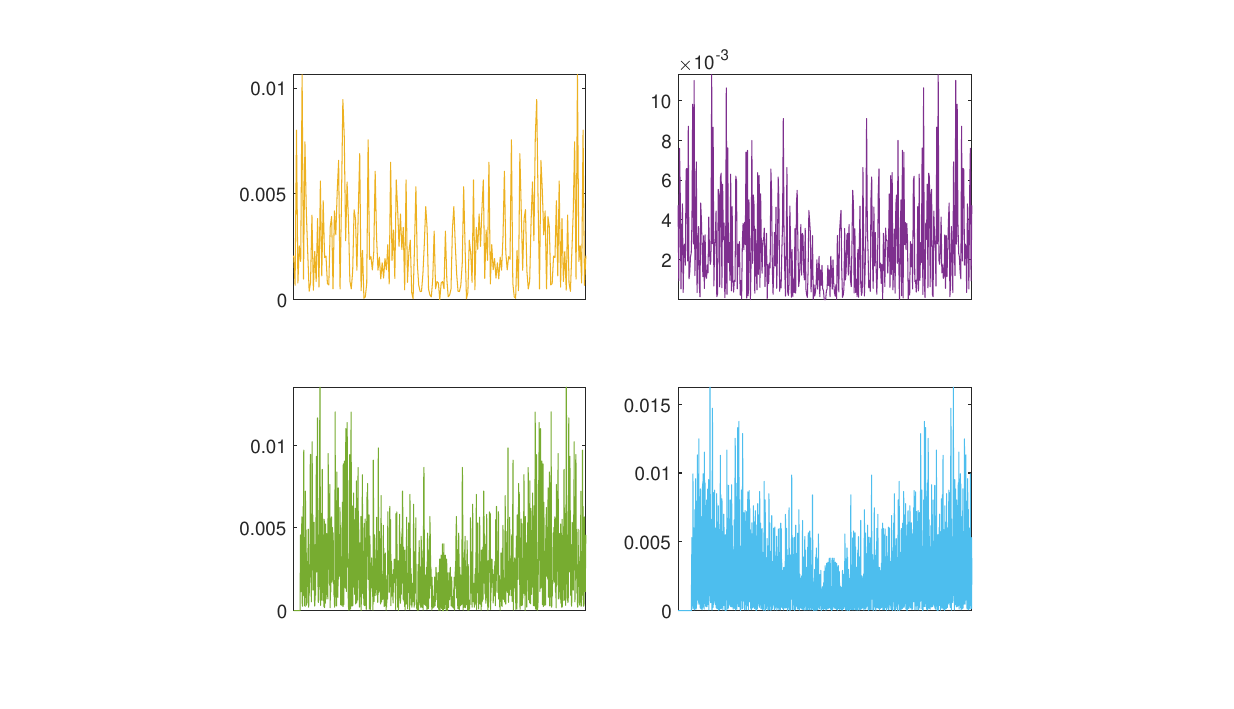} \vskip -0.5cm
  \caption{Minimal distance of eigenvalues of $B_{n,t}$ from $f(\theta_1,\theta_2)=(2-2\cos \theta_1)+(2-2\cos \theta_2)$ and $t=4$ for different $h$ values.}
\label{FD_Bnt_full_t4_errore}
\end{figure}
\begin{figure}
\centering
  \includegraphics[width=\textwidth]{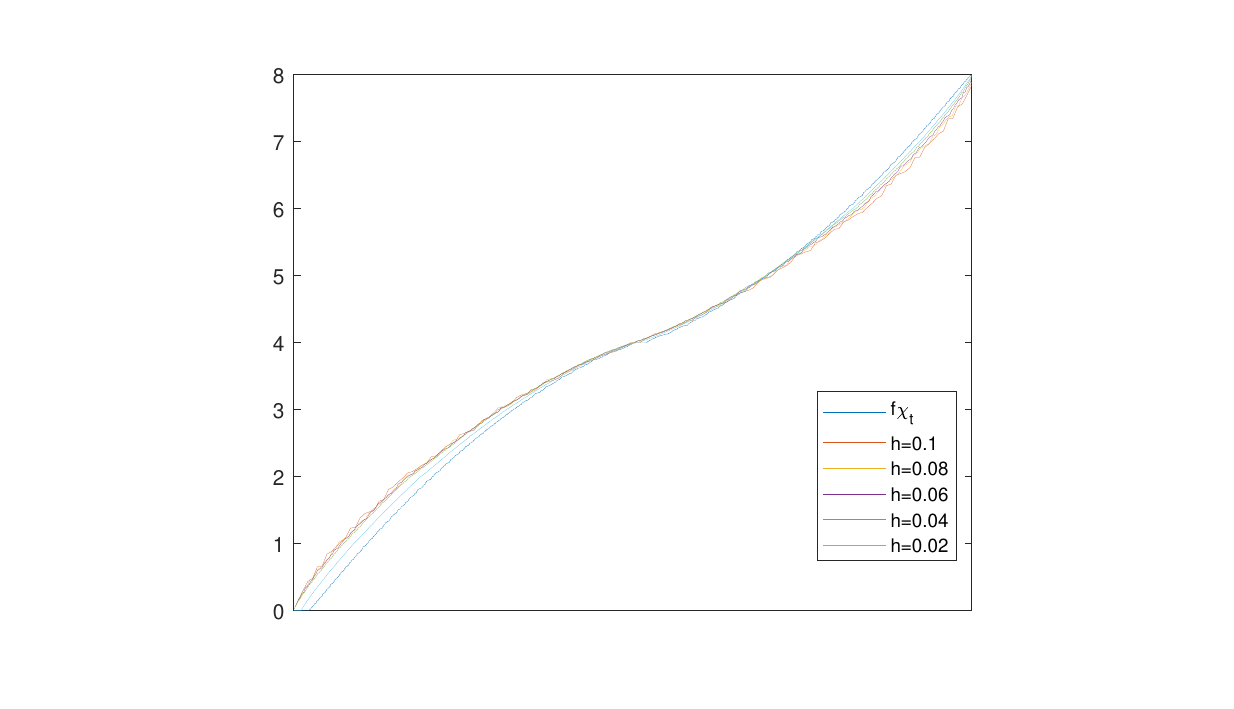} \vskip -0.5cm
  \caption{Eigenvalues distribution of $B_{n,t}$ for different $h$ values  together with the sampling of $f(\theta_1,\theta_2)=(2-2\cos \theta_1)+(2-2\cos \theta_2)$ and $t=6$.}
\label{FD_Bnt_full_t6}
\end{figure}
\begin{figure}
\centering
  \includegraphics[width=\textwidth]{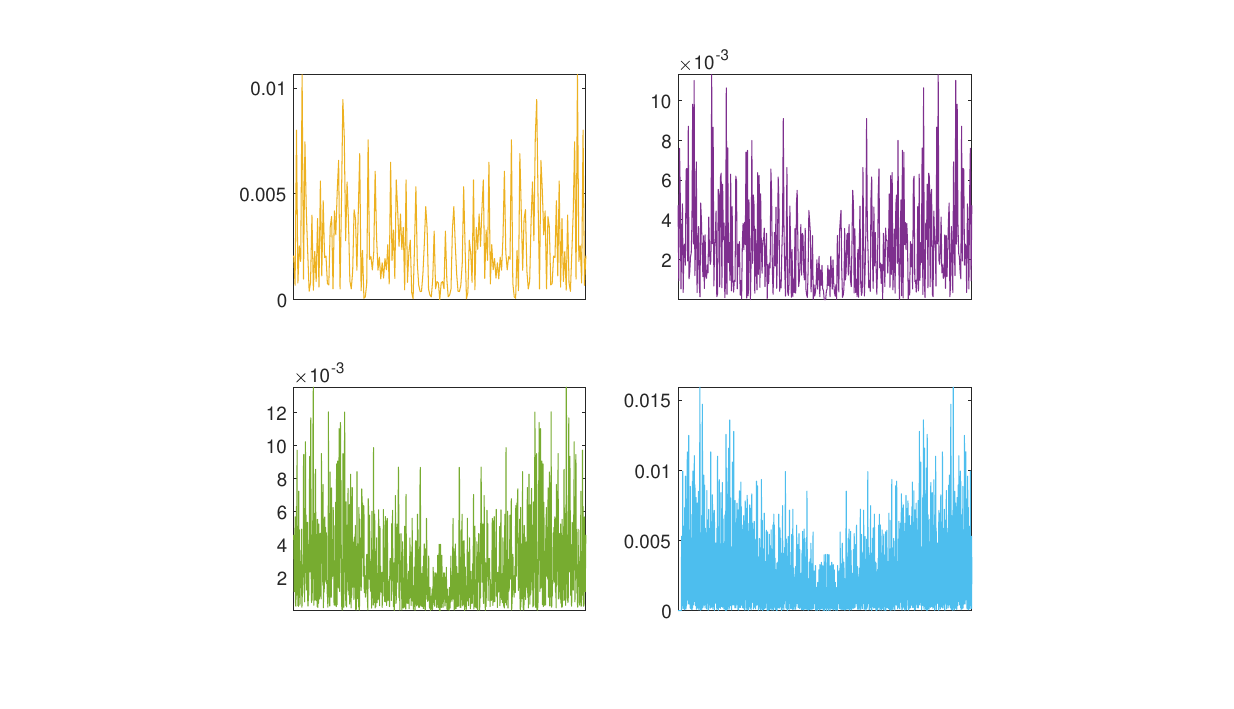} \vskip -0.5cm
  \caption{Minimal distance of eigenvalues of $B_{n,t}$ from $f(\theta_1,\theta_2)=(2-2\cos \theta_1)+(2-2\cos \theta_2)$ and $t=6$ for different $h$ values.}
\label{FD_Bnt_full_t6_errore}
\end{figure}
\begin{figure}
\centering
  \includegraphics[width=\textwidth]{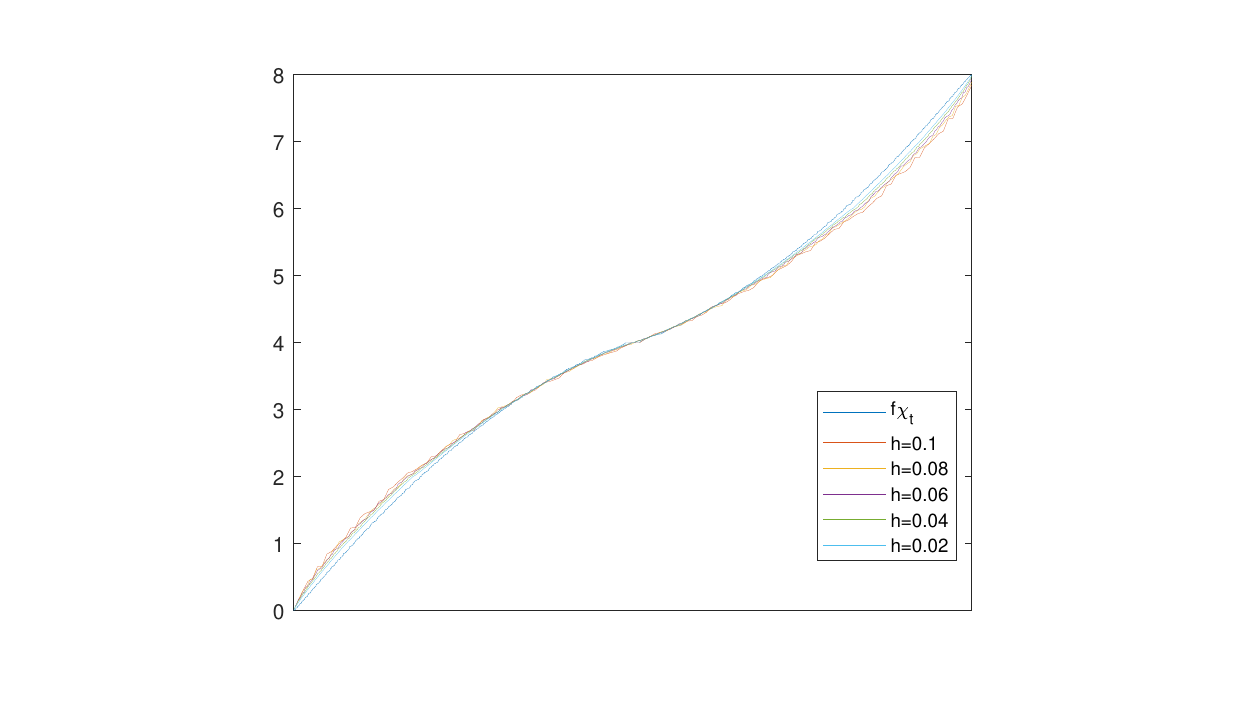} \vskip -0.5cm
  \caption{Eigenvalues distribution of $B_{n,t}$ for different $h$ values  together with the sampling of $f(\theta_1,\theta_2)=(2-2\cos \theta_1)+(2-2\cos \theta_2)$ and $t=8$.}
\label{FD_Bnt_full_t8}
\end{figure}
\begin{figure}
\centering
  \includegraphics[width=\textwidth]{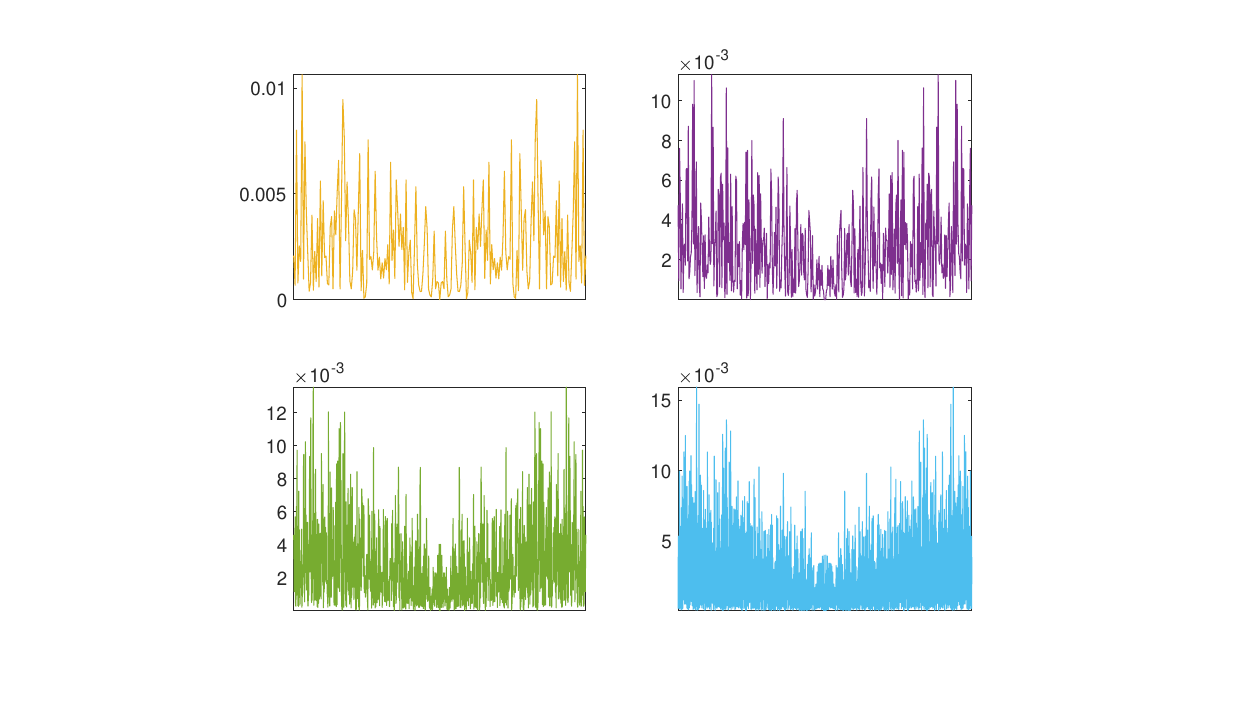} \vskip -0.5cm
  \caption{Minimal distance of eigenvalues of $B_{n,t}$ from $f(\theta_1,\theta_2)=(2-2\cos \theta_1)+(2-2\cos \theta_2)$ and $t=8$ for different $h$ values.}
\label{FD_Bnt_full_t8_errore}
\end{figure}

\subsubsection{$Q_1$ - $f(\theta_1,\theta_2)=(8 -2\cos\theta_1-2\cos\theta_2-4\cos\theta_1 \cos\theta_2)/3$}
$Q_1$: $f(\theta_1,\theta_2)=(8 -2\cos\theta_1-2\cos\theta_2-4\cos\theta_1 \cos\theta_2)/3$\\

\begin{figure}
\centering
  \includegraphics[width=\textwidth]{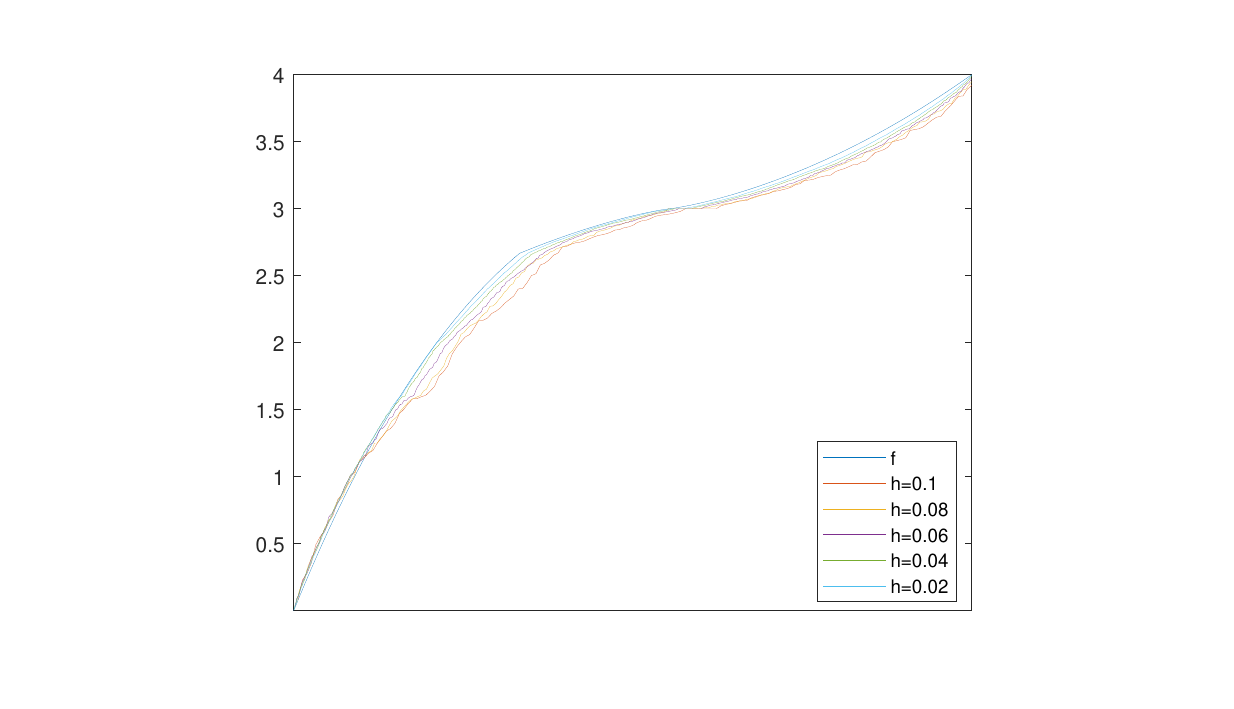} \vskip -0.5cm
  \caption{Eigenvalues distribution of $A_n$ for different $h$ values  together with the sampling of $f(\theta_1,\theta_2)=(8 -2\cos\theta_1-2\cos\theta_2-4\cos\theta_1 \cos\theta_2)/3$.}
\label{Q1_A}
\end{figure}
\begin{figure}
\centering
  \includegraphics[width=\textwidth]{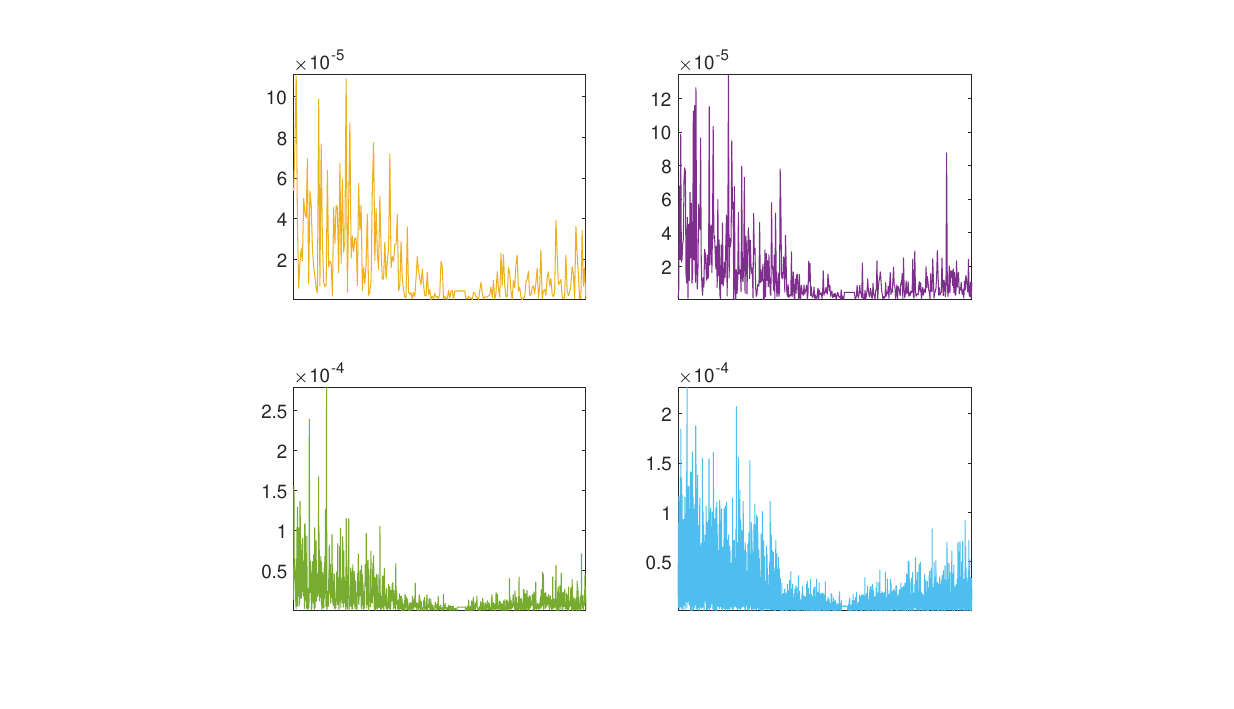} \vskip -0.5cm
  \caption{Minimal distance of eigenvalues of $A_n$ from $f(\theta_1,\theta_2)=(8 -2\cos\theta_1-2\cos\theta_2-4\cos\theta_1 \cos\theta_2)/3$ for different $h$ values.}
\label{Q1_A_errore}
\end{figure}
\begin{figure}
\centering
  \includegraphics[width=\textwidth]{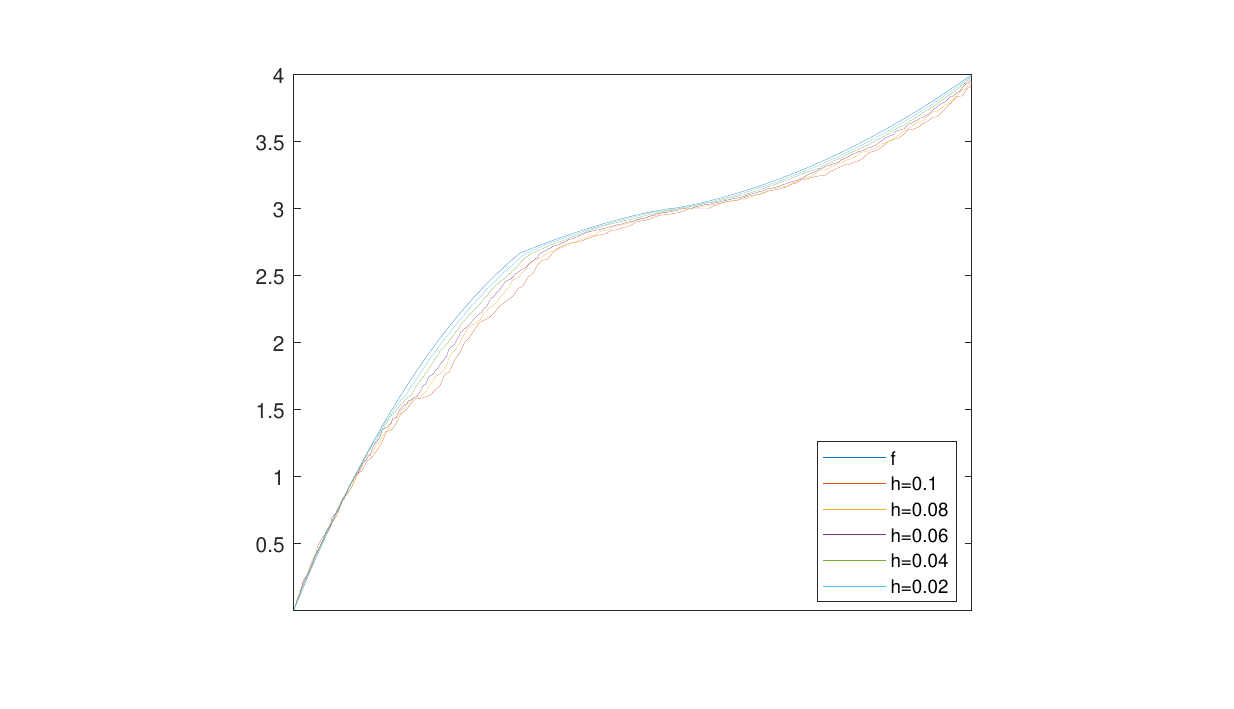} \vskip -0.5cm
  \caption{Eigenvalues distribution of $C_{n,t}$ for different $h$ values  together with the sampling of $f(\theta_1,\theta_2)=(8 -2\cos\theta_1-2\cos\theta_2-4\cos\theta_1 \cos\theta_2)/3$ and $t=2$.}
\label{Q1_Bnt_small_t2}
\end{figure}
\begin{figure}
\centering
  \includegraphics[width=\textwidth]{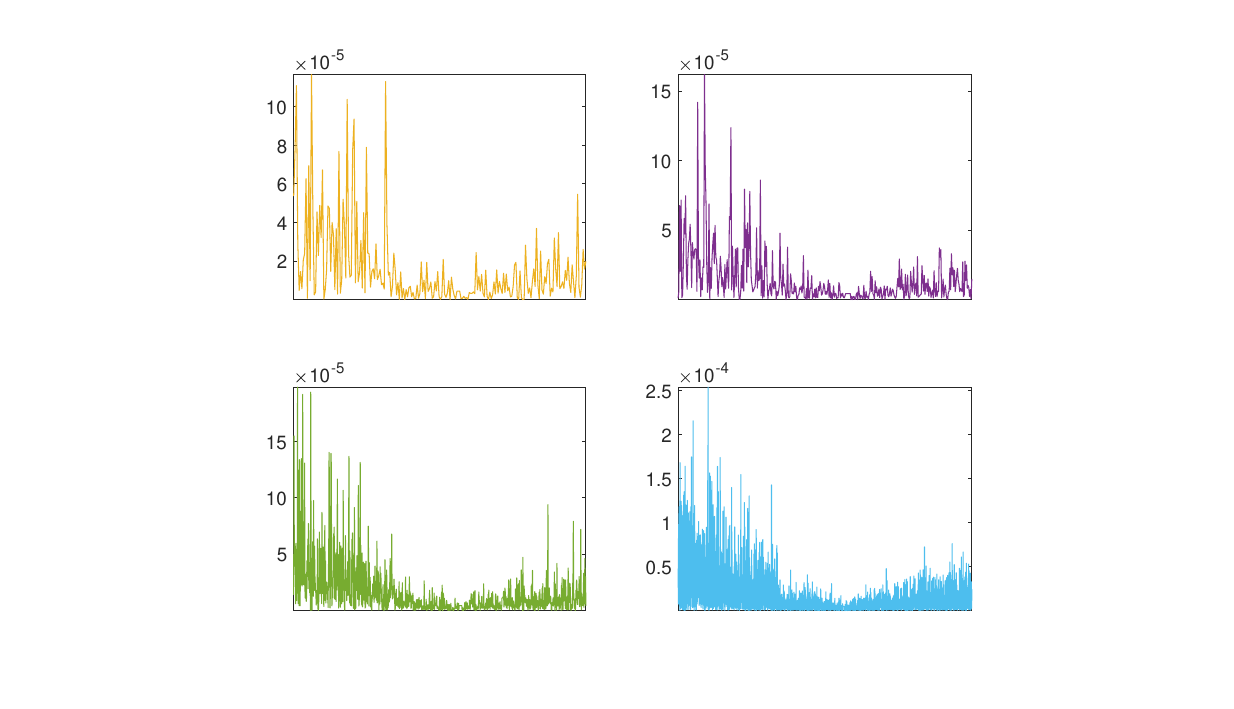} \vskip -0.5cm
  \caption{Minimal distance of eigenvalues of $C_{n,t}$ from $f(\theta_1,\theta_2)=(8 -2\cos\theta_1-2\cos\theta_2-4\cos\theta_1 \cos\theta_2)/3$ and $t=2$ for different $h$ values.}
\label{Q1_Bnt_small_t2_errore}
\end{figure}
\begin{figure}
\centering
  \includegraphics[width=\textwidth]{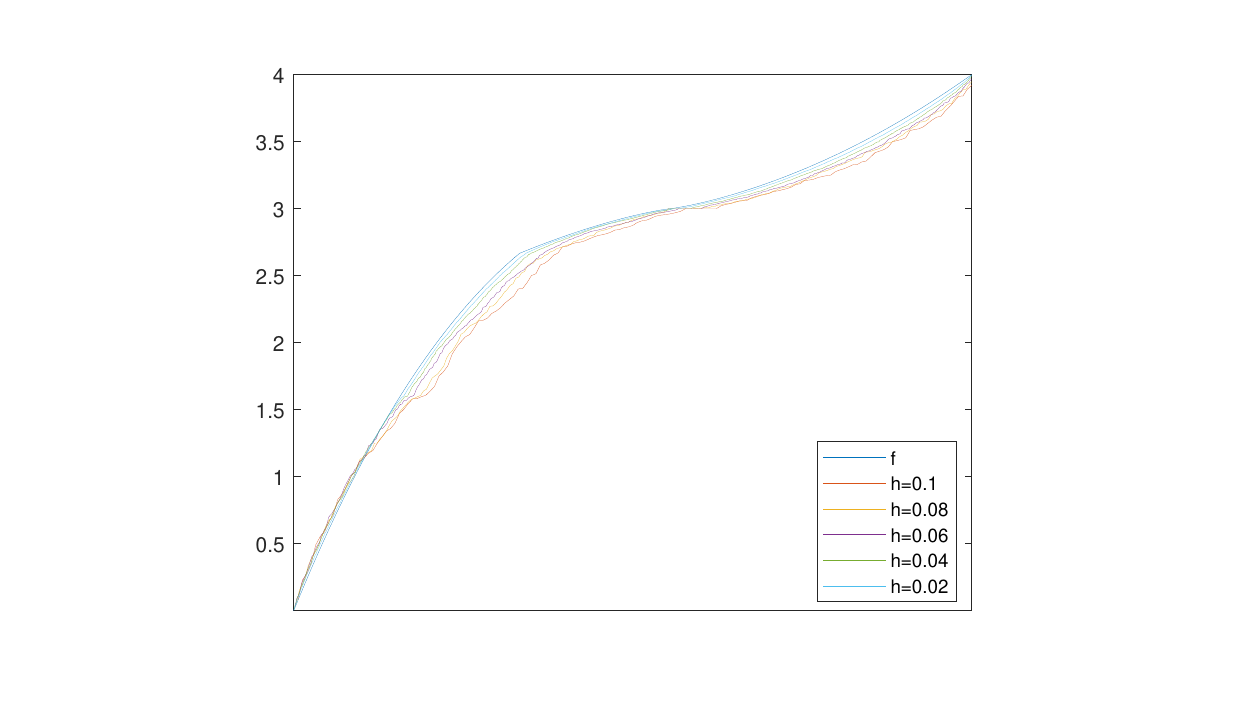} \vskip -0.5cm
  \caption{Eigenvalues distribution of $C_{n,t}$ for different $h$ values  together with the sampling of $f(\theta_1,\theta_2)=(8 -2\cos\theta_1-2\cos\theta_2-4\cos\theta_1 \cos\theta_2)/3$ and $t=4$.}
\label{Q1_Bnt_small_t4}
\end{figure}
\begin{figure}
\centering
  \includegraphics[width=\textwidth]{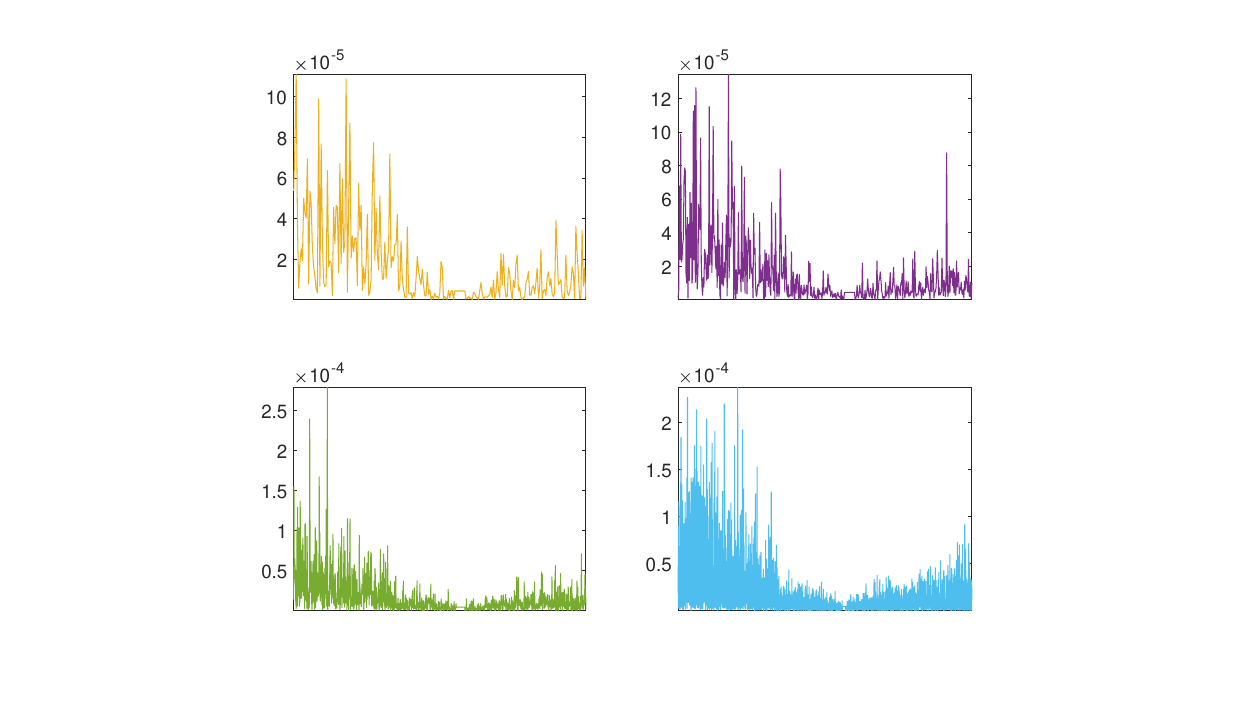} \vskip -0.5cm
  \caption{Minimal distance of eigenvalues of $C_{n,t}$ from $f(\theta_1,\theta_2)=(8 -2\cos\theta_1-2\cos\theta_2-4\cos\theta_1 \cos\theta_2)/3$ and $t=4$ for different $h$ values.}
\label{Q1_Bnt_small_t4_errore}
\end{figure}
\begin{figure}
\centering
  \includegraphics[width=\textwidth]{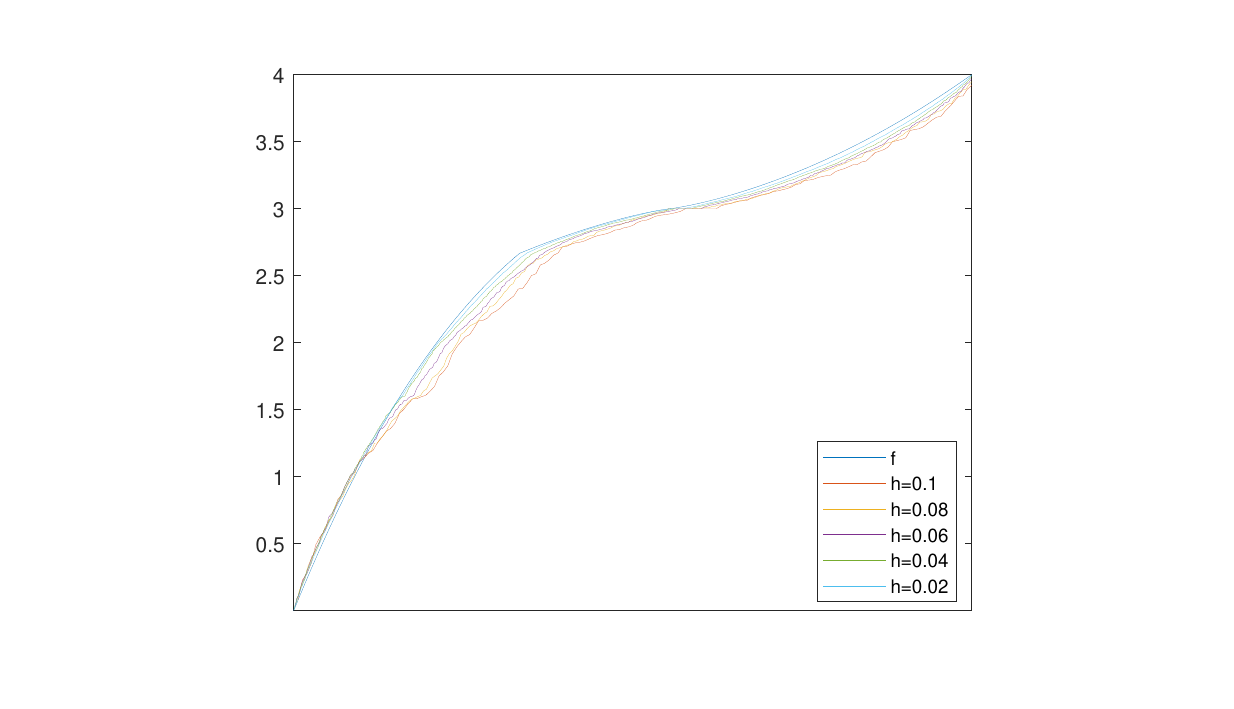} \vskip -0.5cm
  \caption{Eigenvalues distribution of $C_{n,t}$ for different $h$ values  together with the sampling of $f(\theta_1,\theta_2)=(8 -2\cos\theta_1-2\cos\theta_2-4\cos\theta_1 \cos\theta_2)/3$ and $t=6$.}
\label{Q1_Bnt_small_t6}
\end{figure}
\begin{figure}
\centering
  \includegraphics[width=\textwidth]{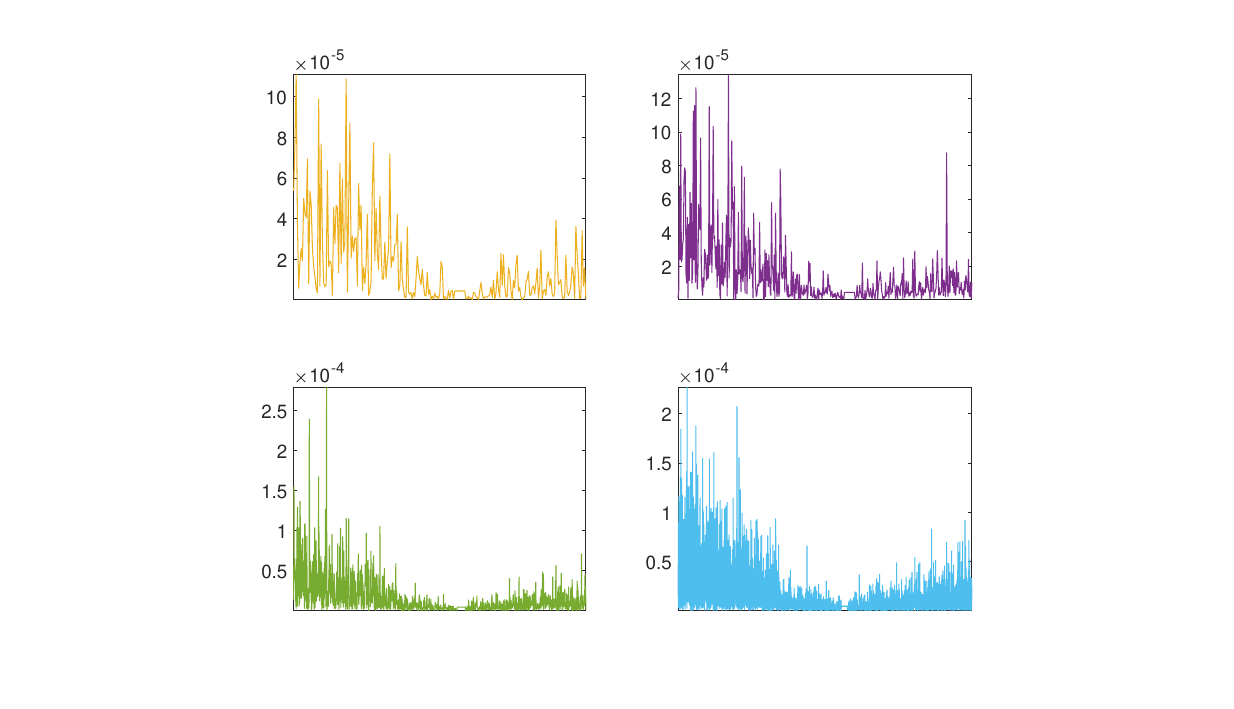} \vskip -0.5cm
  \caption{Minimal distance of eigenvalues of $C_{n,t}$ from $f(\theta_1,\theta_2)=(8 -2\cos\theta_1-2\cos\theta_2-4\cos\theta_1 \cos\theta_2)/3$ and $t=6$ for different $h$ values.}
\label{Q1_Bnt_small_t6_errore}
\end{figure}
\begin{figure}
\centering
  \includegraphics[width=\textwidth]{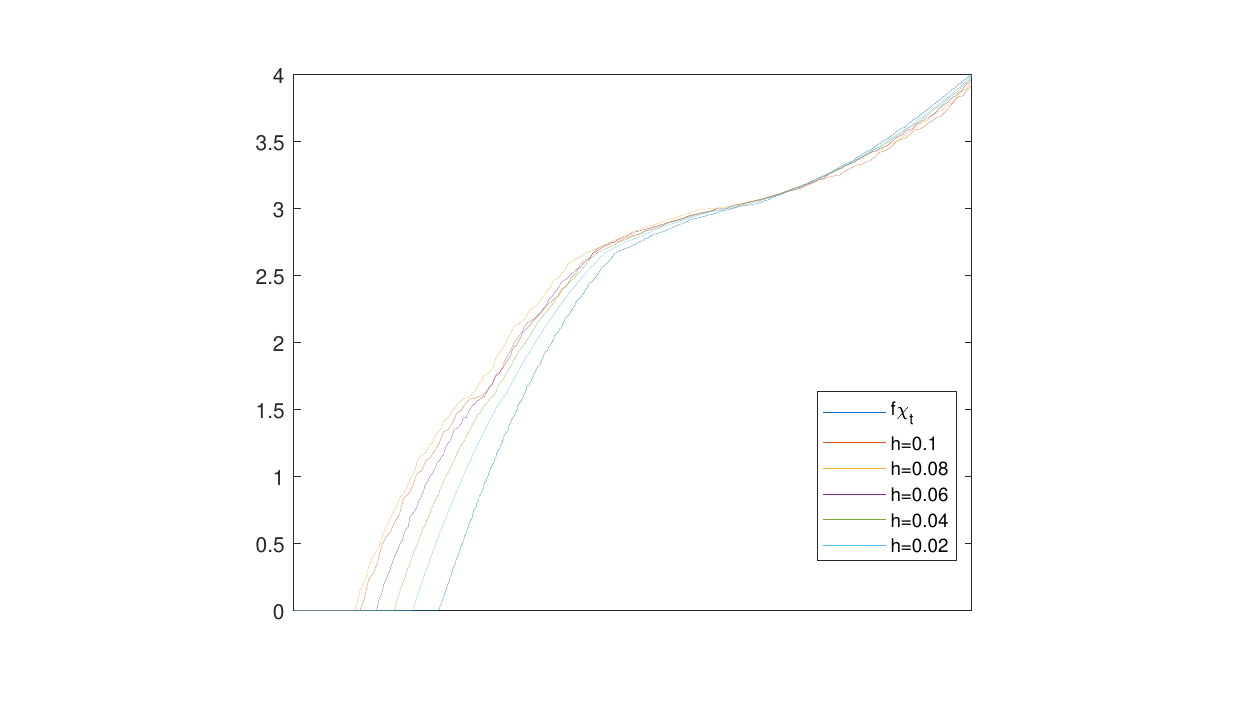} \vskip -0.5cm
  \caption{Eigenvalues distribution of $B_{n,t}$ for different $h$ values  together with the sampling of $f(\theta_1,\theta_2)=(8 -2\cos\theta_1-2\cos\theta_2-4\cos\theta_1 \cos\theta_2)/3$ and $t=2$.}
\label{Q1_Bnt_full_t2}
\end{figure}
\begin{figure}
\centering
  \includegraphics[width=\textwidth]{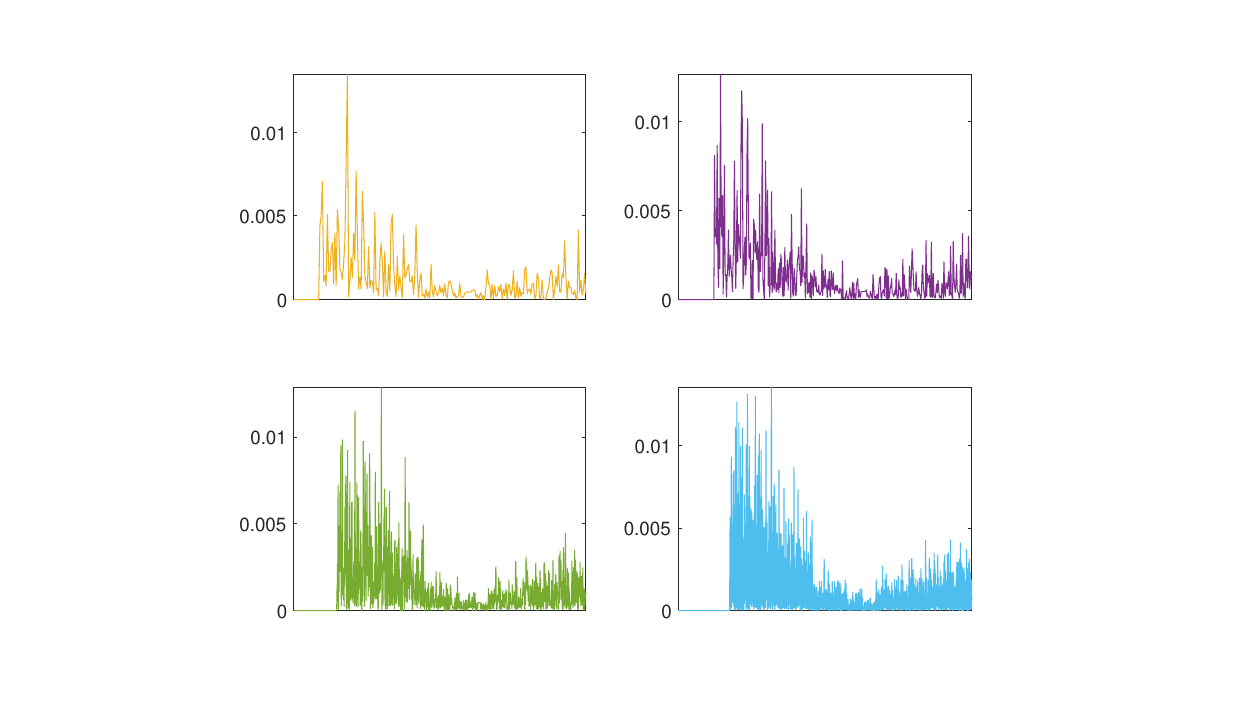} \vskip -0.5cm
  \caption{Minimal distance of eigenvalues of $B_{n,t}$ from $f(\theta_1,\theta_2)=(8 -2\cos\theta_1-2\cos\theta_2-4\cos\theta_1 \cos\theta_2)/3$ and $t=2$ for different $h$ values.}
\label{Q1_Bnt_full_t2_errore}
\end{figure}
\begin{figure}
\centering
  \includegraphics[width=\textwidth]{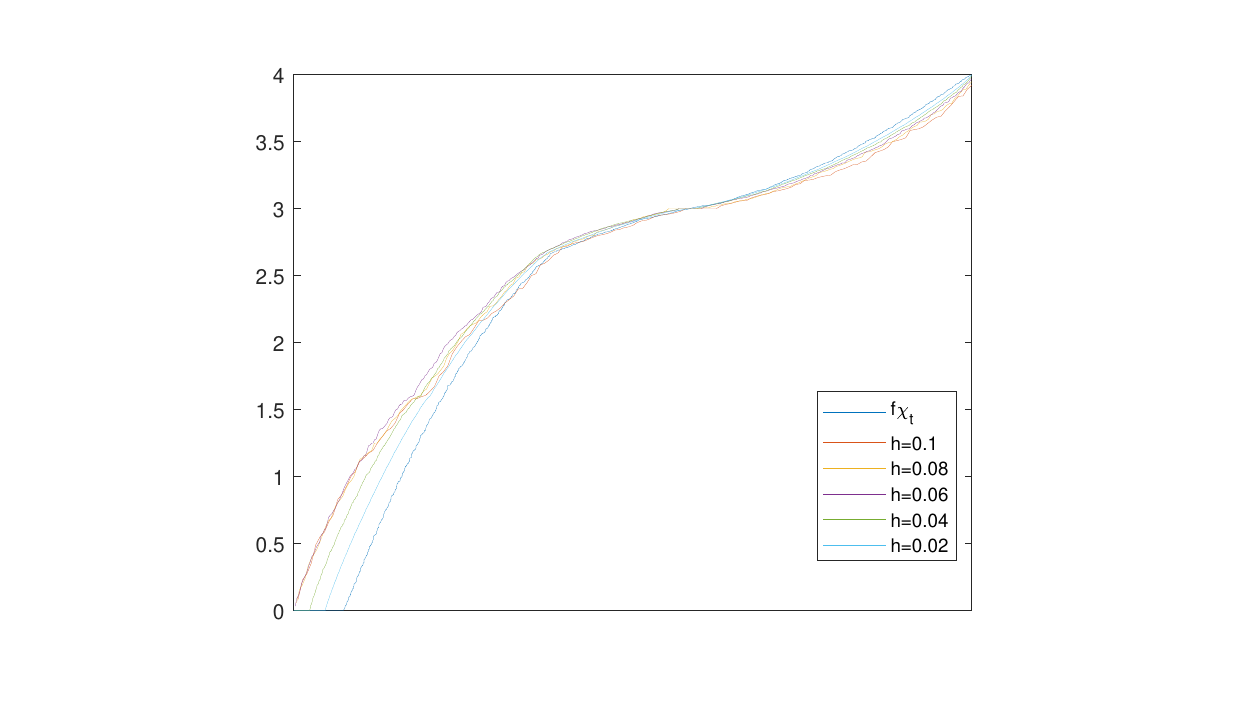} \vskip -0.5cm
  \caption{Eigenvalues distribution of $B_{n,t}$ for different $h$ values  together with the sampling of $f(\theta_1,\theta_2)=(8 -2\cos\theta_1-2\cos\theta_2-4\cos\theta_1 \cos\theta_2)/3$ and $t=4$.}
\label{Q1_Bnt_full_t4}
\end{figure}
\begin{figure}
\centering
  \includegraphics[width=\textwidth]{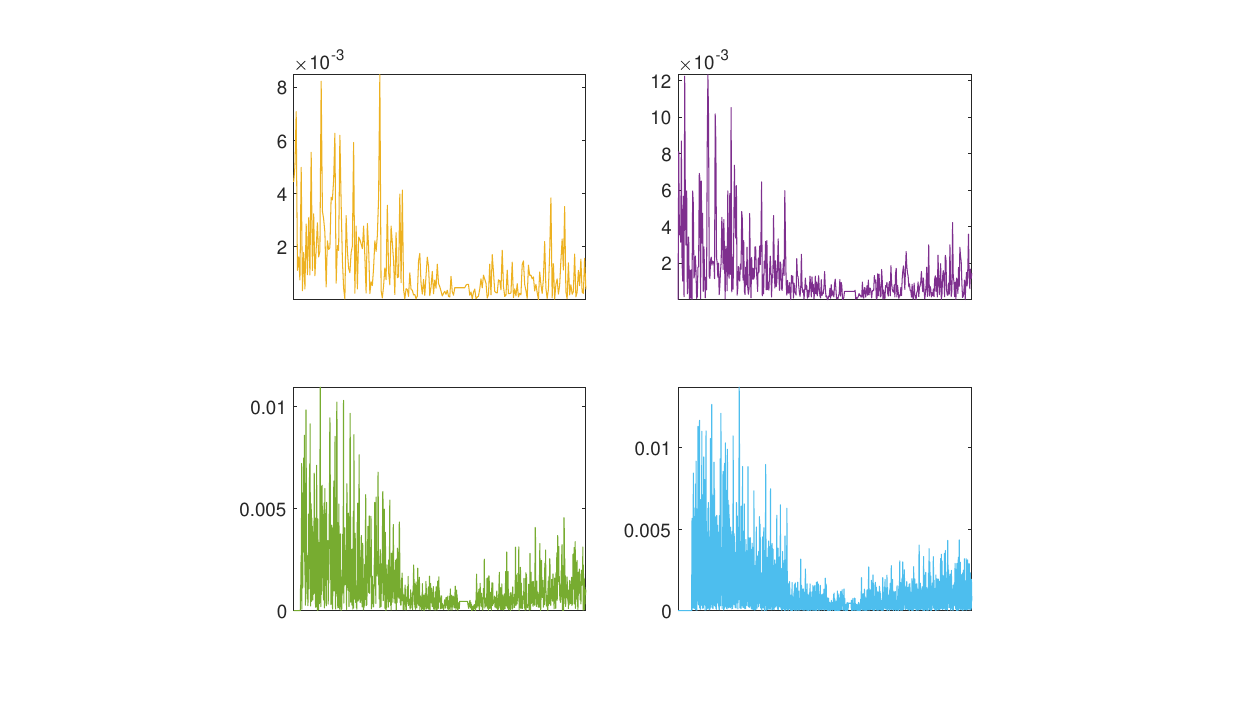} \vskip -0.5cm
  \caption{Minimal distance of eigenvalues of $B_{n,t}$ from $f(\theta_1,\theta_2)=(8 -2\cos\theta_1-2\cos\theta_2-4\cos\theta_1 \cos\theta_2)/3$ and $t=4$ for different $h$ values.}
\label{Q1_Bnt_full_t4_errore}
\end{figure}
\begin{figure}
\centering
  \includegraphics[width=\textwidth]{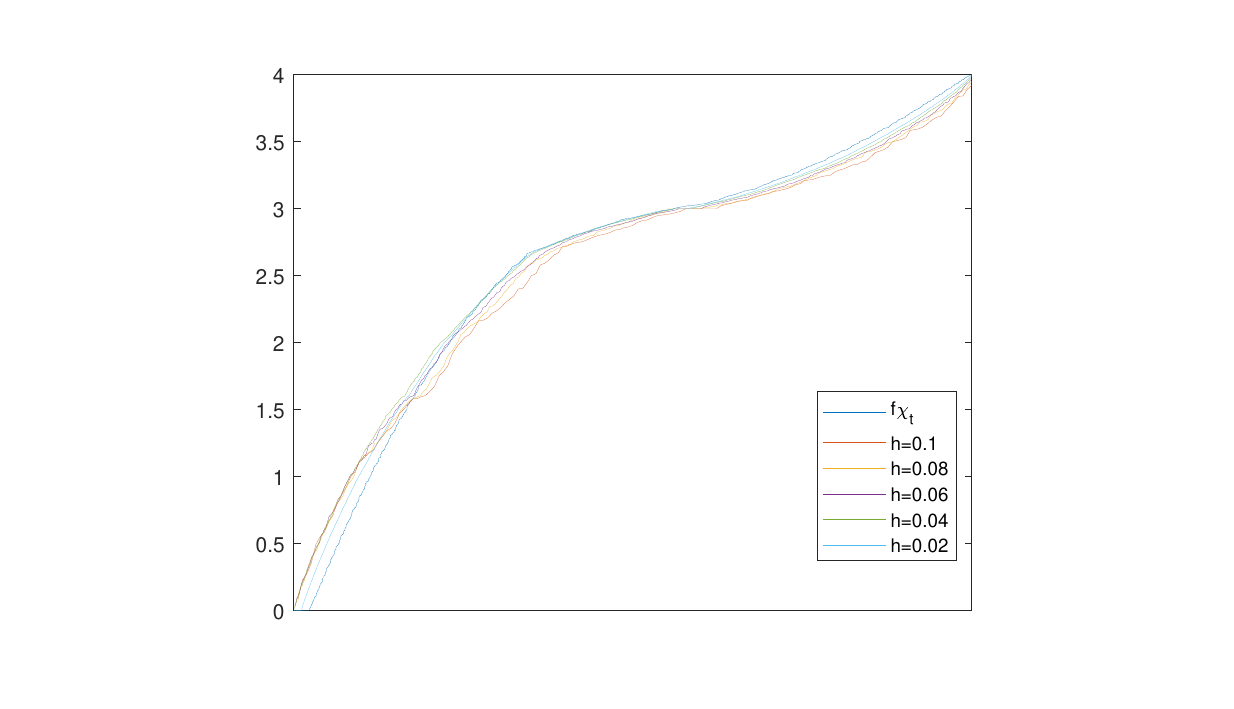} \vskip -0.5cm
  \caption{Eigenvalues distribution of $B_{n,t}$ for different $h$ values  together with the sampling of $f(\theta_1,\theta_2)=(8 -2\cos\theta_1-2\cos\theta_2-4\cos\theta_1 \cos\theta_2)/3$ and $t=6$.}
\label{Q1_Bnt_full_t6}
\end{figure}
\begin{figure}
\centering
  \includegraphics[width=\textwidth]{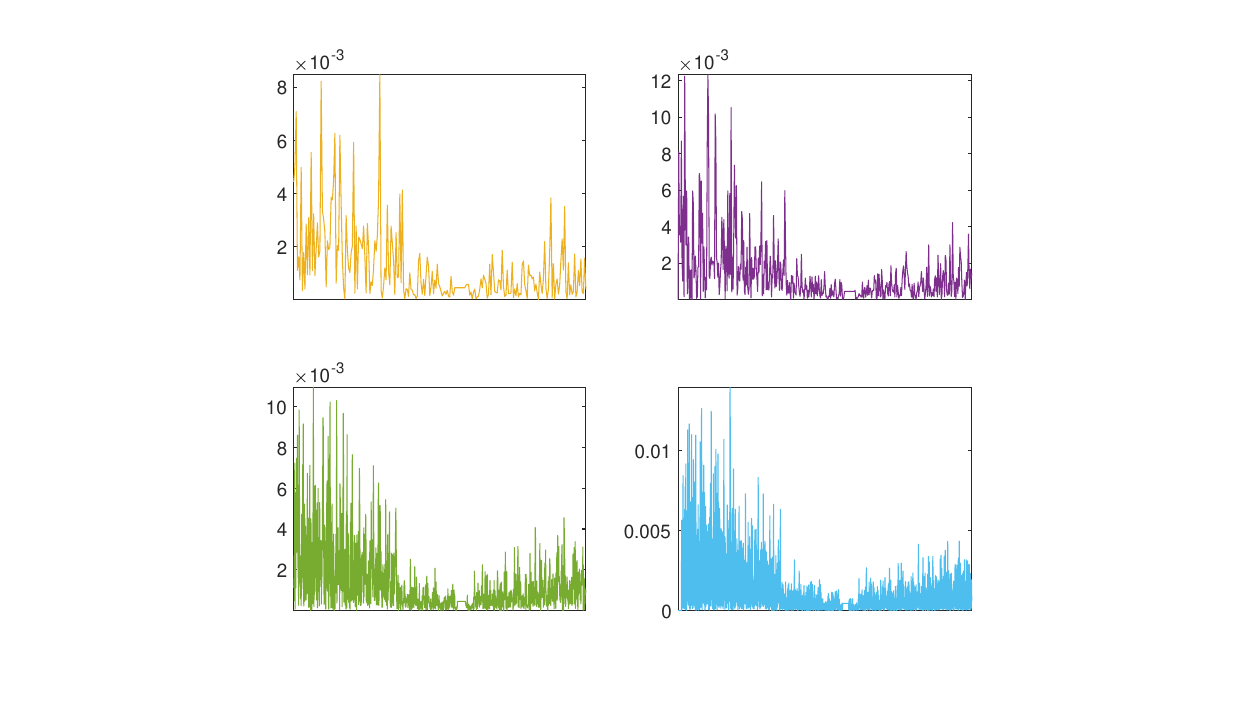} \vskip -0.5cm
  \caption{Minimal distance of eigenvalues of $B_{n,t}$ from $f(\theta_1,\theta_2)=(8 -2\cos\theta_1-2\cos\theta_2-4\cos\theta_1 \cos\theta_2)/3$ and $t=6$ for different $h$ values.}
\label{Q1_Bnt_full_t6_errore}
\end{figure}
\begin{figure}
\centering
  \includegraphics[width=\textwidth]{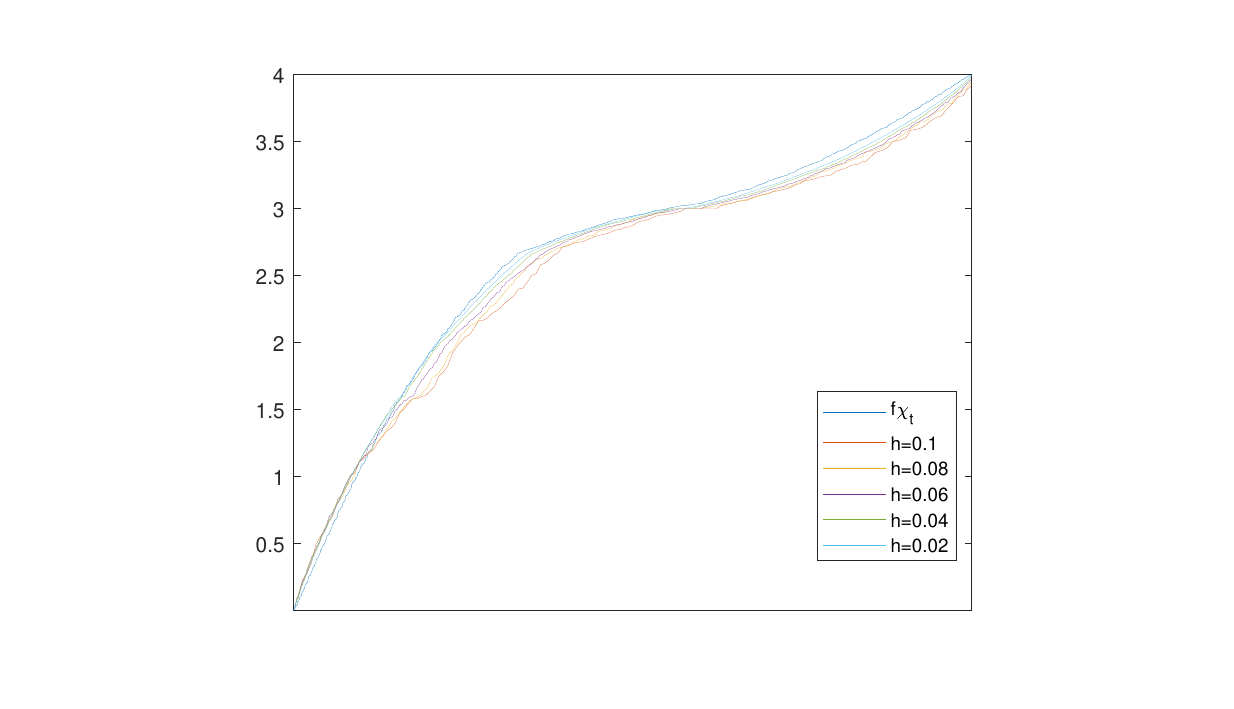} \vskip -0.5cm
  \caption{Eigenvalues distribution of $B_{n,t}$ for different $h$ values  together with the sampling of $f(\theta_1,\theta_2)=(8 -2\cos\theta_1-2\cos\theta_2-4\cos\theta_1 \cos\theta_2)/3$ and $t=8$.}
\label{Q1_Bnt_full_t8}
\end{figure}
\begin{figure}
\centering
  \includegraphics[width=\textwidth]{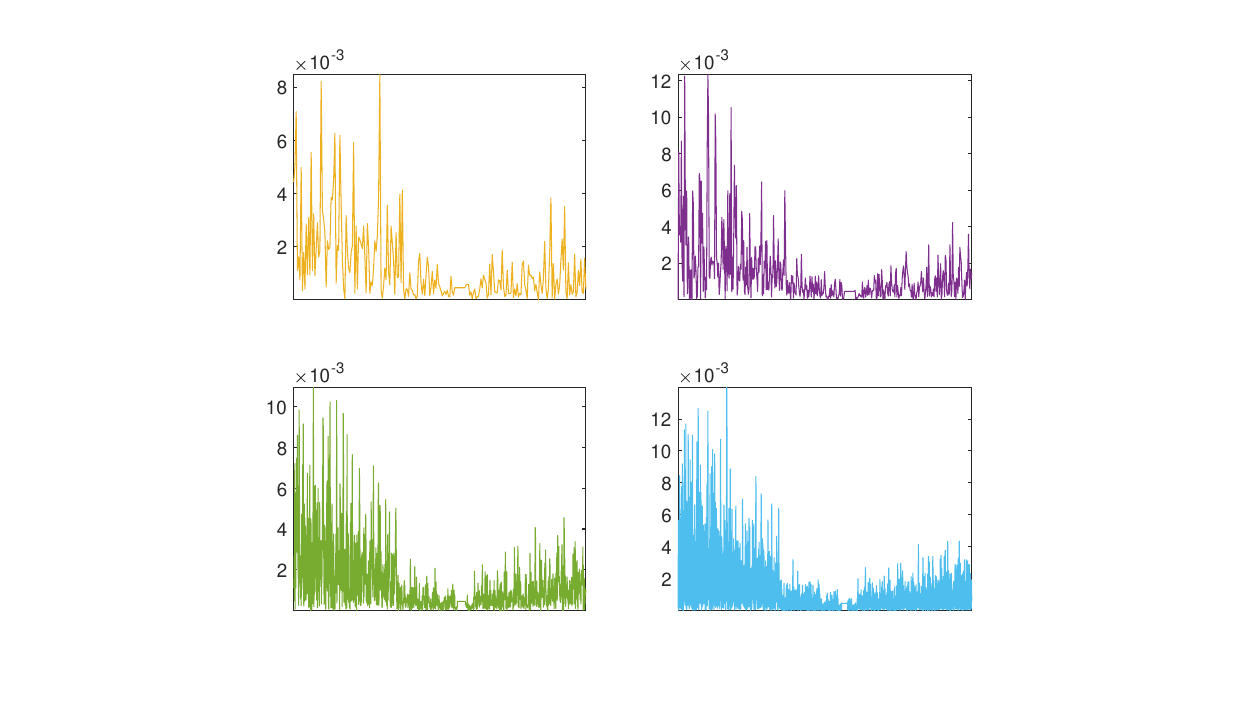} \vskip -0.5cm
  \caption{Minimal distance of eigenvalues of $B_{n,t}$ from $f(\theta_1,\theta_2)=(8 -2\cos\theta_1-2\cos\theta_2-4\cos\theta_1 \cos\theta_2)/3$ and $t=8$ for different $h$ values.}
\label{Q1_Bnt_full_t8_errore}
\end{figure}

\subsubsection{$P_2$ - $f_{{P}_2}: [-\pi,\pi]^2 \longrightarrow \mathbb{C}^{4\times 4}$}
$P_2$: Matrix-valued symbol $f_{{P}_2}: [-\pi,\pi]^2 \longrightarrow \mathbb{C}^{4\times 4}$ with
\begin{equation}\label{eq:P2symbol}
f_{{P}_2}(\theta_{1},\theta_{2})  =  \left [
\begin{array}{rr|rr}
\alpha & -\beta(1+e^{\hat{i} \theta_{1}})  & -\beta(1+e^{\hat{i} \theta_{2}})  & 0 \\
-\beta(1+e^{-\hat{i} \theta_{1}}) & \alpha                 & 0                      & -\beta(1+e^{\hat{i} \theta_{2}})\\
\hline
-\beta(1+e^{-\hat{i} \theta_{2}}) & 0                     & \alpha                 & -\beta(1+e^{\hat{i} \theta_{1}})\\
0                      & -\beta(1+e^{-\hat{i} \theta_{2}}) & -\beta(1+e^{-\hat{i} \theta_{1}}) & \gamma +\frac{\beta}{2} (\cos(\theta_{1})+\cos(\theta_{2}))\\
\end{array}
\right ]
\end{equation}
with $\alpha={16}/{3}$, $\beta={4}/{3}$, and $\gamma=4$.\\

\begin{figure}
\centering
  \includegraphics[width=\textwidth]{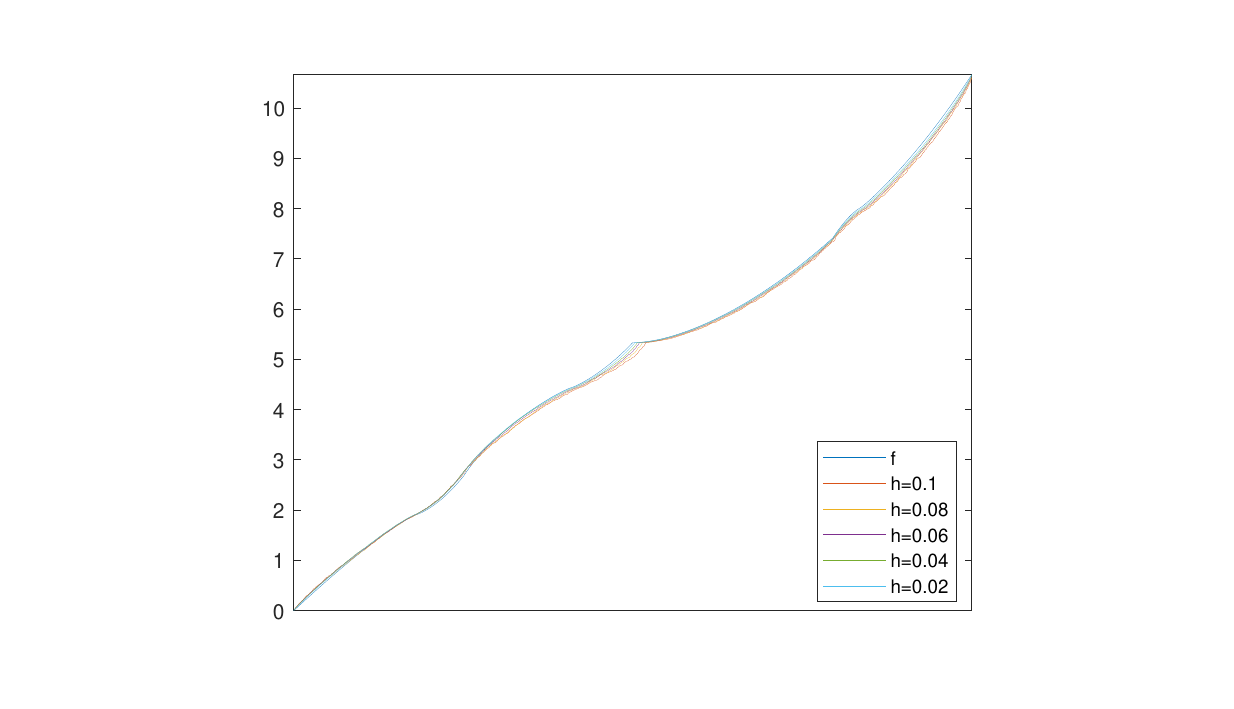} \vskip -0.5cm
  \caption{Eigenvalues distribution of $A_n$ for different $h$ values  together with the sampling of $f(\theta_1,\theta_2)=f_{{P}_2}(\theta_{1},\theta_{2})$.}
\label{P2_A}
\end{figure}
\begin{figure}
\centering
  \includegraphics[width=\textwidth]{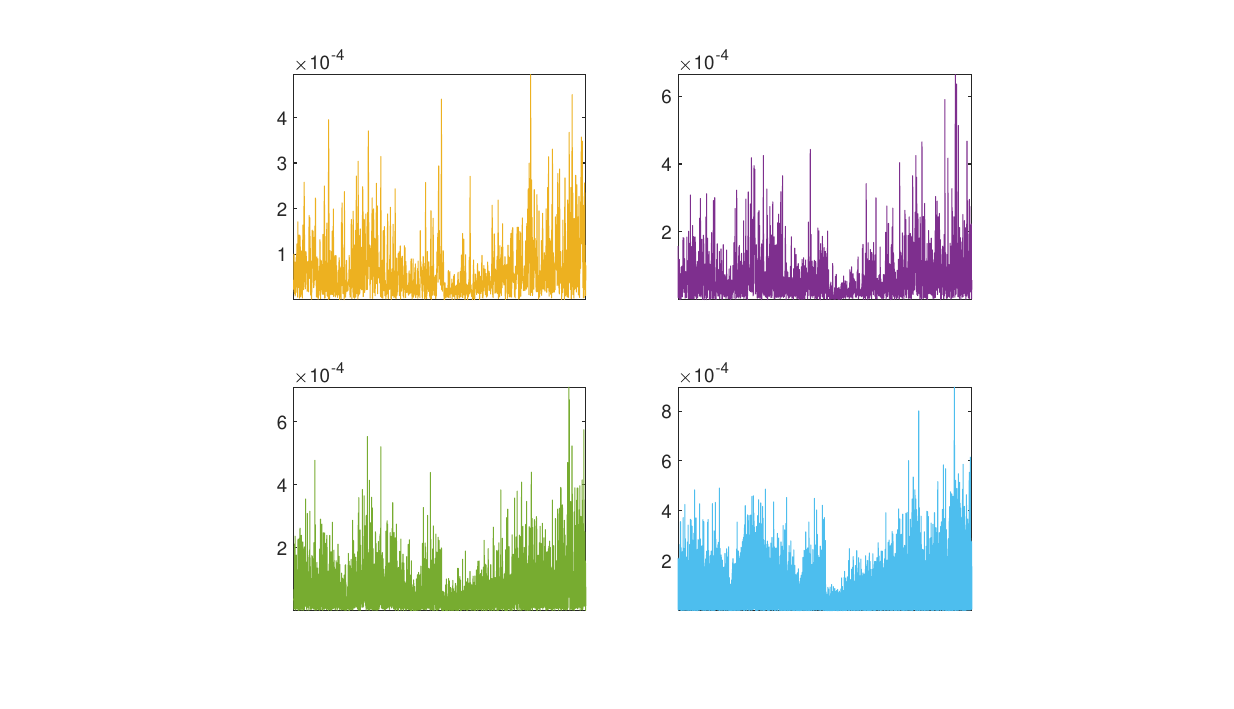} \vskip -0.5cm
  \caption{Minimal distance of eigenvalues of $A_n$ from $f(\theta_1,\theta_2)=f_{{P}_2}(\theta_{1},\theta_{2})$ for different $h$ values.}
\label{P2_A_errore}
\end{figure}
\begin{figure}
\centering
  \includegraphics[width=\textwidth]{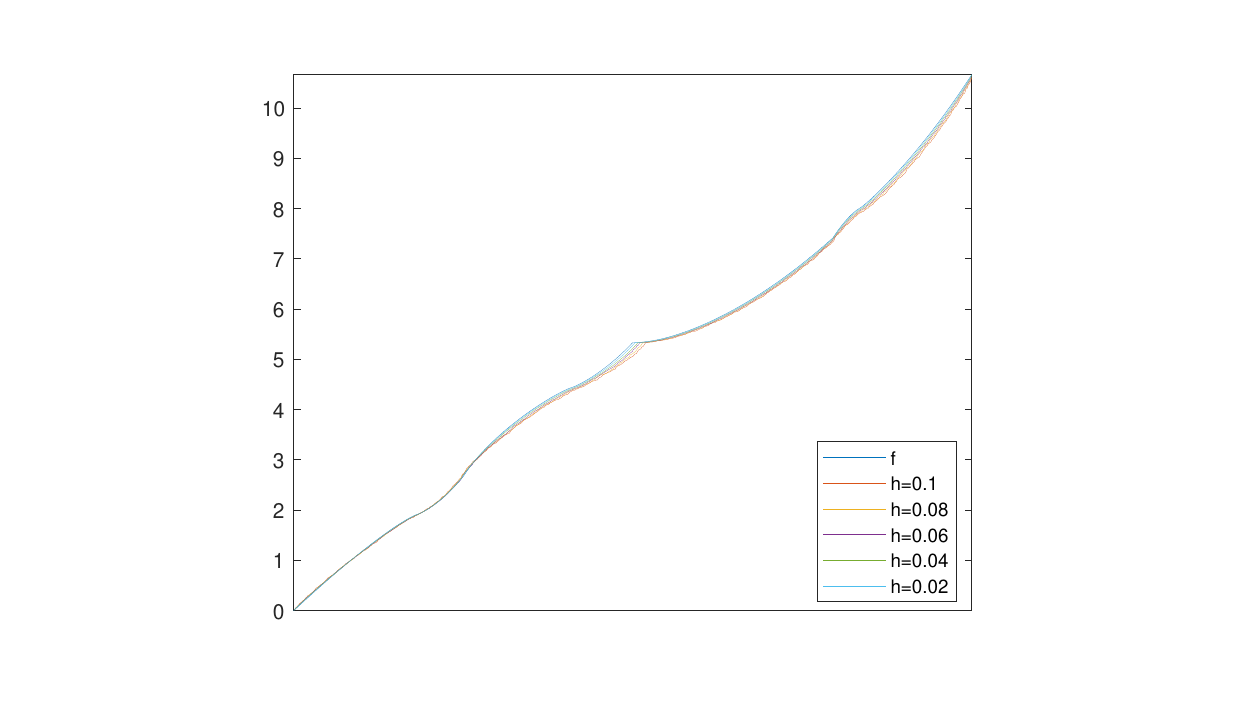} \vskip -0.5cm
  \caption{Eigenvalues distribution of $C_{n,t}$ for different $h$ values  together with the sampling of $f(\theta_1,\theta_2)=f_{{P}_2}(\theta_{1},\theta_{2})$ and $t=2$.}
\label{P2_Bnt_small_t2}
\end{figure}
\begin{figure}
\centering
  \includegraphics[width=\textwidth]{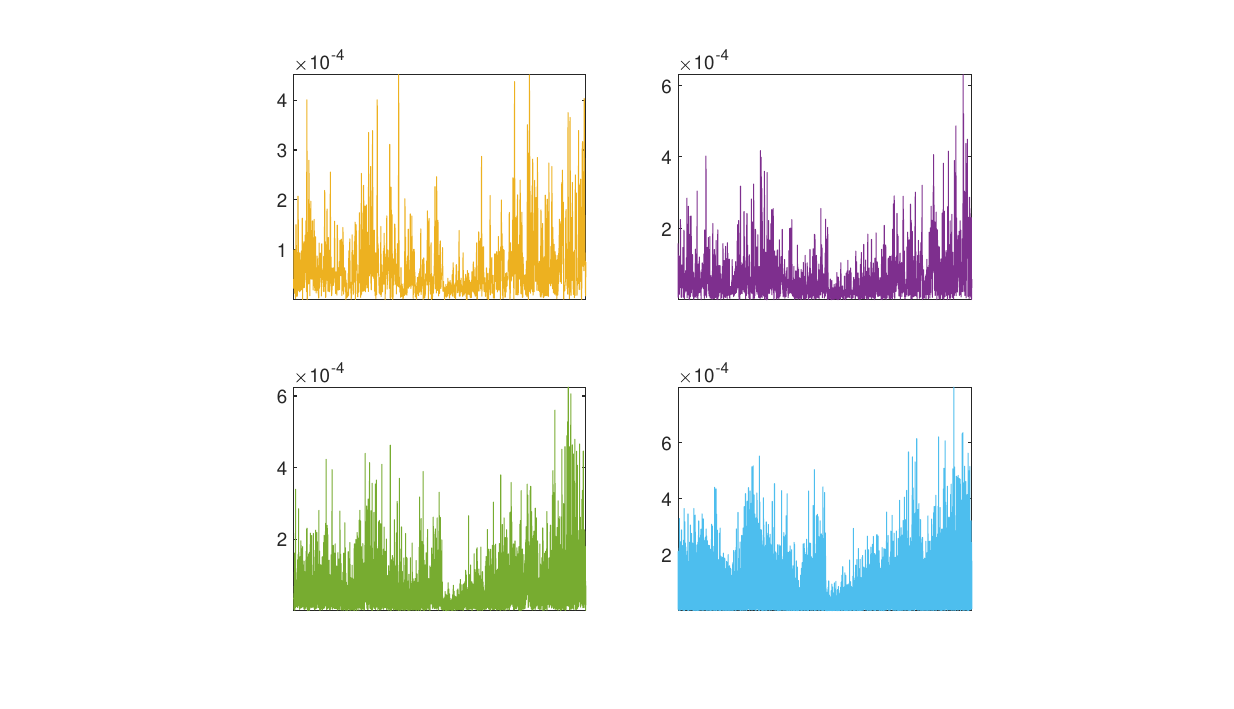} \vskip -0.5cm
  \caption{Minimal distance of eigenvalues of $C_{n,t}$ from $f(\theta_1,\theta_2)=f_{{P}_2}(\theta_{1},\theta_{2})$ and $t=2$ for different $h$ values.}
\label{P2_Bnt_small_t2_errore}
\end{figure}
\begin{figure}
\centering
  \includegraphics[width=\textwidth]{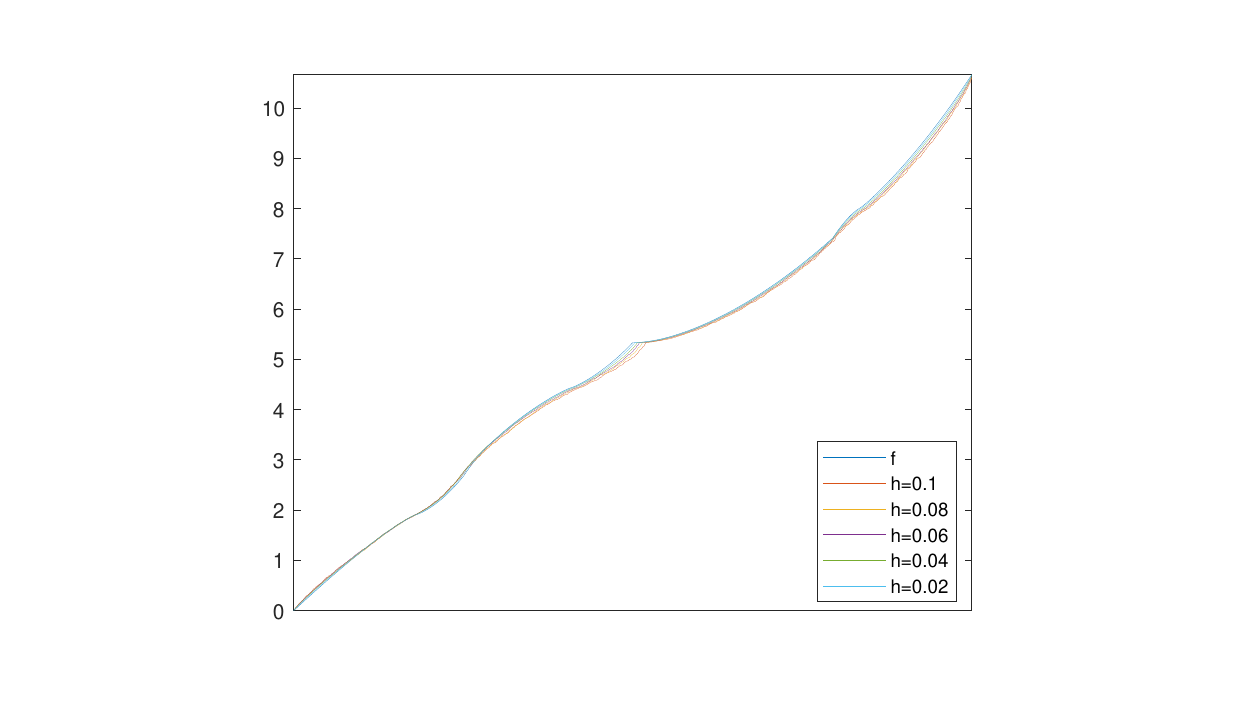} \vskip -0.5cm
  \caption{Eigenvalues distribution of $C_{n,t}$ for different $h$ values  together with the sampling of $f(\theta_1,\theta_2)=f_{{P}_2}(\theta_{1},\theta_{2})$ and $t=4$.}
\label{P2_Bnt_small_t4}
\end{figure}
\begin{figure}
\centering
  \includegraphics[width=\textwidth]{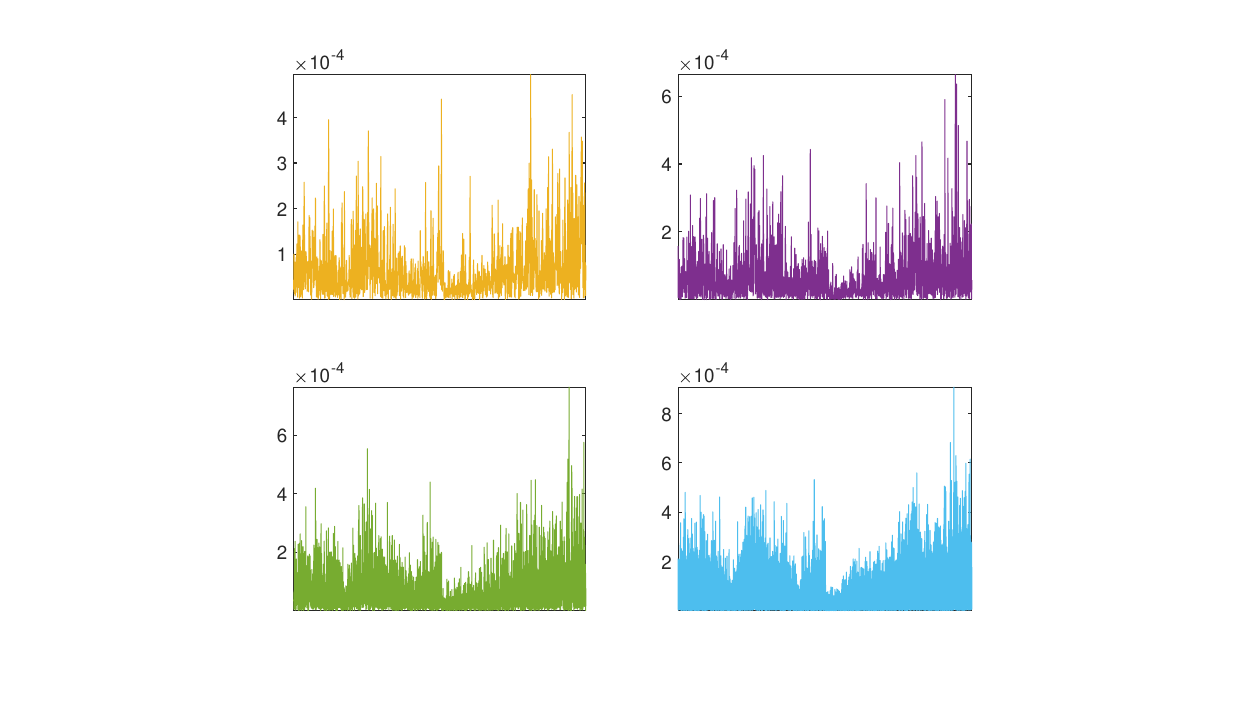} \vskip -0.5cm
  \caption{Minimal distance of eigenvalues of $C_{n,t}$ from $f(\theta_1,\theta_2)=f_{{P}_2}(\theta_{1},\theta_{2})$ and $t=4$ for different $h$ values.}
\label{P2_Bnt_small_t4_errore}
\end{figure}
\begin{figure}
\centering
  \includegraphics[width=\textwidth]{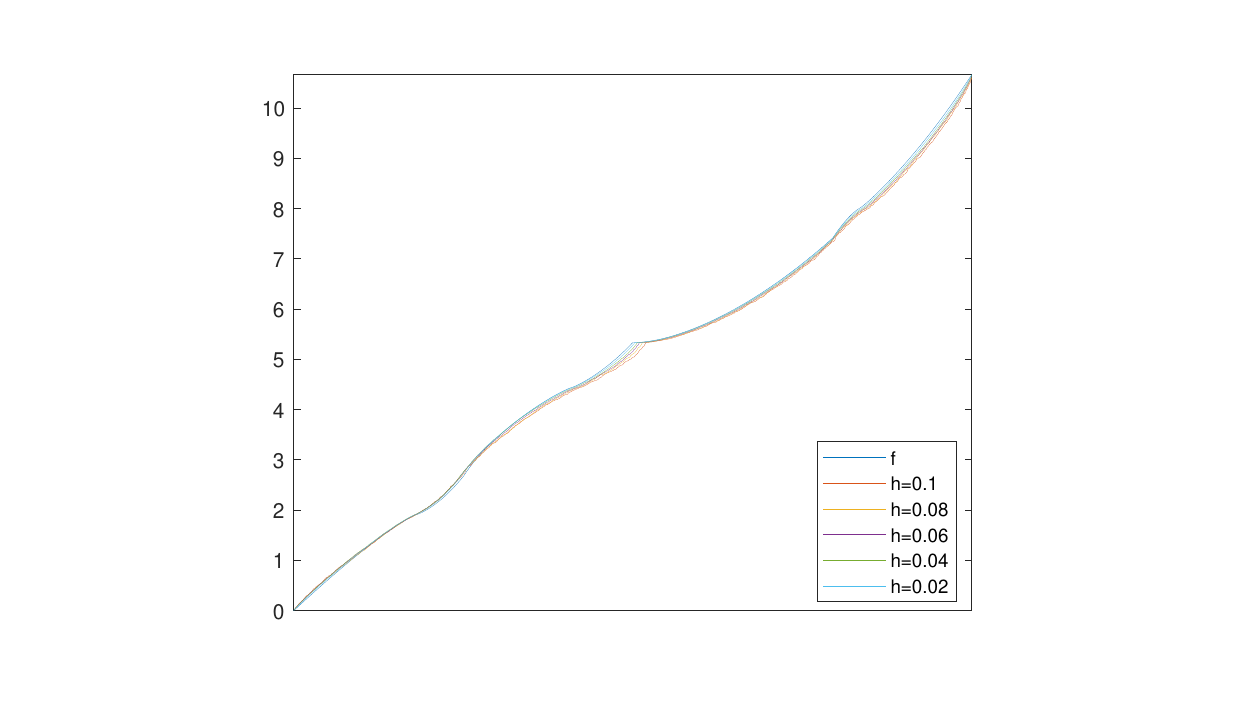} \vskip -0.5cm
  \caption{Eigenvalues distribution of $C_{n,t}$ for different $h$ values  together with the sampling of $f(\theta_1,\theta_2)=f_{{P}_2}(\theta_{1},\theta_{2})$ and $t=6$.}
\label{P2_Bnt_small_t6}
\end{figure}
\begin{figure}
\centering
  \includegraphics[width=\textwidth]{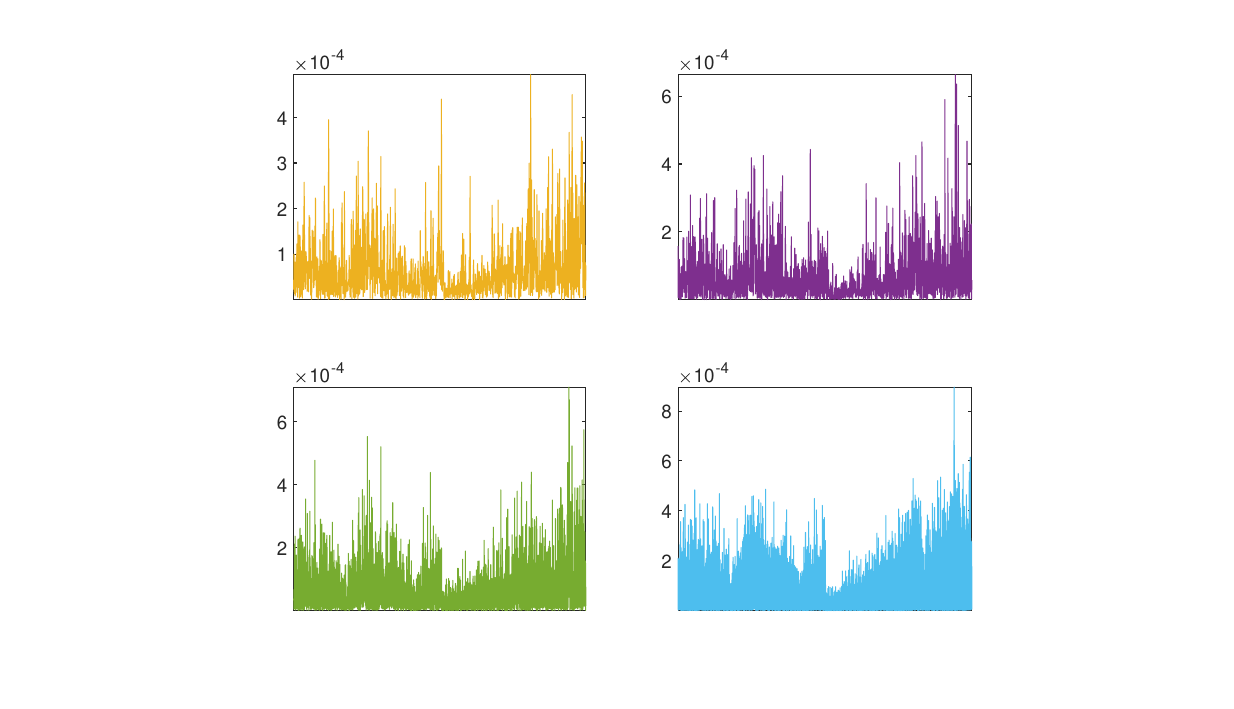} \vskip -0.5cm
  \caption{Minimal distance of eigenvalues of $C_{n,t}$ from $f(\theta_1,\theta_2)=f_{{P}_2}(\theta_{1},\theta_{2})$ and $t=6$ for different $h$ values.}
\label{P2_Bnt_small_t6_errore}
\end{figure}
\begin{figure}
\centering
  \includegraphics[width=\textwidth]{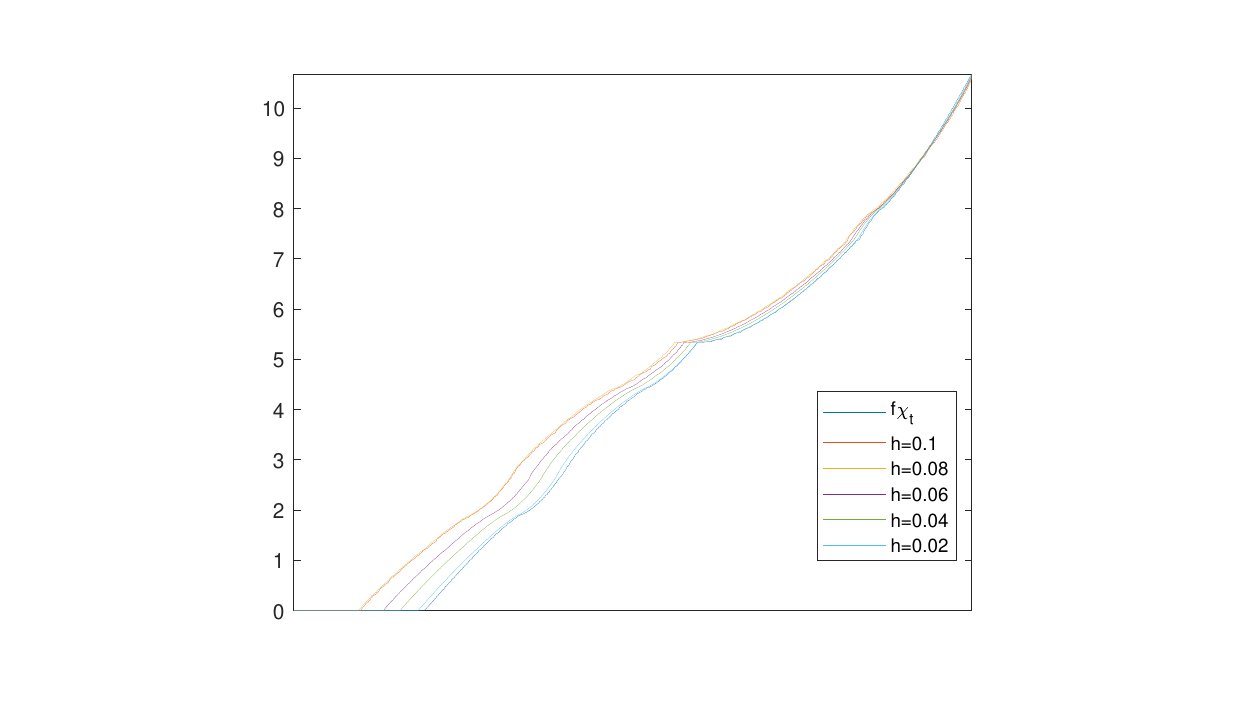} \vskip -0.5cm
  \caption{Eigenvalues distribution of $B_{n,t}$ for different $h$ values  together with the sampling of $f(\theta_1,\theta_2)=f_{{P}_2}(\theta_{1},\theta_{2})$ and $t=2$.}
\label{P2_Bnt_full_t2}
\end{figure}
\begin{figure}
\centering
  \includegraphics[width=\textwidth]{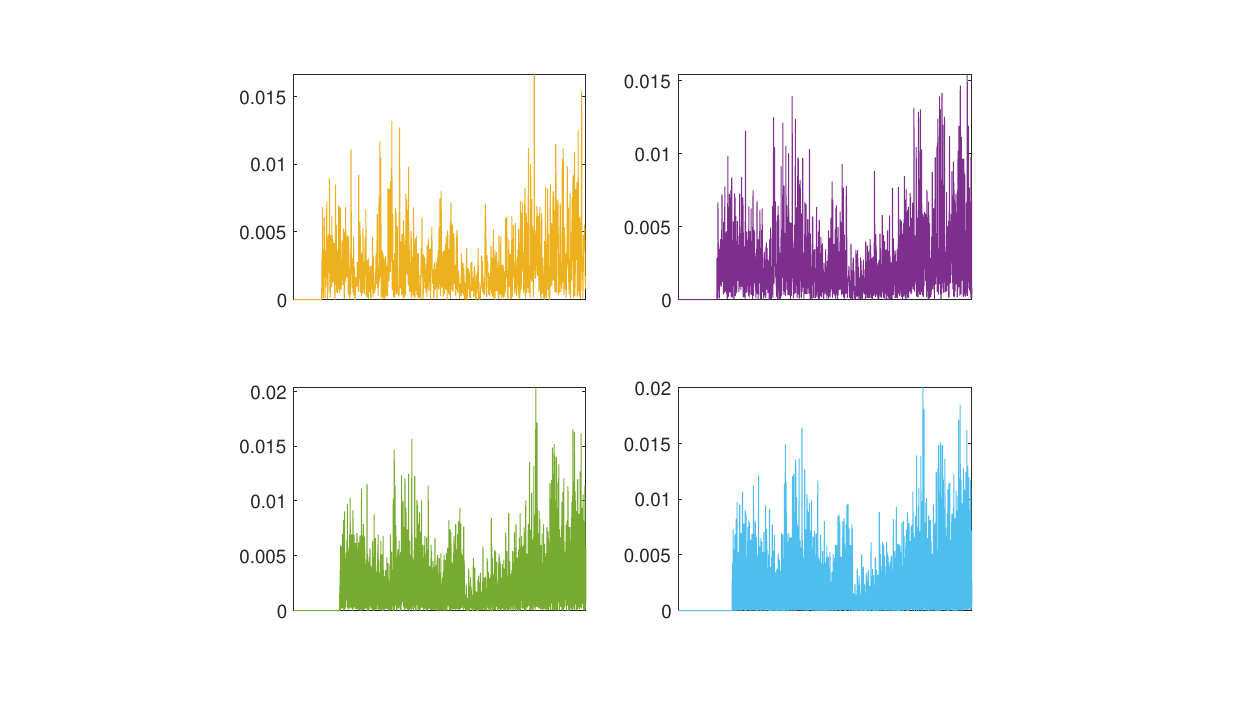} \vskip -0.5cm
  \caption{Minimal distance of eigenvalues of $B_{n,t}$ from $f(\theta_1,\theta_2)=f_{{P}_2}(\theta_{1},\theta_{2})$ and $t=2$ for different $h$ values.}
\label{P2_Bnt_full_t2_errore}
\end{figure}
\begin{figure}
\centering
  \includegraphics[width=\textwidth]{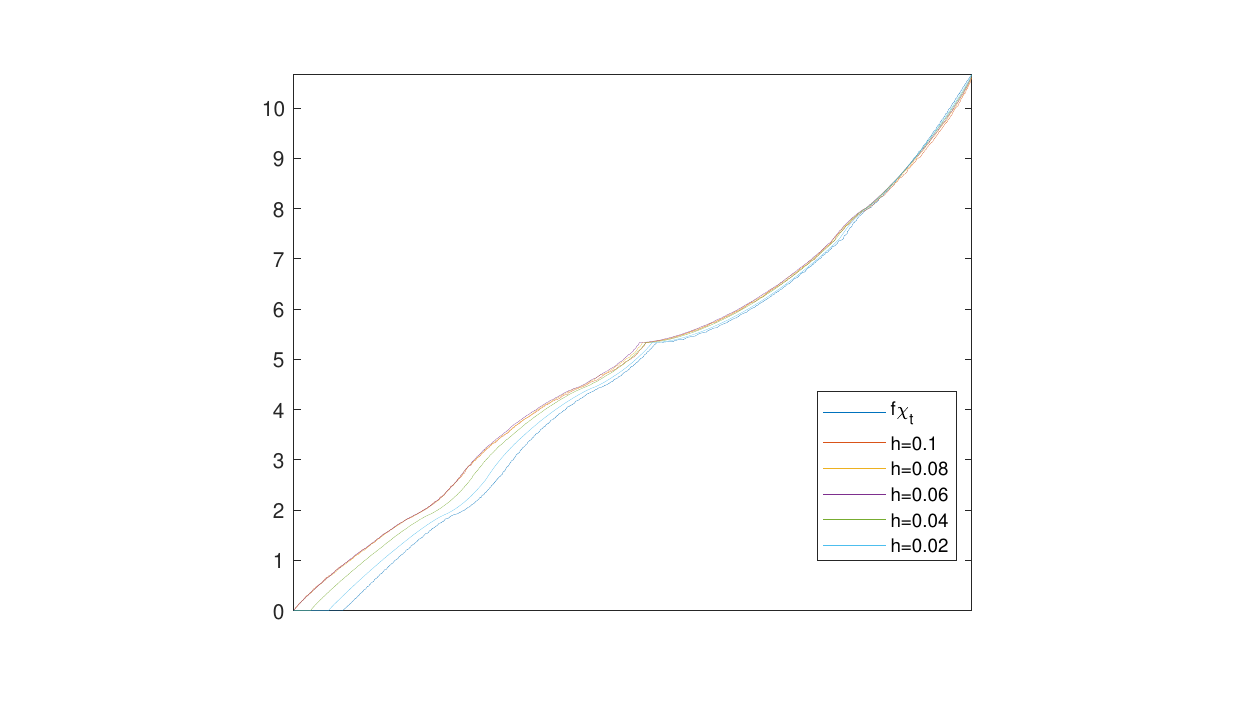} \vskip -0.5cm
  \caption{Eigenvalues distribution of $B_{n,t}$ for different $h$ values  together with the sampling of $f(\theta_1,\theta_2)=f_{{P}_2}(\theta_{1},\theta_{2})$ and $t=4$.}
\label{P2_Bnt_full_t4}
\end{figure}
\begin{figure}
\centering
  \includegraphics[width=\textwidth]{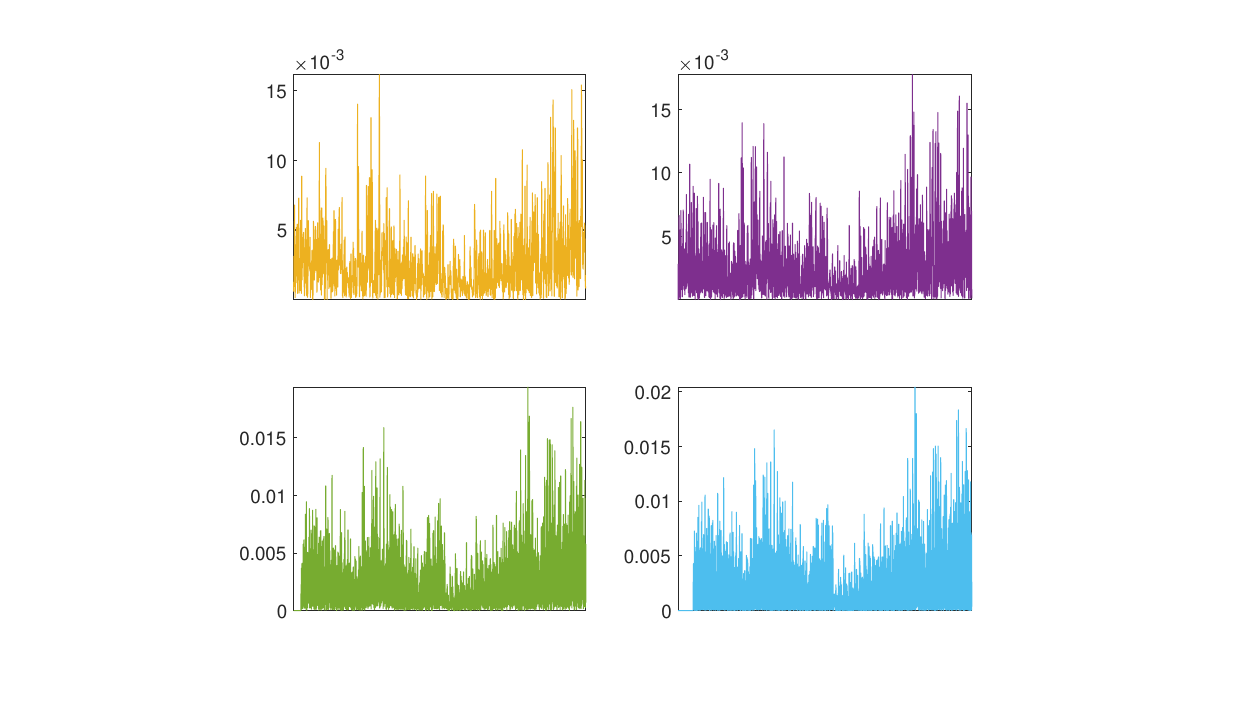} \vskip -0.5cm
  \caption{Minimal distance of eigenvalues of $B_{n,t}$ from $f(\theta_1,\theta_2)=f_{{P}_2}(\theta_{1},\theta_{2})$ and $t=4$ for different $h$ values.}
\label{P2_Bnt_full_t4_errore}
\end{figure}
\begin{figure}
\centering
  \includegraphics[width=\textwidth]{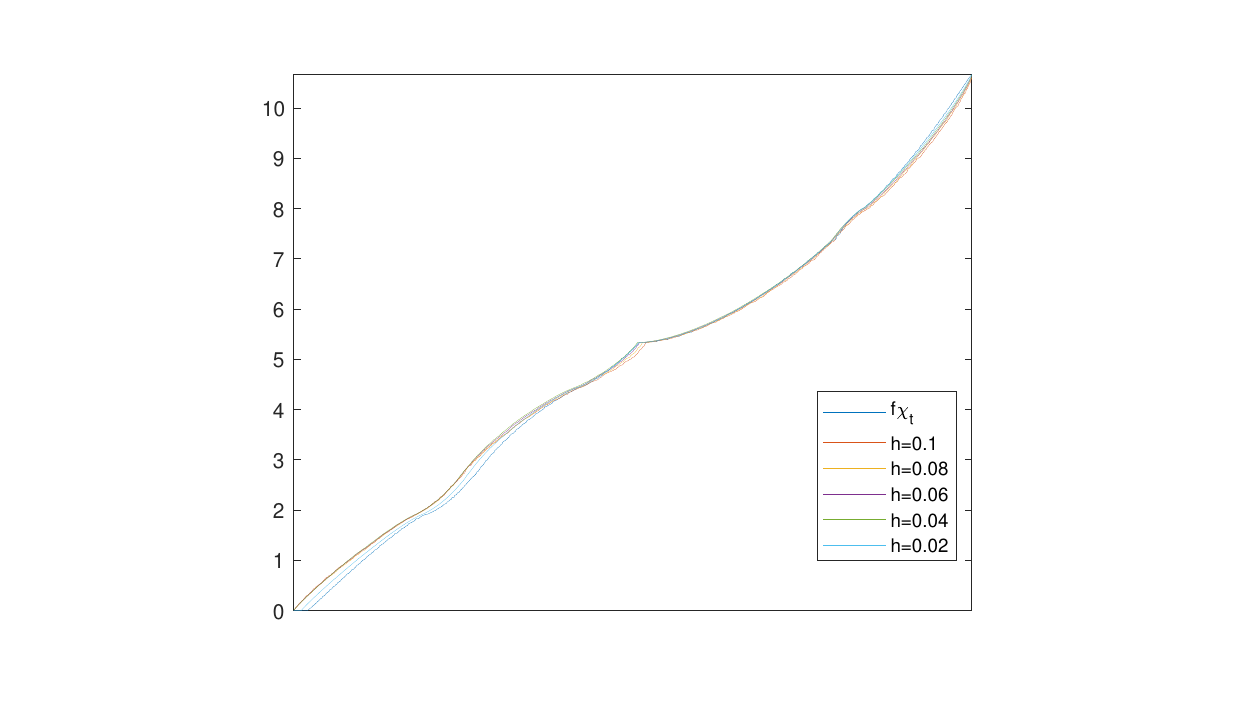} \vskip -0.5cm
  \caption{Eigenvalues distribution of $B_{n,t}$ for different $h$ values  together with the sampling of $f(\theta_1,\theta_2)=f_{{P}_2}(\theta_{1},\theta_{2})$ and $t=6$.}
\label{P2_Bnt_full_t6}
\end{figure}
\begin{figure}
\centering
  \includegraphics[width=\textwidth]{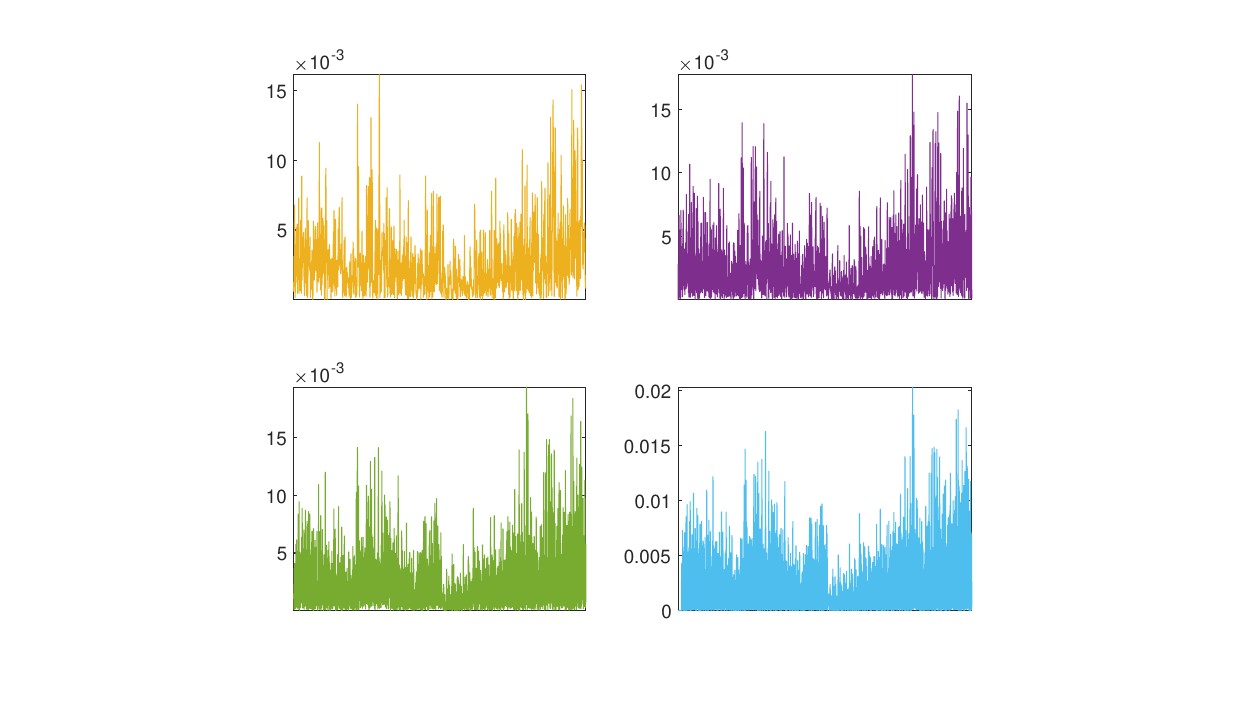} \vskip -0.5cm
  \caption{Minimal distance of eigenvalues of $B_{n,t}$ from $f(\theta_1,\theta_2)=f_{{P}_2}(\theta_{1},\theta_{2})$ and $t=6$ for different $h$ values.}
\label{P2_Bnt_full_t6_errore}
\end{figure}
\begin{figure}
\centering
  \includegraphics[width=\textwidth]{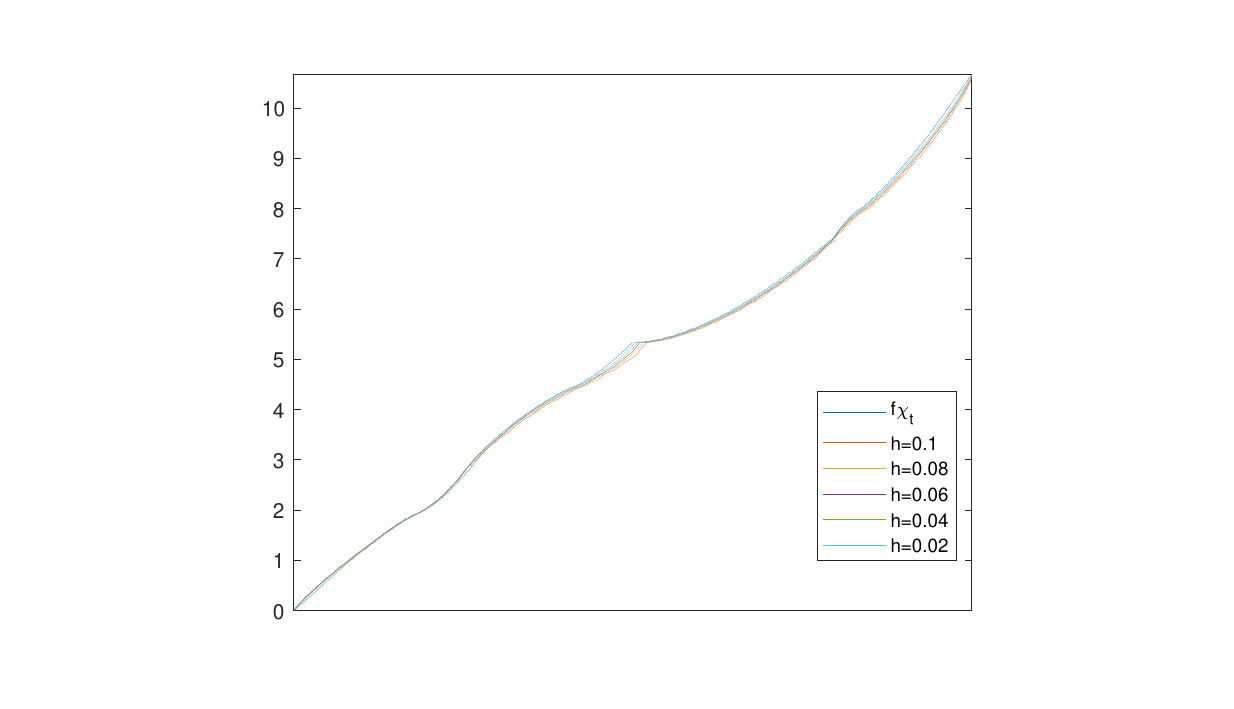} \vskip -0.5cm
  \caption{Eigenvalues distribution of $B_{n,t}$ for different $h$ values  together with the sampling of $f(\theta_1,\theta_2)=f_{{P}_2}(\theta_{1},\theta_{2})$ and $t=8$.}
\label{P2_Bnt_full_t8}
\end{figure}
\begin{figure}
\centering
  \includegraphics[width=\textwidth]{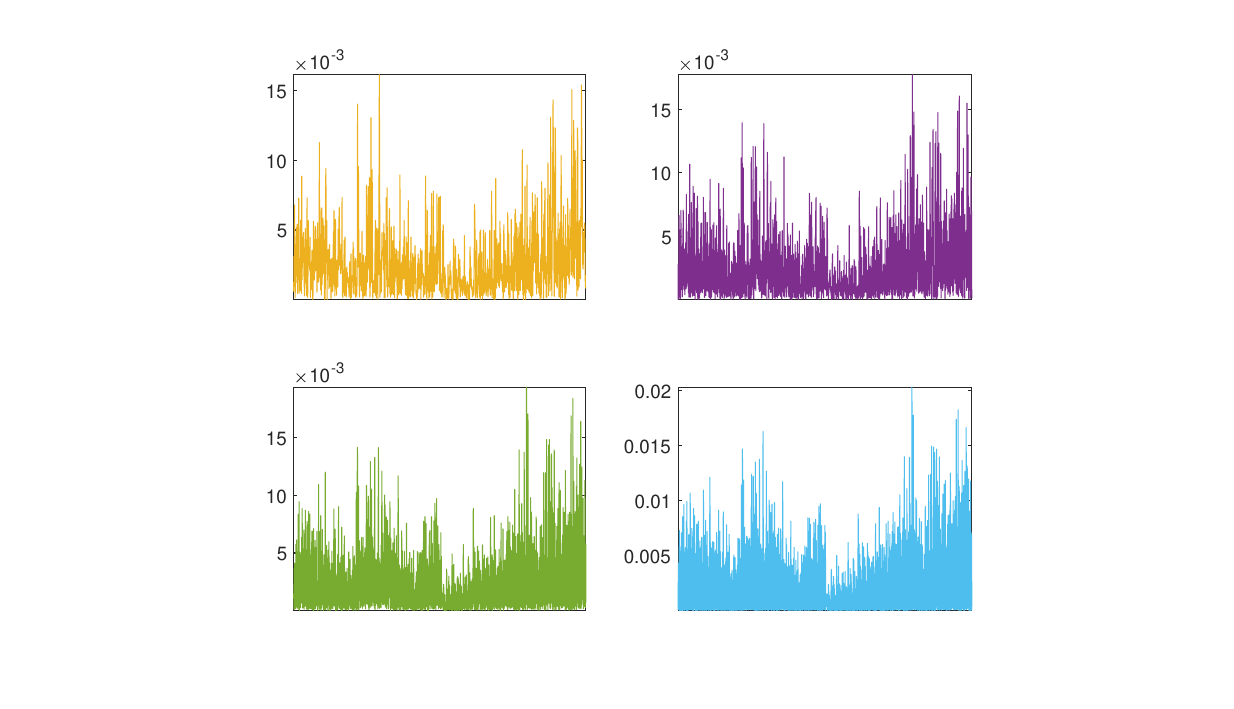} \vskip -0.5cm
  \caption{Minimal distance of eigenvalues of $B_{n,t}$ from $f(\theta_1,\theta_2)=f_{{P}_2}(\theta_{1},\theta_{2})$ and $t=8$ for different $h$ values.}
\label{P2_Bnt_full_t8_errore}
\end{figure}


\section{$P_1$ - $f(\theta_1,\theta_2)=(2-2\cos \theta_1)+(2-2\cos \theta_2)$ variable coefficient}
$P_1$: $f(\theta_1,\theta_2,x,y)=[(2-2\cos \theta_1)+(2-2\cos \theta_2)]a(x,y)$\\
$a(x,y)= ( 10 + x^2 + 2y^2+ \sin^2(x+y)) / ( 1 + x^2 + y^2)$ \\

\newpage
\begin{figure}
\centering
  \includegraphics[width=\textwidth]{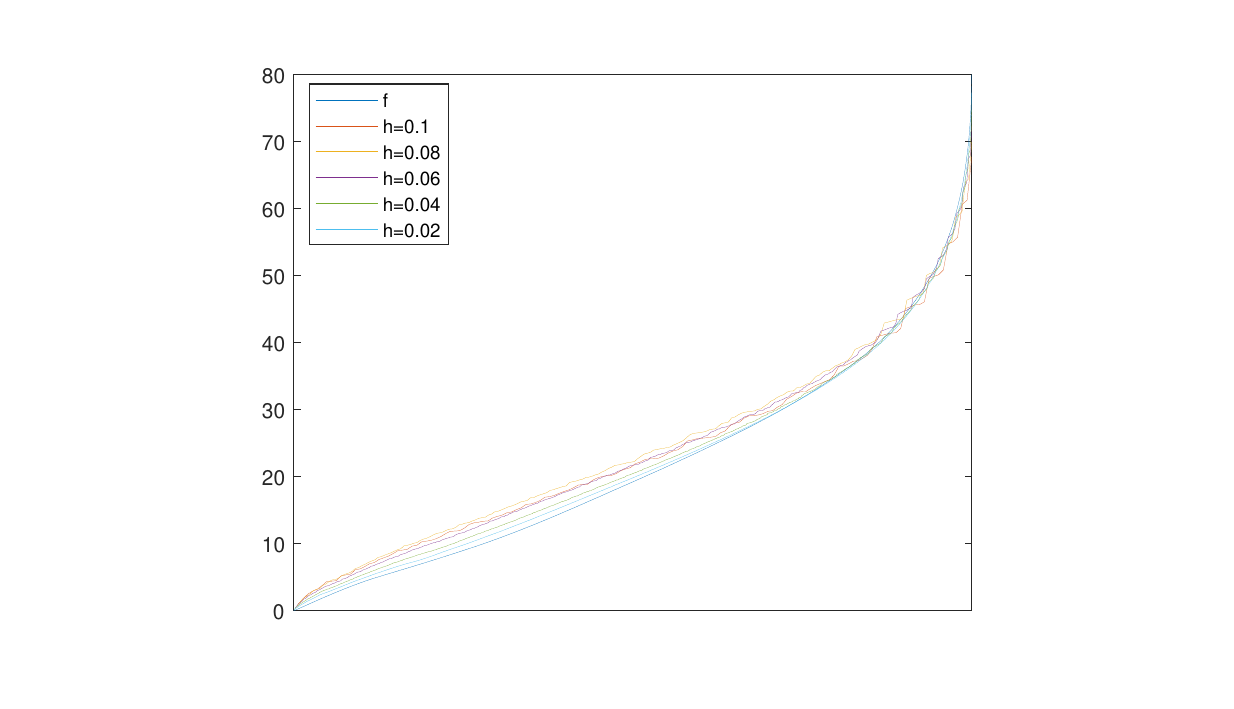} \vskip -0.5cm
  \caption{Eigenvalues distribution of $A_n(a)$ for different $h$ values  together with the sampling of $f(\theta_1,\theta_2,x,y)=[(2-2\cos \theta_1)+(2-2\cos \theta_2)]a(x,y)$.}
\label{FDcoeff_A}
\end{figure}
\begin{figure}
\centering
  \includegraphics[width=\textwidth]{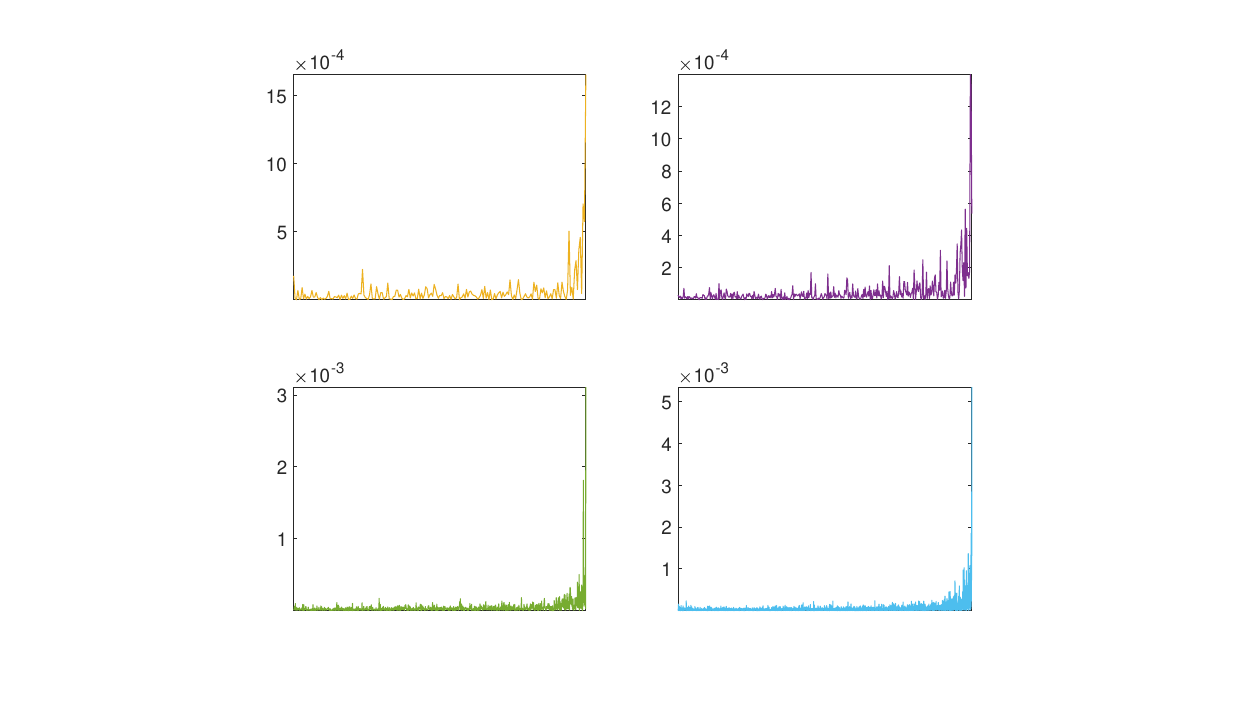} \vskip -0.5cm
  \caption{Minimal distance of eigenvalues of $A_n(a)$ from $f(\theta_1,\theta_2,x,y)=[(2-2\cos \theta_1)+(2-2\cos \theta_2)]a(x,y)$ for different $h$ values.}
\label{FDcoeff_A_errore}
\end{figure}
%
\begin{figure}
\centering
  \includegraphics[width=\textwidth]{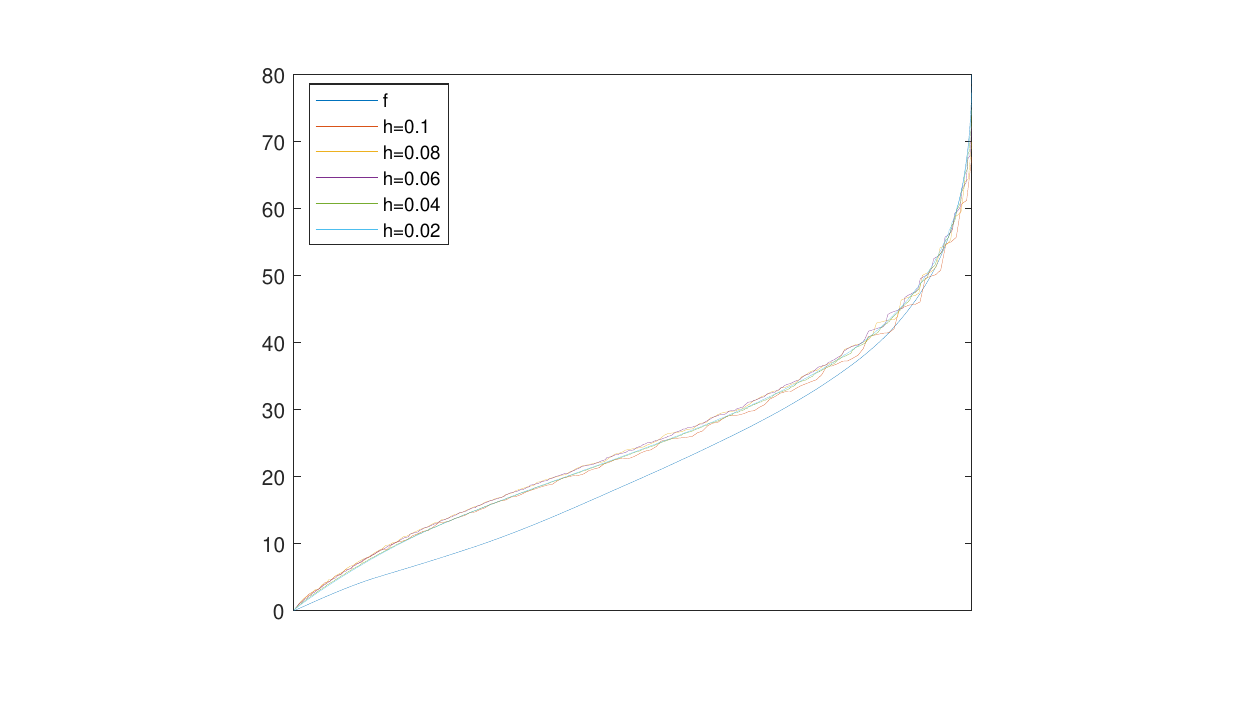} \vskip -0.5cm
  \caption{Eigenvalues distribution of $C_{n,t}(a)$ for different $h$ values  together with the sampling of $f(\theta_1,\theta_2,x,y)=[(2-2\cos \theta_1)+(2-2\cos \theta_2)]a(x,y)$ and $t=2$.}
\label{FDcoeff_Bnt_small_t2}
\end{figure}
\begin{figure}
\centering
  \includegraphics[width=\textwidth]{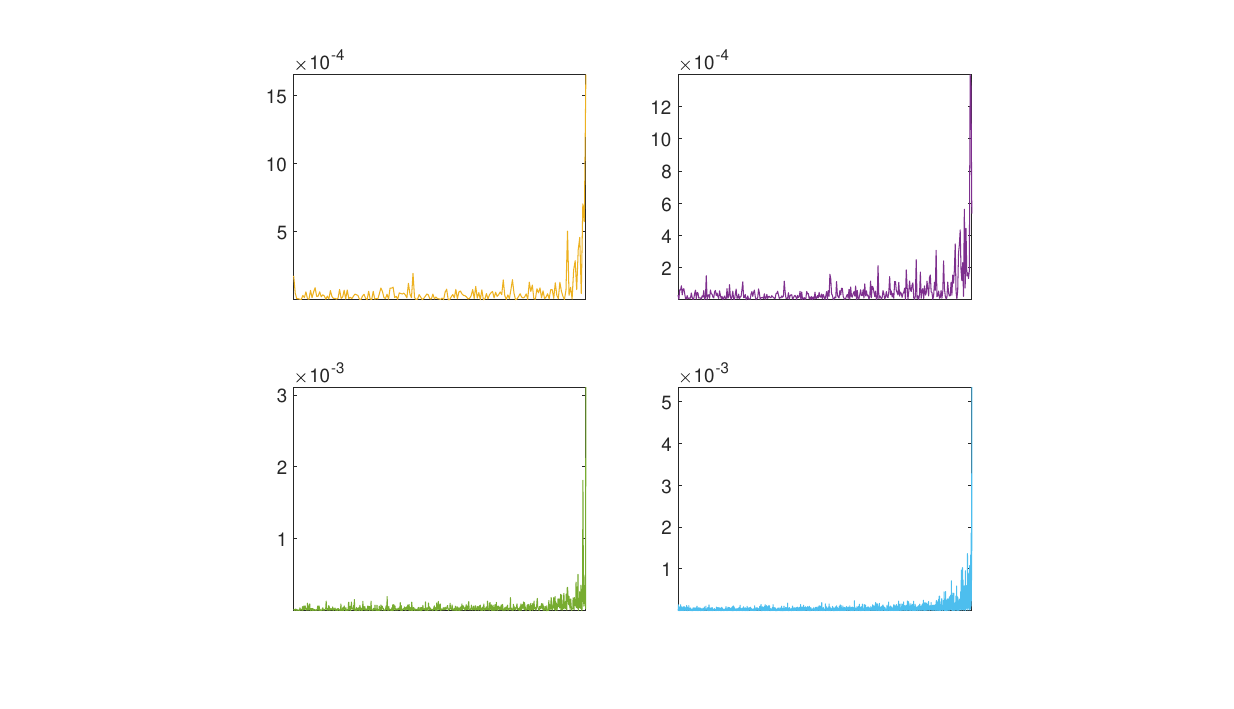} \vskip -0.5cm
  \caption{Minimal distance of eigenvalues of $C_{n,t}(a)$ from $f(\theta_1,\theta_2,x,y)=[(2-2\cos \theta_1)+(2-2\cos \theta_2)]a(x,y)$ and $t=2$ for different $h$ values.}
\label{FDcoeff_Bnt_small_t2_errore}
\end{figure}
\begin{figure}
\centering
  \includegraphics[width=\textwidth]{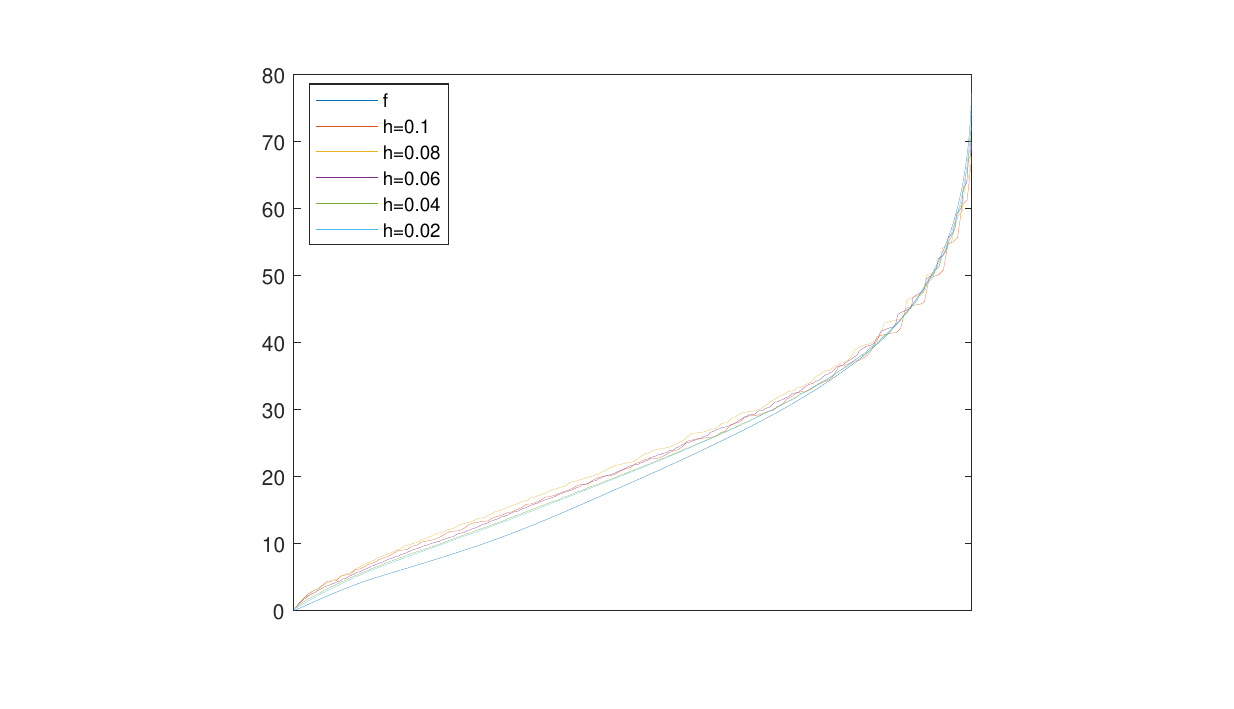} \vskip -0.5cm
  \caption{Eigenvalues distribution of $C_{n,t}(a)$ for different $h$ values  together with the sampling of $f(\theta_1,\theta_2,x,y)=[(2-2\cos \theta_1)+(2-2\cos \theta_2)]a(x,y)$ and $t=4$.}
\label{FDcoeff_Bnt_small_t4}
\end{figure}
\begin{figure}
\centering
  \includegraphics[width=\textwidth]{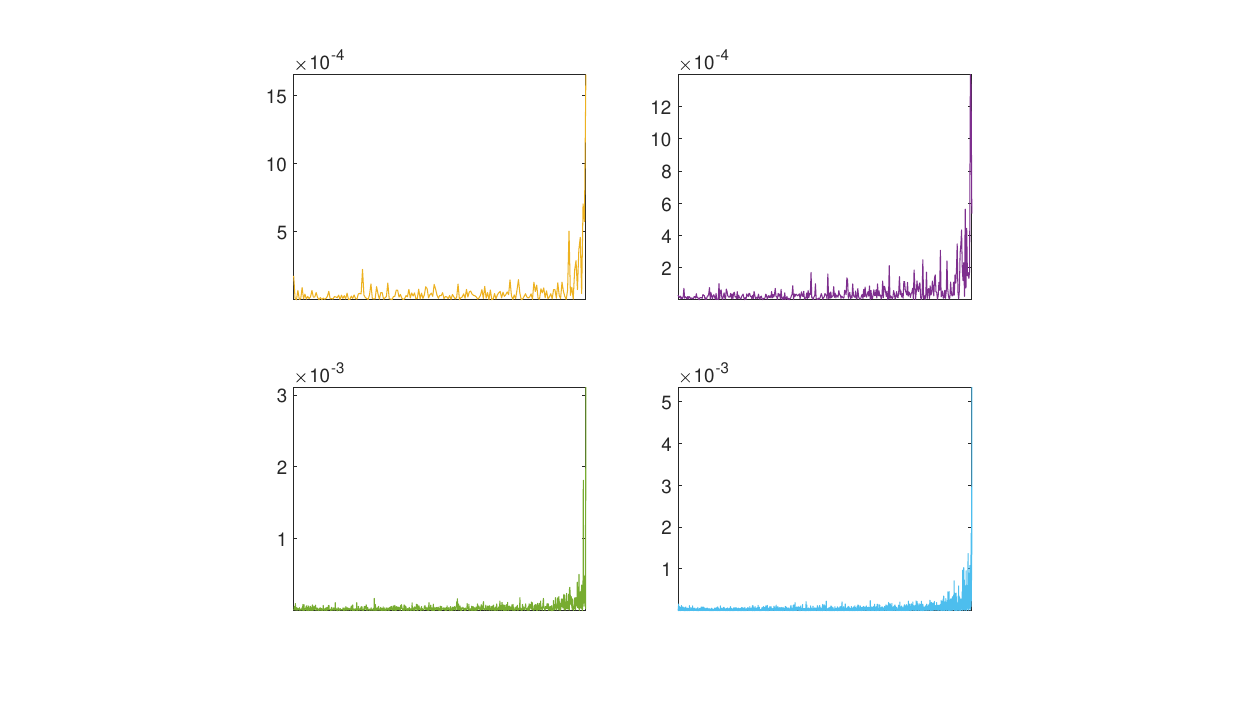} \vskip -0.5cm
  \caption{Minimal distance of eigenvalues of $C_{n,t}(a)$ from $f(\theta_1,\theta_2,x,y)=[(2-2\cos \theta_1)+(2-2\cos \theta_2)]a(x,y)$ and $t=4$ for different $h$ values.}
\label{FDcoeff_Bnt_small_t4_errore}
\end{figure}
\begin{figure}
\centering
  \includegraphics[width=\textwidth]{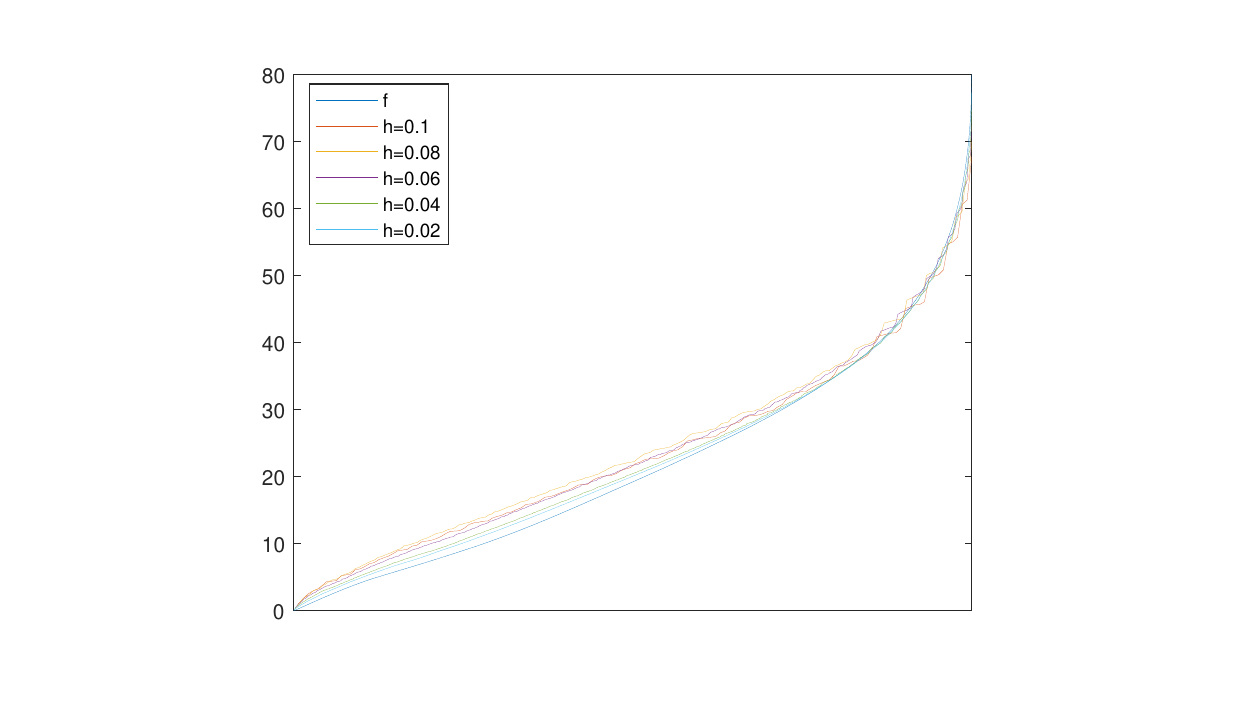} \vskip -0.5cm
  \caption{Eigenvalues distribution of $C_{n,t}(a)$ for different $h$ values  together with the sampling of $f(\theta_1,\theta_2,x,y)=[(2-2\cos \theta_1)+(2-2\cos \theta_2)]a(x,y)$ and $t=6$.}
\label{FDcoeff_Bnt_small_t6}
\end{figure}
\begin{figure}
\centering
  \includegraphics[width=\textwidth]{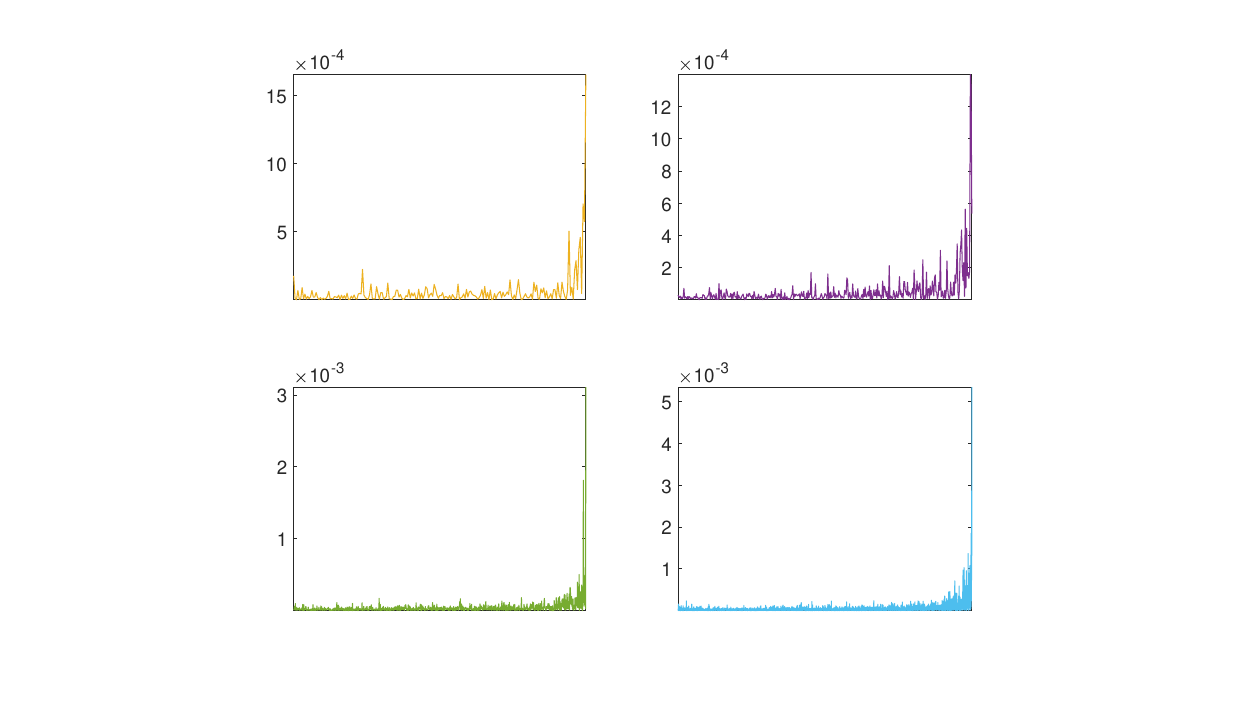} \vskip -0.5cm
  \caption{Minimal distance of eigenvalues of $C_{n,t}(a)$ from $f(\theta_1,\theta_2,x,y)=[(2-2\cos \theta_1)+(2-2\cos \theta_2)]a(x,y)$ and $t=6$ for different $h$ values.}
\label{FDcoeff_Bnt_small_t6_errore}
\end{figure}
\begin{figure}
\centering
  \includegraphics[width=\textwidth]{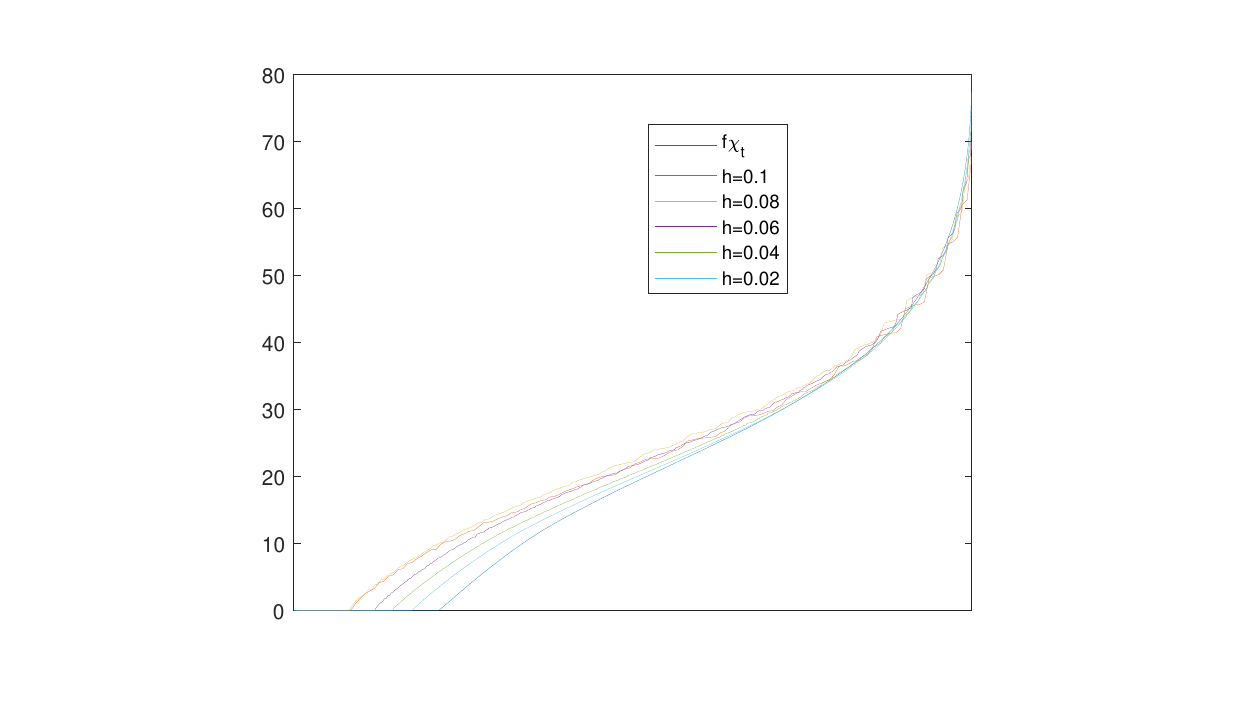} \vskip -0.5cm
  \caption{Eigenvalues distribution of $B_{n,t}(a)$ for different $h$ values  together with the sampling of $f(\theta_1,\theta_2,x,y)=[(2-2\cos \theta_1)+(2-2\cos \theta_2)]a(x,y)$ and $t=2$.}
\label{FDcoeff_Bnt_full_t2}
\end{figure}
\begin{figure}
\centering
  \includegraphics[width=\textwidth]{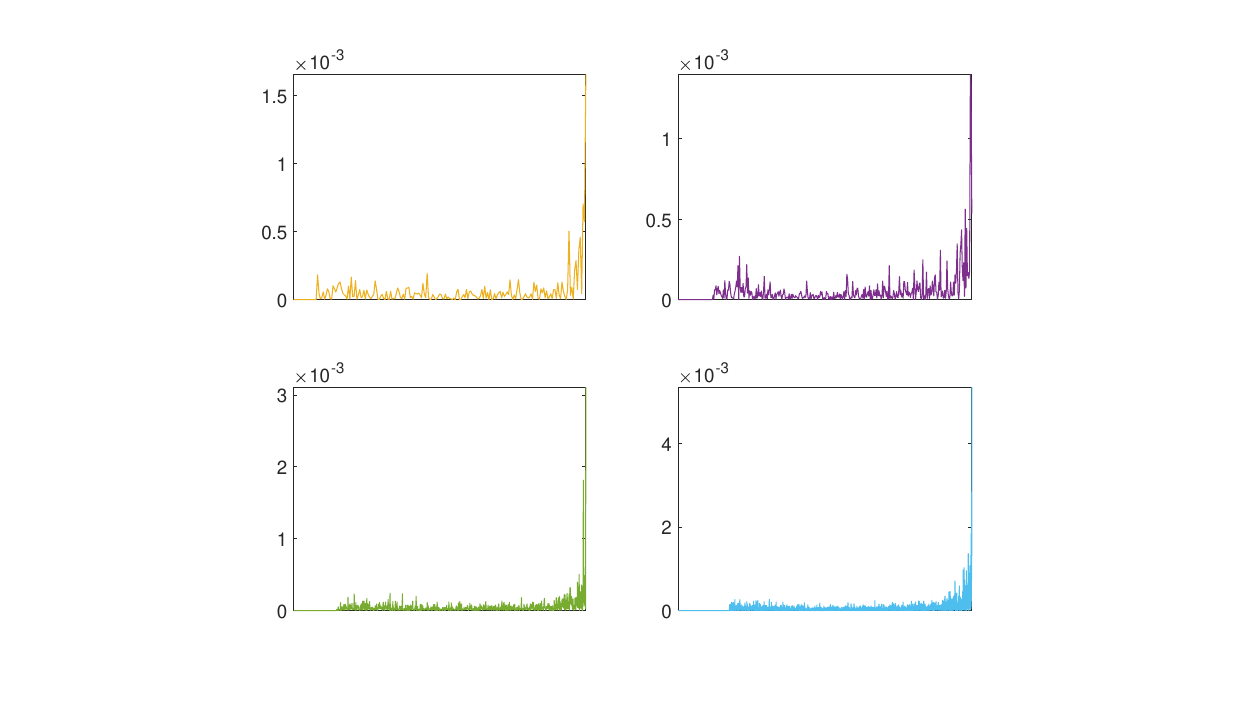} \vskip -0.5cm
  \caption{Minimal distance of eigenvalues of $B_{n,t}(a)$ from $f(\theta_1,\theta_2,x,y)=[(2-2\cos \theta_1)+(2-2\cos \theta_2)]a(x,y)$ and $t=2$ for different $h$ values.}
\label{FDcoeff_Bnt_full_t2_errore}
\end{figure}
\begin{figure}
\centering
  \includegraphics[width=\textwidth]{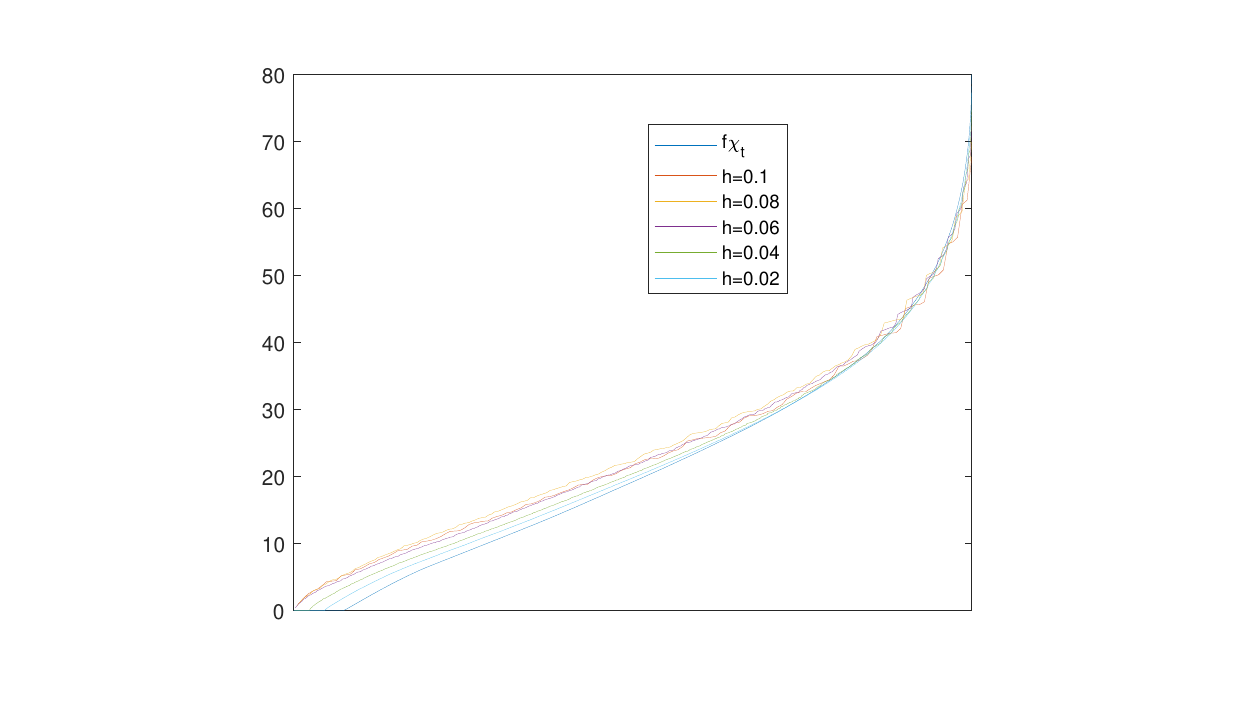} \vskip -0.5cm
  \caption{Eigenvalues distribution of $B_{n,t}(a)$ for different $h$ values  together with the sampling of $f(\theta_1,\theta_2,x,y)=[(2-2\cos \theta_1)+(2-2\cos \theta_2)]a(x,y)$ and $t=4$.}
\label{FDcoeff_Bnt_full_t4}
\end{figure}
\begin{figure}
\centering
  \includegraphics[width=\textwidth]{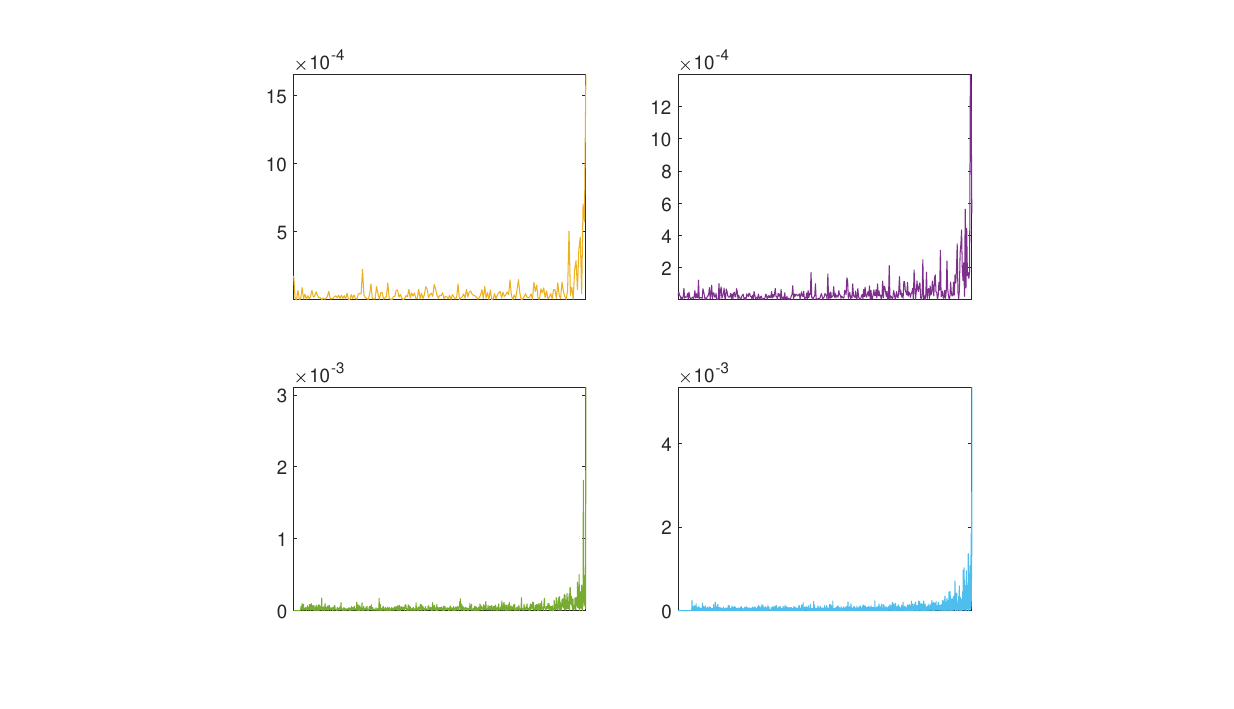} \vskip -0.5cm
  \caption{Minimal distance of eigenvalues of $B_{n,t}(a)$ from $f(\theta_1,\theta_2,x,y)=[(2-2\cos \theta_1)+(2-2\cos \theta_2)]a(x,y)$ and $t=4$ for different $h$ values.}
\label{FDcoeff_Bnt_full_t4_errore}
\end{figure}
\begin{figure}
\centering
  \includegraphics[width=\textwidth]{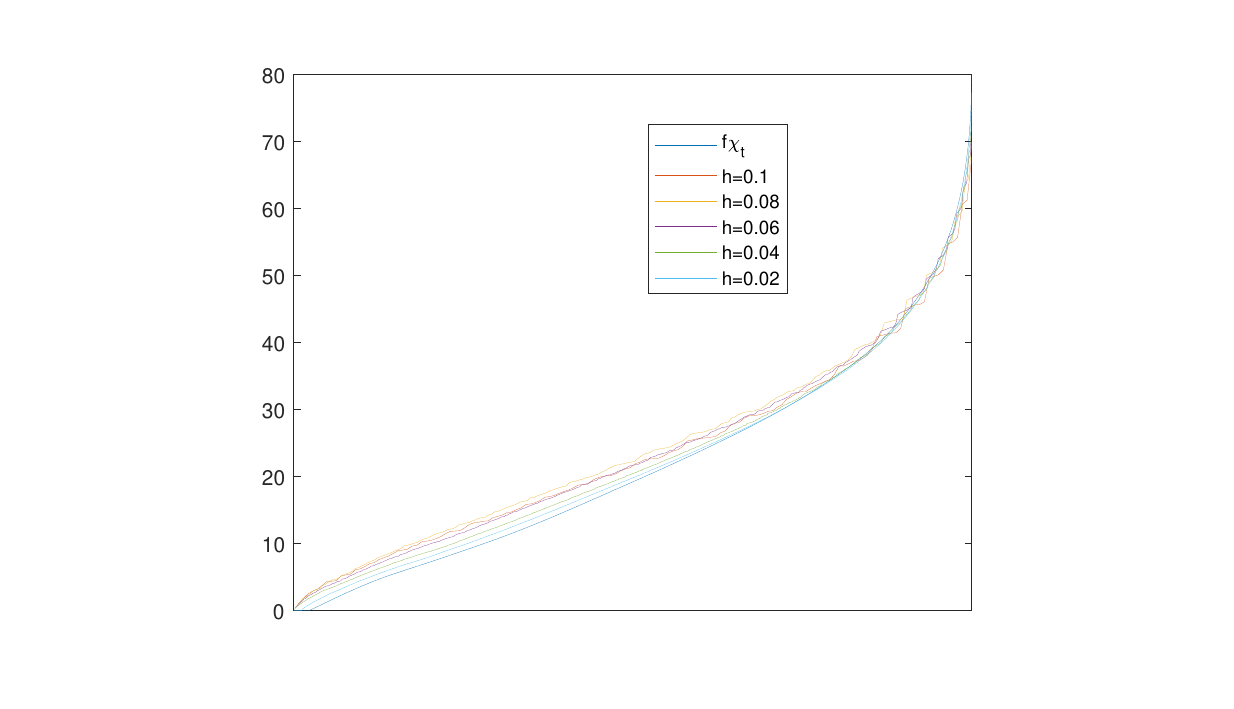} \vskip -0.5cm
  \caption{Eigenvalues distribution of $B_{n,t}(a)$ for different $h$ values  together with the sampling of$f(\theta_1,\theta_2,x,y)=[(2-2\cos \theta_1)+(2-2\cos \theta_2)]a(x,y)$ and $t=6$.}
\label{FDcoeff_Bnt_full_t6}
\end{figure}
\begin{figure}
\centering
  \includegraphics[width=\textwidth]{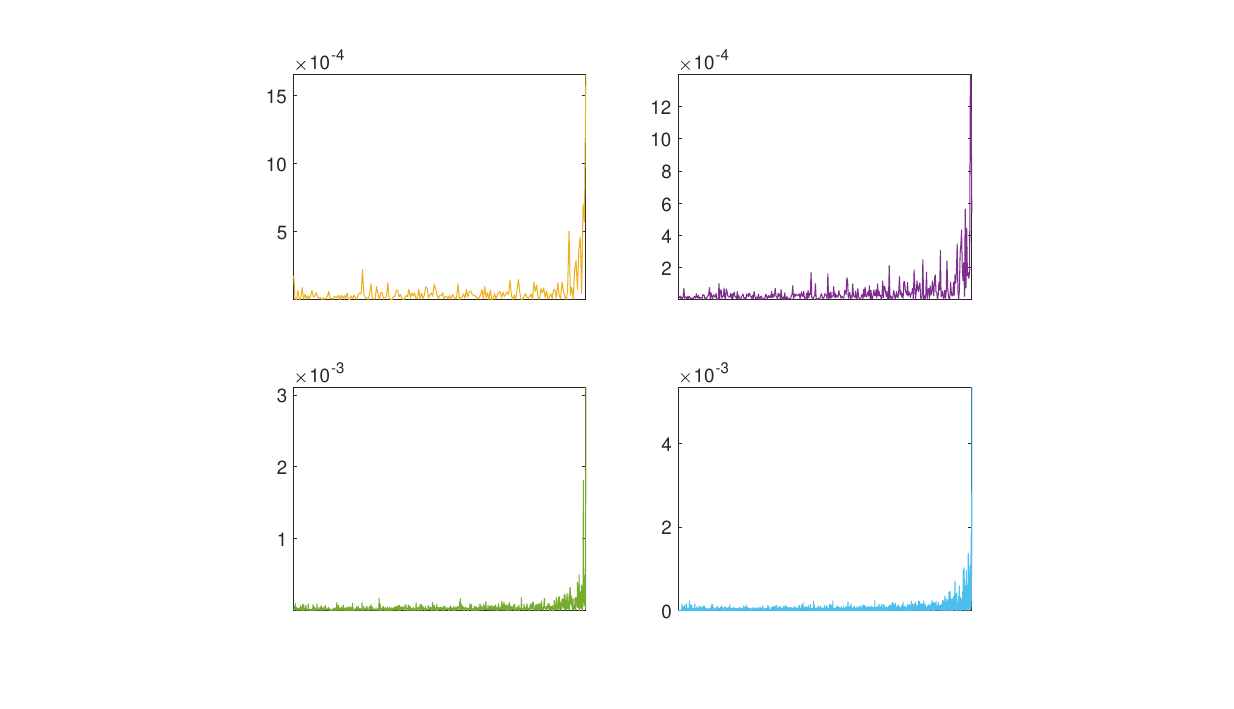} \vskip -0.5cm
  \caption{Minimal distance of eigenvalues of $B_{n,t}(a)$ from $f(\theta_1,\theta_2,x,y)=[(2-2\cos \theta_1)+(2-2\cos \theta_2)]a(x,y)$ and $t=6$ for different $h$ values.}
\label{FDcoeff_Bnt_full_t6_errore}
\end{figure}
\begin{figure}
\centering
  \includegraphics[width=\textwidth]{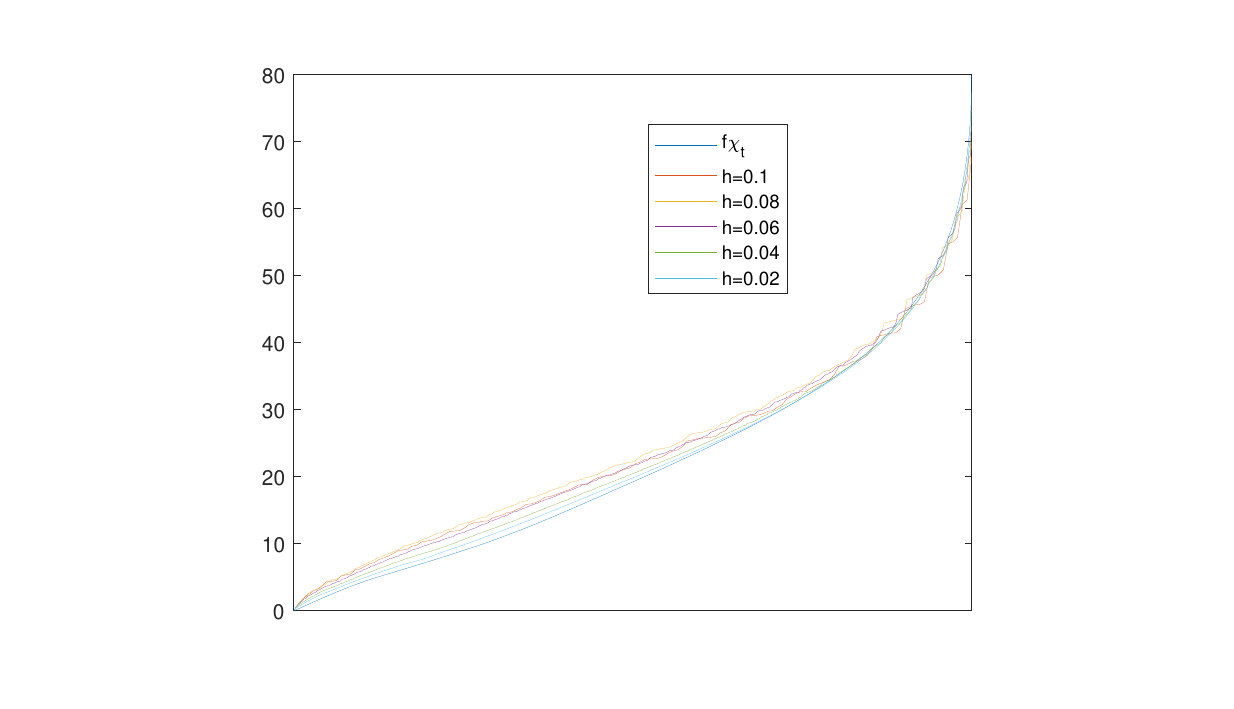} \vskip -0.5cm
  \caption{Eigenvalues distribution of $B_{n,t}(a)$ for different $h$ values  together with the sampling of $f(\theta_1,\theta_2,x,y)=[(2-2\cos \theta_1)+(2-2\cos \theta_2)]a(x,y)$ and $t=8$.}
\label{FDcoeff_Bnt_full_t8}
\end{figure}
\begin{figure}
\centering
  \includegraphics[width=\textwidth]{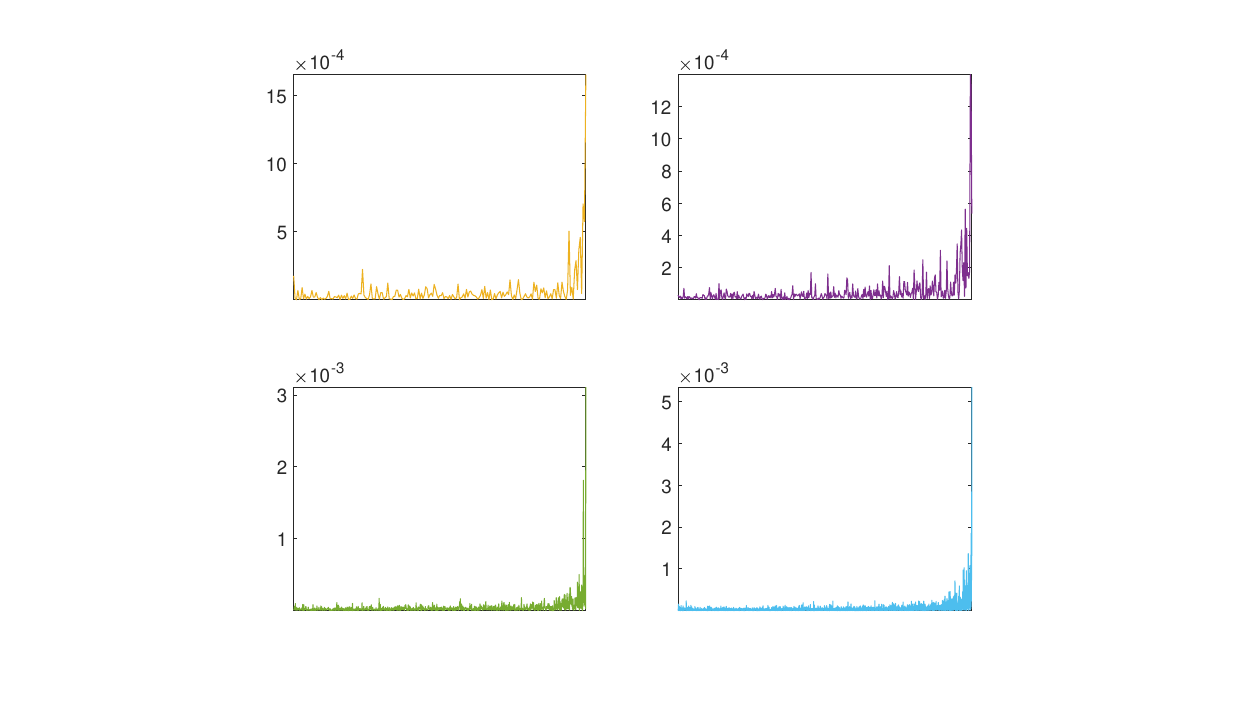} \vskip -0.5cm
  \caption{Minimal distance of eigenvalues of $B_{n,t}(a)$ from $f(\theta_1,\theta_2,x,y)=[(2-2\cos \theta_1)+(2-2\cos \theta_2)]a(x,y)$ and $t=8$ for different $h$ values.}
\label{FDcoeff_Bnt_full_t8_errore}
\end{figure}


\section{Conclusions}\label{sec:end}

We considered matrix-sequences $\{B_{n,t}\}_n$ of increasing sizes depending on $n$ and equipped with a parameter $t>0$. For every fixed $t>0$, we assumed that each $\{B_{n,t}\}_n$ possesses a canonical spectral/singular values symbol $f_t$ defined on $D_t\subset \R^{d}$ of finite measure, $d\ge 1$. Furthermore we assumed that $ \{ \{ B_{n,t}\} : \, t > 0 \} $ is an a.c.s. for $ \{ A_n \} $ and that $ \bigcup_{t > 0} D_t = D $ with $ D_{t + 1} \supset D_t $. Under such assumptions and via the a.c.s. notion, we proved general distribution results on the canonical distributions of $ \{ A_n \} $, whose symbol, when it exists, can be defined on the possibly unbounded domain $D$ of finite or even infinite measure. In a second theoretical part, we concentrated out attention on the case of unbounded domains of finite measure and we introduce a new concept, the g.a.c.s., which is suited particularly when moving or unbounded domains have to be treated.

Beside the notions of a.c.s and g.a.c.s, the main tool in concrete applications is the theory of GLT matrix-sequences to which usually all the basic matrix-sequences $\{B_{n,t}\}_n$ belong for every $t>0$, as shown in the examples stemming from the numerical approximation of PDEs/FDEs with either moving or unbounded domains.
Several numerical evidences have been given in order to corroborate the analysis.

As open questions we can mention the following main items:
\begin{enumerate}
\item In the present work we focused on the case of unbounded domains with finite measure. However, as already mentioned, the problem of dealing with discretizations on domains of infinite measure remains interesting, both from the point of view of approximation and from the distributional point of view. In this context, also using the simpler a.c.s. notion, there is probably not a distributional symbol for the sequence as in the classical sense, but rather a limit operator $\alpha $ which exploits some features of the discretized operator.
\item The a.c.s. notion has been used as one of the main tools at the foundation of the GLT theory. It is possible that a similar role could be played by the notion of g.a.c.s. for the construction of a new large class of sequences. This will be subject of further study in the future.
\item The study of the eigenvalue distribution in non-normal cases in which the matrix-sequence cannot be viewed as a small perturbation of a Hermitian matrix-sequence is still very intricate (see \cite{Alec1,Alec2,Eric} and the use of potential theory in \cite{saff-book} and references therein). This type of research is very challenging/difficult and it is worth to be considered.
\end{enumerate}

\section*{Declarations}

\noindent
\textbf{Conflict of Interests} The authors declare that they have no conflict of interest.\\
\textbf{Contribution} The authors have all contributed equally to the present work.\\

\section*{Acknowledgments}
\noindent
The research of  all the authors was partly supported by the Italian INdAM-GNCS agency. Furthermore, the work of Stefano Serra-Capizzano was funded from the European High-Performance Computing Joint Undertaking  (JU) under grant agreement No 955701. The JU receives support from the European Union’s Horizon 2020 research and innovation programme and Belgium, France, Germany, Switzerland. Furthermore Stefano Serra-Capizzano is grateful for the support of the Laboratory of Theory, Economics and Systems – Department of Computer Science at Athens University of Economics and Business.

{}

\end{document}